\newcommand{\KL}{\mathrm{KL}}
\crefname{enumi}{Step}{Steps}
\providecommand{\ent}{\mathbf{Ent}}
\definecolor{cb-salmon-pink}{RGB}{255, 182, 119}
\definecolor{ref-color}{RGB}{200, 0, 200}
\newcommandx{\unsure}[2][1=]{\todo[linecolor=red,backgroundcolor=red!25,bordercolor=red,#1]{#2}}
\newcommandx{\change}[2][1=]{\todo[linecolor=blue,backgroundcolor=blue!25,bordercolor=blue,#1]{#2}}
\newcommandx{\info}[2][1=]{\todo[linecolor=OliveGreen,backgroundcolor=OliveGreen!25,bordercolor=OliveGreen,#1]{#2}}
\newcommandx{\improvement}[2][1=]{\todo[linecolor=Plum,backgroundcolor=Plum!25,bordercolor=Plum,#1]{#2}}
\newcommandx{\thiswillnotshow}[2][1=]{\todo[disable,#1]{#2}}
\title{Sampling and Identity-Testing Without\\Approximate Tensorization of Entropy}
 \author{
 William Gay\thanks{Carnegie Mellon University. \texttt{wgay@andrew.cmu.edu}}
\and
 William He\thanks{Carnegie Mellon University. \texttt{wrhe@cs.cmu.edu}}
 \and
 Nicholas Kocurek\thanks{Carnegie Mellon University. \texttt{nkocurek@andrew.cmu.edu}}
 \and
 Ryan O'Donnell\thanks{Carnegie Mellon University. \texttt{odonnell@cs.cmu.edu}. Supported in part by a grant from Google Quantum AI.}}
\date{\small\today}
\begin{document}
\maketitle
\allowdisplaybreaks
\begin{abstract}
    Certain tasks in high-dimensional statistics become easier when the underlying distribution satisfies a local-to-global property called \textit{approximate tensorization of entropy} (ATE). For example, the Glauber dynamics Markov chain of an ATE distribution mixes fast and can produce approximate samples in a small amount of time, since such a distribution satisfies a \emph{modified log-Sobolev inequality}. Moreover, identity-testing for an ATE distribution requires few samples if the tester is given \textit{coordinate conditional} access to the unknown distribution, as shown by Blanca, Chen, Štefankovič, and Vigoda~\cite{blanca2023complexity}.

    A natural class of distributions that do \textit{not} satisfy ATE consists of \emph{mixtures} of (few) distributions that do satisfy ATE. We study the complexity of identity-testing and sampling for these distributions. Our main results are the following:
    \begin{enumerate}
        \item  We show fast mixing of Glauber dynamics from a \textit{data-based 
        initialization}, with optimal sample complexity, for mixtures of distributions satisfying modified log-Sobolev inequalities. This extends work of Huang, Koehler, Lee, Mohanty, Rajaraman, Vuong, and Wu~\cite{KoehlerLV24,HuangMRW24} for mixtures of distributions satisfying Poincaré inequalities.
        
        \item Answering an open question posed by Blanca et al., we give efficient identity-testers for mixtures of ATE distributions in the coordinate-conditional sampling access model. We also give some simplifications and improvements to the original algorithm of Blanca et al.
    \end{enumerate}
\end{abstract}

\newpage
\section{Introduction}
\subsection{Approximate Tensorization of Entropy}
Let $\mu$ be a distribution on the discrete product set $\Sigma^n$. If $f$ is a (non-negative) real-valued function on $\Sigma^n$, then the following functional captures the amount of local variation that $f$ has, where ``local'' means with respect to varying a single component of $\Sigma^n$:
\begin{definition}
    We write $\calL_\mu$ for the functional on functions $f:\Sigma^n\to \R_{\geq0}$ given by
    \begin{align*}
        \calL_\mu[f]=&\sum_{i\in[n]}\Ex_{\bx\sim \mu}\sbra{\ent_{\by\sim\mu|_{\bx_{\setminus i}}}\sbra{f(\by)}}.
    \end{align*}
\end{definition}
\noindent Here $\ent_\mu[\cdot]$ is the standard entropy functional with respect to $\mu$. See \Cref{sec:chain rule} for a full definition. 

Every product distribution $\mu$ on $\Sigma^n$ satisfies \textit{tensorization of entropy}, meaning $\calL_\mu[f]\geq \ent_\mu[f]$ for all~$f$. 
Much work in the context of Markov chain mixing is focused on establishing an approximate version of this inequality for distributions of interest:
\begin{definition}\label{def:ATE}
    A distribution $\mu$ on $\Sigma^n$ satisfies \emph{approximate tensorization of entropy (ATE)} with constant~$c^*$ if for all $f:\Sigma^n\to\R_{\geq0}$,
    \begin{align*}
        \ent_\mu[f] &\leq c^*\cdot  \calL_\mu[f].
    \end{align*}
    Note that $c^*\geq 1$ unless $\mu$ is a point mass (in which case both sides are always $0$).
\end{definition}
A motivation for \Cref{def:ATE} is the fact that many statistical tasks related to $\mu$ become easier when $\mu$ satisfies $c^*$-ATE with a small $c^*$ (close to $1$).

\paragraph{Sampling.} One such statistical task is that of approximately sampling from $\mu$, given oracle access to $\mu$ up to proportionality. When $\mu$ satisfies $c^*$-ATE, then it is well-known that the Glauber dynamics Markov chain for $\mu$ mixes in time $\widetilde{O}(c^*n)$, and Markov chain Monte Carlo techniques are very effective for drawing approximate samples from $\mu$. Thus, establishing ATE is instrumental for obtaining optimal mixing times for Glauber dynamic chains for natural distributions on high-dimensional spaces, such as Gibbs distributions of certain spin systems at high temperatures. See, for example, \cite{bobkov2006modified,caputo2015approximate,anari2021entropic,chen2021optimal,chen2022localization,blanca2022mixing,hermon2023modified,chen2023algorithms,chen2024fast}. 

\paragraph{Identity-Testing.} Another statistical task where ATE helps significantly is identity-testing, also known as hypothesis-testing or goodness-of-fit testing. In this setting, a testing algorithm is given some distance measure $D$ between distributions, a threshold parameter $\epsilon$, access to some kind of description of a known (``visible'') distribution~$\mu$, and some kind of sample access to an unknown distribution~$\pi$. The tester must then satisfy the following performance guarantees:
\begin{enumerate}
    \item If $\pi=\mu$ then the tester accepts $\pi$ with high probability.
    \item If $\infdiv{}{\pi}{\mu}\geq\epsilon$ then the tester rejects $\pi$ with high probability.
\end{enumerate}
The study of the complexity of this problem has a long history in statistics \cite{pearson1900x,fisher1966design}. This is a fundamental problem in science, where one wants to confirm that the behavior of some system conforms to that of a purported model for that system. 

Motivated by previous works on identity-testing with alternative access models (see \Cref{sec:related work testing} for a more detailed account of the literature), Blanca, Chen, Stefankovic, and Vigoda \cite{blanca2023complexity} studied the model of coordinate conditional access, which is a common relaxation of subcube conditioning access and pairwise conditional access when $\Sigma$ is binary. In this access model, the tester gets access to samples from the unknown distribution~$\pi$, and also gets access to $\pi|_{x_{\setminus i}}$ for any $x\in\Sigma^n$. Here $x_{\setminus i}=\{y\in\Sigma^n:|x-y|_0\leq1\}$. Blanca et al.\ proved that distributions $\mu$ satisfying ATE admit very efficient identity testers.

More precisely, let the ``General Oracle'' be an oracle that, when queried, outputs a sample drawn from the unknown distribution $\pi$, and let the ``Coordinate Oracle'' be the oracle that, when queried with a pair $(x,i)$, outputs a sample from $\pi|_{x_{\setminus i}}$. 
\begin{theorem}[\cite{blanca2023complexity}, Theorem 4.1]\label{thm:BCSV}
    Let $\mu$ be a distribution on $\Sigma^n$ that is $\eta$-balanced (see \Cref{def:balance}), fully supported, and ATE with constant $c^*$. Assuming 
    \[
        c^* \leq {\poly(n)}, \quad \eta \geq \exp(-\poly(n)),
    \]
    there is a testing algorithm for $\mu$ with access to the Coordinate Oracle and General Oracle having 
    \[
        \text{sample complexity} \leq 
        O\pbra{\frac{c^*n}{\epsilon}}\cdot\log^3\pbra{\frac{n}{\epsilon}}\cdot f(\eta), \quad \text{where } f(\eta) = 
        \begin{cases}
            \log(1/\eta) &\text{if $|\Sigma| = 2$}, \\
            \frac1{\sqrt{\eta}} &\text{if $|\Sigma| \geq 3$}.
        \end{cases}
    \]
    Also, in the $|\Sigma| \geq 3$ case one can reduce the dependence on $1/\eta$ back to logarithmic at the expense of being  quadratic in the other parameters; precisely, one can also achieve
    \[
        \text{sample complexity} \leq 
        O\pbra{\frac{c^*n}{\epsilon}}^2 \cdot\log^2\pbra{\frac{n}{\epsilon}}\cdot \sqrt{|\Sigma|} \cdot \log(1/\eta).
    \]
\end{theorem}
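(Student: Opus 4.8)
The plan is to combine the ATE inequality of \Cref{def:ATE}, applied to the likelihood ratio $f=\pi/\mu$, with an ``aggregated'' chi-squared/Hellinger identity tester run on the one-coordinate conditional distributions of $\pi$. \textbf{Step 1: Structural reduction via ATE.} Since $\mu$ is fully supported, set $f=\pi/\mu$; a direct computation with the definition of $\ent$ gives $\ent_\mu[f]=\KL(\pi\,\|\,\mu)$, while
\[
\calL_\mu[f]\ =\ \sum_{i\in[n]}\Ex_{\bx\sim\pi}\sbra{\KL\pbra{\pi|_{\bx_{\setminus i}}\ \big\|\ \mu|_{\bx_{\setminus i}}}},
\]
because $\ent_{\mu|_{x_{\setminus i}}}[f]$ equals $\tfrac{\pi(x_{\setminus i})}{\mu(x_{\setminus i})}\cdot\KL\pbra{\pi|_{x_{\setminus i}}\,\big\|\,\mu|_{x_{\setminus i}}}$, and the factor $\tfrac{\pi(x_{\setminus i})}{\mu(x_{\setminus i})}$ turns the outer $\bx\sim\mu$ expectation into a $\bx\sim\pi$ expectation. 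Hence $c^*$-ATE yields
\[
\frac1n\sum_{i\in[n]}\Ex_{\bx\sim\pi}\sbra{\KL\pbra{\pi|_{\bx_{\setminus i}}\ \big\|\ \mu|_{\bx_{\setminus i}}}}\ \geq\ \frac{\KL(\pi\,\|\,\mu)}{c^*n},
\]
which is $\Omega(\epsilon/(c^*n))$ once $\infdiv{}{\pi}{\mu}\geq\epsilon$ --- immediately if $\infdiv{}{\pi}{\mu}$ is KL divergence, and otherwise after a Pinsker-type or balance-based comparison with $\KL(\pi\,\|\,\mu)$. Using that the conditionals of an $\eta$-balanced $\mu$ are again $\eta$-balanced and that $\KL(p\,\|\,q)\leq O(\log(1/\eta))\cdot H^2(p,q)$ whenever $\min_a q(a)\geq\eta$, I would then pass to squared Hellinger distance: for $\bx\sim\pi$ and a uniformly random coordinate $i$, the local discrepancy $H^2\pbra{\pi|_{\bx_{\setminus i}},\mu|_{\bx_{\setminus i}}}$ has expectation $0$ if $\pi=\mu$, and at least $\delta:=\Omega\pbra{\epsilon/(c^*n\log(1/\eta))}$ if $\pi$ is $\epsilon$-far from $\mu$.

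\textbf{Step 2: An aggregated local tester.} A General-Oracle draw $\bx\sim\pi$ together with a uniformly random coordinate defines a ``context'' with an unknown law $\nu$; for that context we obtain one free sample from the corresponding conditional of $\pi$ and, via the Coordinate Oracle, arbitrarily many more, whereas the matching conditional of $\mu$ is known in closed form. The two ``joint'' distributions --- draw a context from $\nu$, then a sample from the $\pi$-conditional, versus from the $\mu$-conditional --- have the same context marginal, so they are equal iff $\pi=\mu$ (full support is what makes agreement of all one-coordinate conditionals force $\pi=\mu$), and their squared Hellinger distance is exactly the expected local discrepancy bounded in Step 1. I would now run an identity test with a Hellinger/chi-squared threshold $\delta$ against this ``semi-known'' joint distribution: draw $O(1/\delta)$ contexts, pull $O(\sqrt{|\Sigma|})$ conditional samples per context (or $O(1/\sqrt\eta)$, using that $\eta$-balance forces $|\Sigma|\leq1/\eta$), form the standard modified-chi-squared statistic (Valiant--Valiant / Acharya--Daskalakis--Kamath style) against the known $\mu$-conditional, and average over contexts. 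The shared context marginal is exactly what lets the cost scale with the \emph{local} alphabet size $|\Sigma|$ and with $\eta$ rather than with the astronomically large space of contexts.

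\textbf{Step 3: Accounting, amplification, and the two regimes.} Completeness holds because the statistic has the correct null mean and $O(1)$ variance when $\pi=\mu$; soundness is the Step-1 lower bound on the expected local discrepancy. Median amplification together with a union bound over the $O(\log(n/\epsilon))$ internal estimates produces the quoted $\log^2$--$\log^3(n/\epsilon)$ factors. In the binary case $\sqrt{|\Sigma|}=O(1)$ and the only $\eta$-loss is the KL-to-Hellinger conversion, giving $f(\eta)=\log(1/\eta)$; for $|\Sigma|\geq3$ one gets $f(\eta)=1/\sqrt\eta$ by bounding $\sqrt{|\Sigma|}$ that way (with the logarithmic loss re-routed or absorbed). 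The alternative bound --- quadratic in $c^*n/\epsilon$ but carrying the exact $\sqrt{|\Sigma|}$ --- comes from replacing the Hellinger-linear tester by a coarser ``learn each conditional, then test it'' argument (or a per-coordinate union bound), which squares the dependence on the discrepancy threshold but avoids the $1/\sqrt\eta$ slack.

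\textbf{Main obstacle.} The delicate part is Step 2: making the aggregation over the \emph{unknown} context law $\nu$ go through without paying for the size of the context space. This requires controlling the variance of the aggregated chi-squared statistic using only the known $\mu$-conditionals and the balance parameter $\eta$ --- an ``averaged'' version of the classical identity-testing variance bound --- and it requires the Step-1 comparisons ($\infdiv{}{\pi}{\mu}\to\KL(\pi\,\|\,\mu)$ and $\KL\to H^2$) to be tight enough to keep the final bound linear in $1/\epsilon$ and only $f(\eta)$ in the balance parameter.
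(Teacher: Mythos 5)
This theorem is a citation from \cite{blanca2023complexity}; the present paper does not re-prove it. The right point of comparison is \Cref{alg:identity test with coordinate oracle} (Step~\ref{local reject}) together with \Cref{lem:conditional flip from 1-homogeneity,lem:local reject}, which the authors describe as essentially the BCSV algorithm specialized to $k=1$.

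Your Step~1 is correct and matches that approach. Rewriting $\calL_\mu[\pi/\mu]$ via $1$-homogeneity as $\sum_{i}\Ex_{\bx\sim\pi}\infdiv{\KL}{\pi|_{\bx_{\setminus i}}}{\mu|_{\bx_{\setminus i}}}$ is exactly \Cref{lem:conditional flip from 1-homogeneity}, and applying ATE to lower-bound this sum by $\infdiv{\KL}{\pi}{\mu}/c^*$ is the first move of \Cref{lem:local reject}.

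Your Step~2 departs from BCSV in a substantial way. You propose a single ``aggregated'' $\chi^2$/Hellinger statistic: draw $O(1/\delta)$ contexts $(\bx,\bi)$, pull $O(\sqrt{|\Sigma|})$ conditional samples per context, form a per-context estimator, and average. BCSV (and Step~\ref{local reject} here) instead draw $T_1=O(c^*n/\eps)$ contexts and run, for each one, a \emph{separate, fully amplified} local identity test against a \emph{randomized threshold} $\bm\theta\sim\mathrm{Unif}$; soundness then follows from a layer-cake/reverse-Markov bound ($\Pr[\bY\geq\bm\theta]\geq\Omega(\eps/(c^*n))$ in \Cref{lem:local reject}) plus a union bound over the $T_1$ trials. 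That randomized-threshold device is precisely what sidesteps the variance problem you identify as your ``main obstacle'': each trial is a self-contained constant-error test, so one never needs to bound the second moment of a statistic whose conditional variance depends on the unknown, context-dependent discrepancy $\chi^2(\pi|_{\bx_{\setminus i}}\,\|\,\mu|_{\bx_{\setminus i}})$. Your observation that the Bhattacharyya factorization makes the joint $H^2$ equal the expected local $H^2$ is correct, but the variance control for the aggregated statistic is not supplied, is non-routine, and is the crux of your alternative route; as written, this is a genuine gap.

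Step~3 also does not recover the quoted bounds. The $\log^3(n/\eps)$ factors are attributed only to unspecified ``median amplification''; for $|\Sigma|\geq3$ you bound $\sqrt{|\Sigma|}\leq1/\sqrt\eta$ but are left with a stray $\log(1/\eta)$ you hope to ``re-route,'' whereas BCSV attain $f(\eta)=1/\sqrt\eta$ by switching to a different local tester of cost $O(1/(\sqrt\eta\,\eps))$ that carries no $\sqrt{|\Sigma|}$ or $\log(1/\eta)$ factor at all; and the quadratic alternative bound comes from BCSV's $O(\sqrt d\,\log(1/\eta)/\eps^2)$ local tester, not from a learn-then-test reduction.
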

\subsection{Mixtures of ATE Distributions and Our Results}
As described, certain statistical tasks become much easier when the related distribution $\mu$ satisfies approximate tensorization of entropy. This leads us to consider cases in which this property fails. A very natural class of distributions that do not satisfy ATE is the class of mixtures of distributions that \textit{do} satisfy ATE. We consider distributions $\mu$ of the form $\sum_{a\in[k]}\rho(a) \mu_a$, where $\rho$ is a distribution on $[k]$ and each $\mu_a$ satisfies $c^*$-ATE. We call the $\mu_a$'s the \textit{mixture components}.

Our first result concerns the sampling task. It is easy to see that certain mixtures of ATE distributions have exponentially large mixing times for Glauber dynamics (for example the equal mixture of the $0.1$- and $0.9$-biased distributions on $\{0,1\}^n$). However, this is a lower bound on the mixing time from a \textit{worst-case initialization}. We instead show that when the chain is initialized with a \textit{data-based initialization}, it still experiences fast mixing, given that there is enough data for the initialization:
\begin{theorem}
    \label{thm:sampling informal}
    Let $\mu = \sum_{a=1}^k \rho(a)\mu_a$ be a mixture on $\Sigma^n$ with each component satisfying $c^*$-ATE (or, more weakly, satisfying $\tfrac{1}{c^*n}$-MLSI; see \Cref{def:Phi-Sobolev} and \Cref{rem:ATE to MLSI}). Let $\bm\pi$ be an empirical distribution induced by
    \begin{equation}
        m = O(k/\varepsilon + \log(1/\delta)/\varepsilon)
    \end{equation}
    independent samples from $\mu$. Then with probability at least $1-\delta$ over these samples, the Glauber dynamics for $\mu$ warm-started at $\bm\pi$ mixes to $\infdiv{\KL}{P^t\boldsymbol{\pi}}{\mu} \leq \varepsilon$
    in continuous-time $t = c^*n \cdot O\pbra{\log \log (1/\min_x\mu(x)) + \log (1/\varepsilon)}$.
\end{theorem}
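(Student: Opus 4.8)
The plan is to combine a standard entropy-decay argument for each mixture component with a two-phase analysis of the Glauber dynamics from the empirical warm start. Write $\bm\pi = \sum_{a=1}^k \bm\pi(a)\,\bm\pi_a$ for the decomposition of the empirical distribution according to which component each sample came from (so $\bm\pi_a$ is the empirical distribution of the samples landing in component $a$, and $\bm\pi(a)$ is the empirical mass of component $a$). The key structural fact is that $\mathrm{KL}(\bm\pi \,\|\, \mu)$ decomposes (up to the chain rule for KL divergence over the latent label) into $\mathrm{KL}(\bm\pi(\cdot)\,\|\,\rho)$ plus an average over $a$ of $\mathrm{KL}(\bm\pi_a\,\|\,\mu_a)$. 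However, the empirical $\bm\pi_a$ need not be close to $\mu_a$ in KL when the number of samples in component $a$ is small; the right notion is a \emph{truncated} or \emph{chi-squared-regularized} surrogate. This is exactly the phenomenon handled in \cite{KoehlerLV24,HuangMRW24}, and I would first port their key lemma: with $m = O(k/\varepsilon + \log(1/\delta)/\varepsilon)$ samples, with probability $1-\delta$ the empirical distribution $\bm\pi$ satisfies a ``one-step'' bound showing that $P^{t_0}\bm\pi$ has bounded KL to $\mu$ after a short burn-in $t_0$, where the bound is $O(\log\log(1/\min_x\mu(x)))$ rather than the naive $O(\log(1/\min_x\mu(x)))$. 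The doubly-logarithmic gain comes from the fact that the empirical measure already places nonnegligible mass in the right regions, so only the ``deep tails'' of $\mu$ need to be filled in, and the dynamics fills a region of mass $p$ to within constant factors in time $O(c^*n\log(1/p))$, while the empirical start guarantees the relevant $p$ is only exponentially small rather than the $\min_x\mu(x)$ floor.

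For the second phase, once $\mathrm{KL}(P^{t_0}\bm\pi \,\|\, \mu) = O(1)$, I would run the contraction estimate for the mixture. The continuous-time Glauber dynamics generator $\calL$ for $\mu$ does \emph{not} satisfy a global MLSI (that would contradict the exponential worst-case lower bound), but it does satisfy the block-factorization / MLSI \emph{conditioned on the latent label being fixed}: restricted to configurations drawn from a single $\mu_a$, the dynamics contracts entropy at rate $\Omega(1/(c^* n))$ by the $\tfrac{1}{c^*n}$-MLSI hypothesis (\Cref{def:Phi-Sobolev}, \Cref{rem:ATE to MLSI}). The standard trick is to decompose $\ent_\mu[f] \le \ent_\rho[\Ex_{\mu_a}[f]] + \Ex_{a\sim\rho}[\ent_{\mu_a}[f]]$; the second term is controlled by the per-component MLSI and hence by $c^* n\cdot \calL_\mu[f]$ (Dirichlet form domination, since the Glauber Dirichlet form for $\mu$ dominates $\rho$-average of the per-component Dirichlet forms), while the first term — the ``across-components'' entropy — does \emph{not} decay under Glauber dynamics in general. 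The resolution, following the data-initialization philosophy, is that the warm start already has the across-components entropy small: $\mathrm{KL}(\bm\pi(\cdot)\,\|\,\rho) \le O(\varepsilon)$ once $m \gtrsim k/\varepsilon$ by a standard empirical-distribution-on-$[k]$ estimate (e.g. the bound $\Ex[\mathrm{KL}(\text{empirical}\,\|\,\rho)] = O(k/m)$ for $m$ samples), and — crucially — the Glauber dynamics for the mixture $\mu$ \emph{preserves the latent marginal} in the sense that it never moves mass across components faster than... actually the cleanest statement is that the across-component entropy is non-increasing (it is a projection argument: the pushforward of $P^t$ onto the latent label is a contraction towards $\rho$, in fact the identity if the components have disjoint-ish supports, and in general monotone). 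So across-component entropy stays $O(\varepsilon)$, and the within-component entropy decays at rate $\Omega(1/(c^*n))$, giving $\mathrm{KL}(P^t\bm\pi\,\|\,\mu)\le\varepsilon$ after an additional $t = O(c^* n \log(1/\varepsilon))$.

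Concretely the steps are: (1) set up the latent-variable decomposition of $\bm\pi$ and of $\mathrm{KL}(\cdot\,\|\,\mu)$ into across- and within-component parts; (2) prove the empirical concentration: with $m = O(k/\varepsilon + \log(1/\delta)/\varepsilon)$ samples, with probability $1-\delta$, the across-component part is $\le \varepsilon$ and each within-component empirical $\bm\pi_a$ is ``not too singular'' relative to $\mu_a$ — quantitatively, $\max_x \bm\pi_a(x)/\mu_a(x)$ or the relevant truncation level is at most $\exp(\poly(n))$, or more to the point the relevant region of $\mu_a$ that $\bm\pi_a$ misses has mass at most a constant; (3) burn-in phase: show $P^{t_0}\bm\pi$ has within-component KL $O(1)$ for $t_0 = c^* n\cdot O(\log\log(1/\min_x\mu(x)))$, using per-component MLSI plus the fact that $\bm\pi_a$ is a ``$1/\poly$-warm-ish'' start for $\mu_a$ so only the doubly-log term is needed (this is the heart of the argument and where I'd lean on \cite{KoehlerLV24,HuangMRW24}-style estimates); (4) decay phase: per-component MLSI gives within-component entropy decay $e^{-\Omega(t/(c^*n))}$, across-component entropy stays $\le\varepsilon$, sum to conclude. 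The main obstacle I anticipate is Step (3): making rigorous the claim that the empirical start is warm enough to replace $\log(1/\min_x\mu(x))$ by its logarithm. This requires a careful argument that the ``missing mass'' of $\bm\pi_a$ in $\mu_a$ — the mass on atoms sampled zero times — is with high probability bounded away from $1$ (a Good–Turing-type or negative-association estimate), combined with the fact that $c^*$-ATE/MLSI implies the dynamics equilibrates a conditional distribution supported on a set of $\mu_a$-mass $\ge p$ in time $O(c^* n\log(1/p))$, and here $p$ is only $\exp(-\poly(n))$-small by the balance/full-support hypothesis, so $\log(1/p) = \poly(n)$ — wait, that gives $\poly(n)$ not $\log\log$; the genuine doubly-log bound instead comes from iterating: after time $O(c^*n)$ the KL drops from $\log(1/\min_x\mu(x))$ to a constant times $\log\log(1/\min_x\mu(x))$ via a comparison with a hypercontractivity/regularization step, and this is the delicate part that must be imported essentially verbatim from the prior work and checked to go through under MLSI (entropy) rather than Poincaré (variance).
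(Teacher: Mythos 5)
Your high-level plan (chain rule decomposition into inter- and intra-component parts, per-component MLSI for the intra part, empirical concentration for the inter part) is the right skeleton, but three of your central mechanisms are different from the paper's and at least one is a genuine error.

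\textbf{The doubly-logarithmic mixing time is not a warm-start phenomenon.} You spend the bulk of the proposal worrying about how to turn $\log(1/\min_x\mu(x))$ into its logarithm via Good--Turing missing-mass estimates or hypercontractive regularization, concluding that this is the ``delicate part.'' It isn't: for \emph{any} initial distribution $\pi$ one has $\infdiv{\KL}{\pi}{\mu}\le\log(1/\min_x\mu(x))$, and the paper's key lemma (\Cref{lem:maintrick}) shows $\infdiv{\Phi}{P_t\pi}{\mu}\le(1-1/c^*n)^t\infdiv{\Phi}{\pi}{\mu}+\Ex_{\bs}[\ent_{\ba\sim\rho}^\Phi[\Ex_{P_{\bs}\pi}[\mu_{\ba}/\mu]]]$, a weak $\Phi$-Sobolev inequality that generalizes Theorem~4.5 of \cite{HuangMRW24}. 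Exponential decay of the first term starting from the trivial bound gives $t=c^*n\cdot O(\log\log(1/\min_x\mu)+\log(1/\varepsilon))$ with no missing-mass argument at all. Your ``two-phase burn-in'' is therefore both unnecessary and not what the proof does.

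\textbf{Your monotonicity claim for the across-component entropy is unsupported and probably false.} You assert that ``the across-component entropy is non-increasing'' or that Glauber dynamics ``preserves the latent marginal,'' even hedging mid-sentence. If across-component entropy never increased along the dynamics, one could start from \emph{any} distribution with small inter-component entropy and conclude fast mixing, which contradicts the need for a data-based start. \Cref{lem:maintrick} instead produces a \emph{time-averaged} inter-component term $\Ex_{\bs}[\cdots]$ with $\bs\in[0,t]$, which the paper controls (\Cref{lem:tailboundtransfer}) not by monotonicity but by observing that since each sample $\bx_j\sim\mu$ and $\mu$ is stationary, the random variable $\rho_{P_s\bm\pi}$ has the same marginal law as $\rho_{\bm\pi}$ for each fixed $s$, and then applying Jensen to the mgf bound for the convex combination over $\bs$.

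\textbf{The latent-label decomposition of $\bm\pi$ does not exist when components overlap, and the expectation bound is too weak.} Writing $\bm\pi=\sum_a\bm\pi(a)\bm\pi_a$ by ``which component each sample came from'' presupposes the labels are observable; the paper instead works with the posterior $\rho_{\bx}(a)=\rho(a)\mu_a(\bx)/\mu(\bx)$ and reduces the overlapping case to the disjoint case by convexity of KL in its first argument (\Cref{lem:convergence of empirical distributions}). Finally, your invocation of ``$\E[\KL(\text{empirical}\|\rho)]=O(k/m)$'' only gives a tail via Markov, which would cost $1/\delta$ rather than $\log(1/\delta)$; the paper crucially uses Agrawal's \emph{mgf} bound (\Cref{thm:agrawal}) to get the additive $\log(1/\delta)/\varepsilon$ term while avoiding any dependence on the minimum mixing weight, a point the authors explicitly flag as the reason they eschew Chernoff.
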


Our next result concerns the identity-testing task. We show that with the same access model studied by \cite{blanca2023complexity}, we can efficiently identity-test any $\mu$ that is a mixture of few ATE distributions, answering the open question posed in that paper:

\begin{theorem}\label{thm:testing informal}
    Let $\mu=\sum_{a = 1}^k\rho(a)\mu_a$ be a mixture on $\Sigma^n$ with each component satisfying $c^*$-ATE. 
    Assume also that $\mu$ is $\eta$-balanced (see \Cref{def:balance}). 
    Then there is an identity-testing algorithm for $\mu$ that uses
    \begin{align}\label{eq:sample complexity}
         O\pbra{\frac{c^*n}{\epsilon}} \cdot \log^2\pbra{\frac{c^*n}{\epsilon}} \cdot \log(1/\eta)\cdot \log\log(1/\eta)\cdot \sqrt{\abs{\Sigma}}+O\pbra{\frac{\sqrt{k}\cdot \log(1/\rho^*)}{\epsilon}}
    \end{align}
    calls to the General Oracle and Coordinate Oracle.
\end{theorem}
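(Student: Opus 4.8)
The plan is to reduce to identity-testing on an augmented product domain and from there to the single-component ATE setting, so that the only genuinely mixture-specific cost is the additive $O(\sqrt k\log(1/\rho^*)/\epsilon)$ term, where $\rho^*=\min_a\rho(a)$. For $x\in\Sigma^n$ write $\rho_x\in\Delta([k])$ for the posterior $\rho_x(a)=\rho(a)\mu_a(x)/\mu(x)$, which the tester can compute since the description of $\mu$ (hence of $\rho$ and each $\mu_a$ up to proportionality) is given. First I would introduce $\hat\mu(a,x)=\rho(a)\mu_a(x)$ and, for the unknown $\pi$, $\hat\pi(a,x)=\pi(x)\rho_x(a)$, both on $[k]\times\Sigma^n$. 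Since $\hat\pi(a,x)/\hat\mu(a,x)=\pi(x)/\mu(x)$ is independent of $a$, one gets $\mathrm{TV}(\hat\pi,\hat\mu)=\mathrm{TV}(\pi,\mu)$ (and the same for $\KL$ and for Hellinger), and both oracles for $\hat\pi$ are simulable from those for $\pi$: draw $x\sim\pi$ then $a\sim\rho_x$; resample the label coordinate for free from $\rho_x$; resample a coordinate $i\in[n]$ by a rejection step against one Coordinate-Oracle call on $\pi$.

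The analytic core is a tensorization-type inequality for the mixture. Applying the chain rule for entropy across the label coordinate, and then $c^*$-ATE of each component $\mu_a=\hat\mu(\cdot\mid a)$ to the density $\hat\pi(\cdot\mid a)/\mu_a$, gives
\[
 \infdiv{\KL}{\pi}{\mu}=\infdiv{\KL}{\hat\pi}{\hat\mu}\ \le\ \infdiv{\KL}{\hat\pi_0}{\rho}\ +\ c^*\sum_{i=1}^{n}\Ex_{(a,x)\sim\hat\pi}\!\left[\infdiv{\KL}{\hat\pi|_{(a,x)_{\setminus i}}}{\mu_a|_{x_{\setminus i}}}\right],
\]
where $\hat\pi_0$ is the label-marginal of $\hat\pi$ and each inner term compares two distributions on $\Sigma$, the second of which is known. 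The important point is that $\hat\mu$ itself is \emph{not} ATE (already for equal mixtures of distinctly biased product measures $\hat\mu$ fails ATE), so it is this inequality — not a blanket ATE bound for $\hat\mu$ — that must drive the argument; working with a Hellinger-distance form of it in place of $\KL$/Pinsker is what keeps the dependence on $1/\epsilon$ linear. Consequently, if $\mathrm{TV}(\pi,\mu)\ge\epsilon$ then at least one of (a) $\hat\pi_0$ is at scale $\mathrm{poly}(\epsilon/c^*)$ away from $\rho$, or (b) the coordinate-sum exceeds some $\tau=\tau(\epsilon,c^*)>0$, holds, while both fail when $\pi=\mu$.

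Part (a) is an identity/closeness test for the $[k]$-valued weights $\hat\pi_0$ (which we can sample) against the known $\rho$; since the relevant log-likelihood ratio is bounded by $\log(1/\rho^*)$, an appropriately designed tester uses $\widetilde O(\sqrt k\log(1/\rho^*)/\epsilon)$ samples, giving the second summand of \eqref{eq:sample complexity}. Part (b) is the crux. For a fixed $a$, $\sum_i\Ex_{x\sim\hat\pi(\cdot\mid a)}[\infdiv{\KL}{(\hat\pi(\cdot\mid a))|_{x_{\setminus i}}}{\mu_a|_{x_{\setminus i}}}]$ is exactly the local statistic estimated by (our improved version of) \Cref{thm:BCSV} for the $c^*$-ATE distribution $\mu_a$. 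The naive strategy — draw $a\sim\hat\pi_0$ and run that tester on $(\hat\pi(\cdot\mid a),\mu_a)$ — fails, because simulating a Coordinate-Oracle draw from $\hat\pi(\cdot\mid a)$ by rejection costs $\approx 1/\hat\pi(a\mid x_{\setminus i})$ calls, and averaging over $(a,x)\sim\hat\pi$ this is a multiplicative $\Theta(k)$. The resolution is to estimate the coordinate-sum \emph{without pinning the label}: by the identity
\[
 \Ex_{(a,x)\sim\hat\pi}\!\Big[\Ex_{z\sim\hat\pi|_{(a,x)_{\setminus i}}}\!\big[g(a,x_{\setminus i},z)\big]\Big]\ =\ \Ex_{(a,w)\sim\hat\pi}\!\big[g(a,w_{\setminus i},w_i)\big],
\]
any per-coordinate estimator built from a \emph{single} draw of each one-dimensional conditional — which is precisely how we design the statistic underlying our improved single-distribution tester — becomes computable from a General-Oracle draw $w\sim\pi$, its decoded label $a\sim\rho_w$, and (only to cut variance on the small alphabet $\Sigma$) one extra Coordinate-Oracle call, with no $k$-dependence. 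Feeding the improved single-distribution guarantee through this simulation and aggregating over the $n$ coordinates yields the first summand of \eqref{eq:sample complexity}, with $\sqrt{|\Sigma|}\,\log(1/\eta)\,\log\log(1/\eta)$ arising from how accurately the $\Sigma$-valued conditionals must be compared (a truncated-likelihood-ratio estimator that also improves on the $1/\sqrt\eta$ of \Cref{thm:BCSV}). The step I expect to be the main obstacle is exactly this: engineering the single-distribution statistic to be a one-draw-per-conditional functional — so the label-free reweighting goes through with no $\poly(k)$ blow-up — while keeping its variance low enough to preserve the $\widetilde O(c^*n/\epsilon)$ rate and tolerating the fact that the components $\mu_a$ inherit only weak balancedness from the $\eta$-balancedness of $\mu$ (which I would handle by comparing against mildly smoothed conditionals, at the cost of a constant factor).
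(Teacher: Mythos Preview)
Your Part~(a) --- testing the posterior label marginal $\hat\pi_0=\rho_\pi$ against $\rho$ --- is exactly the paper's \Cref{weight verification reject}, and your chain-rule decomposition is the same starting point as the paper's. The divergence is in Part~(b), and there the paper's route is both simpler and avoids the obstacle you yourself flag as the main one.

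The key lemma you are missing is \Cref{lem:local statistic to global statistic}: for $f=\pi/\mu$,
\[
   c^*\cdot \calL_\mu[f]\ \ge\ \Ex_{\ba\sim\rho}\sbra{\ent_{\mu_{\ba}}[f]},
\]
proved by one more application of the chain rule, this time to each conditional $\mu|_{x_{\setminus i}}=\Ex_{\ba'\sim\rho|x_{\setminus i}}[\mu_{\ba'}|_{x_{\setminus i}}]$. Combined with 1-homogeneity (\Cref{lem:conditional flip from 1-homogeneity}), this says that when the intra-component entropy is large, the quantity $\Ex_{\bx\sim\pi,\bi}\sbra{\infdiv{\KL}{\pi|_{\bx_{\setminus \bi}}}{\mu|_{\bx_{\setminus \bi}}}}$ is already large. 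So the local step compares $\pi|_{x_{\setminus i}}$ against the \emph{mixture} conditional $\mu|_{x_{\setminus i}}$, never against any component conditional $\mu_a|_{x_{\setminus i}}$. This means: (i)~the Coordinate Oracle gives exactly the samples needed, with no label-pinning, no rejection, and no $k$-dependence; (ii)~the visible side is $\eta$-balanced by hypothesis, so the improved $\textsc{KL-Test}$ of \Cref{lem:KL tester} applies directly. No augmentation to $[k]\times\Sigma^n$ is needed.

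Your Part~(b) plan, by contrast, targets $\sum_i\Ex_{(a,x)\sim\hat\pi}\sbra{\infdiv{\KL}{\hat\pi(\cdot\mid a)|_{x_{\setminus i}}}{\mu_a|_{x_{\setminus i}}}}$, and here the ``single-draw'' identity does not buy what you need. It lets you unbiasedly estimate $\Ex_{(a,x)}\Ex_{z}[g(a,x_{\setminus i},z)]$ for any \emph{fixed} $g$, but the integrand $\infdiv{\KL}{\hat\pi(\cdot\mid a)|_{x_{\setminus i}}}{\mu_a|_{x_{\setminus i}}}$ (or any Hellinger/$\chi^2$ surrogate) depends nonlinearly on the unknown density $\hat\pi(\cdot\mid a)|_{x_{\setminus i}}$ and is not of this form; a collision/$\chi^2$-type estimator needs at least two independent draws from that conditional, and your ``one extra Coordinate-Oracle call'' returns a sample from $\pi|_{x_{\setminus i}}$, not from $\hat\pi(\cdot\mid a)|_{x_{\setminus i}}$, unless you reject on the label --- which reintroduces the $\Theta(k)$ you set out to avoid. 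The balancedness issue is also real: $\eta$-balance of $\mu$ says nothing about $\mu_a|_{x_{\setminus i}}$, and ``mild smoothing'' of an unknown conditional you cannot sample from is not obviously implementable. The paper's approach sidesteps both problems at once by never leaving the mixture.
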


\begin{remark}\label{remark:don't need full coordinate oracle}
    Our algorithm, and the algorithm of \cite{blanca2023complexity}, do not use the full power of the Coordinate Oracle. The algorithms even work in the setting where the set of calls that the tester can make to the Coordinate Oracle is predetermined as a random set of pairs $\bx\sim\pi$ and $\bi\in[n]$. Moreover, given an efficiently accessible description of the distribution $\mu$, the test has similarly efficient computational complexity. As mentioned in \cite{blanca2023complexity}, this efficient description is equivalent to the ability to efficiently implement single steps of Glauber dynamics for the distribution~$\mu$.
\end{remark}

As mentioned before, in the case where $k=1$, \Cref{thm:sampling informal,thm:testing informal} are essentially known from previous work. In analyzing MCMC algorithms for sampling from $\mu$, one wants to show that the KL-divergence between the state of the Markov chain and $\mu$ is contracting. When analyzing identity-testing algorithms for $\mu$ one wants show that if an unknown distribution $\pi$ has large KL-divergence from $\mu$, then this is captured by the divergences between neighbors in $\Sigma^n$. The ATE inequality then relates the contraction of KL-divergence through the Markov chain and the divergences between neighbors to the original KL-divergence.

However, when $\mu$ is merely a mixture of distributions that individually satisfy ATE, such a local-to-global property fails. That is, the variation between $\mu$ and some alternative distribution $\pi$ is not conserved when zooming in from the entire distribution $\mu$ to $\mu|_{x \setminus i}$.
In fact, when the variation between $\mu$ and $\pi$ is well-captured by the local variation, we are essentially done. 

In order to deal with distributions $\pi$ whose variations are not captured  locally, one needs to identify where exactly the variation \textit{is} captured, and then deal with such cases accordingly. The chain rule for entropy (\Cref{lem:chain rule}) allows us to identify where the variation is captured, and we develop new techniques to deal with such cases.

\subsection{Related Work in Sampling}\label{sec:related work sampling}

Our result \Cref{thm:sampling informal} can be seen as an entropic analogue of the works of Koehler, Lee, and Vuong~\cite{KoehlerLV24} and Huang, Mohanty, Rajaraman, and Wu~\cite{HuangMRW24}, both of which show a similar result when each component satisfies Poincaré inequality rather than a modified log-Sobolev inequality.

More concretely, suppose each mixture component in~$\mu$ satisfies a Poincaré inequality. Using higher-order spectral gaps for the Glauber dynamics chain, \cite{KoehlerLV24} were able to establish that reasonably fast mixing in TV distance occurs from a warm-start having a near-minimal number of samples. However, due to their use of Poincaré inequalities, the mixing time they conclude can be suboptimal. They also prove a version of their main theorem with a faster mixing time using standard log-Sobolev inequalities, but this relies on the hypothesis that each mixture component satisfies a log-Sobolev inequality, which is a stronger assumption than an MLSI.

\cite{HuangMRW24} obtains a result similar to \cite{KoehlerLV24}, again under the assumption that each mixture component satisfies a Poincaré inequality. However, their result is quantitatively much weaker in sample complexity: (1)~it has a larger dependence on $k$ and $\varepsilon$; and, (2)~it has a dependence on the minimum mixing weight~$\rho_a$, which does not appear in \cite{KoehlerLV24}. \cite{HuangMRW24} also  suffers from suboptimal mixing time due to use of Poincaré inequalities, but it does achieve mixing in $\chi^2$-divergence, which is stronger than mixing in TV distance.

Our \Cref{thm:sampling informal} simultaneously achieves: (1) optimal mixing times in TV distance in the case that $\mu$ is a product distribution, by using MLSIs rather than Poincaré inequalities; and, (2)~optimal sample complexity in the case where $\mu$ is a mixture of isolated point masses. A key point in our analysis is that we avoid using standard Chernoff bounds, which seem to necessarily introduce a dependence on the minimum mixing weight, and instead employ a bound on the moment generating function (m.g.f.)\ for the KL-divergence of an empirical distribution from the true distribution.

\subsection{Related Work in Testing}\label{sec:related work testing}
Our \Cref{thm:testing informal} fits into a broader line of work on testing distributions with alternative sampling models, which is motivated by the fact that testing algorithms with access to i.i.d.~samples from a high-dimensional distribution often require an exponential number of samples.

In the direction of providing more powerful models of access to distributions in identity-testing, concurrent works of Canonne, Ron, and Servedio~\cite{canonne2014testing} and Chakraborty, Fischer, Goldhirsch, and Matsliah~\cite{chakraborty2013power} introduced the problem of identity-testing with access to conditional samples from $\pi$. That is, instead of getting access to i.i.d.~samples from the unknown distribution~$\pi$, the tester can specify a subset $S$ of the underlying set of outcomes and receive samples from $\pi|_S$. These papers showed that \emph{any} distribution admits an identity-tester in this model whose sample complexity is $\mathrm{poly}(1/\epsilon)$, where $\epsilon$ denotes the minimum distance of distributions that should be rejected with high probability. Subsequent work \cite{falahatgar2015faster} continued the study of testers with general conditional samples. 

Other access models to the unknown distribution $\pi$ have also been considered. For example, \cite{canonne2014aggregated} considered the dual query and cumulative dual query models in which one can explicitly query the probability density function on points and subsets of the universe, respectively. See also later works \cite{caferov2015optimal,narayanan2023estimating}.

A model in which one can sample from~$\pi$ conditioned on an arbitrary subset of the universe is rather strong; in many settings it might be unclear how to simulate/obtain such samples. There are, however, natural weakenings of this access model in the high-dimensional setting. In particular, suppose $\Sigma^n$ is the set of configurations of a system of $n$~particles (or individuals, organisms, etc.).
Then it might be significantly more reasonable to obtain samples from ``subcubes'' of~$\Sigma^n$, meaning conditional distributions in which a subset of the particles have fixed states. Indeed, this subcube conditioning model was introduced by Bhattacharyya and Chakraborty \cite{bhattacharyya2018property}, who showed that $\widetilde{O}(n^2)$ subcube-conditioned samples suffice for identity-testing of arbitrary distributions on $\Sigma^n$. Subsequent works \cite{canonne2021random,chen2021learning} provided improvements in cases where assumptions on the visible distributon $\mu$ are made. 

Another realistic relaxation of arbitrary conditioning studied was that of pairwise conditioning, or conditioning under subsets of size~$2$, which was also studied in \cite{canonne2014testing}. Narayanan~\cite{narayanan2021tolerant} provided testing algorithms for arbitrary distributions with complexity $\widetilde{O}\pbra{\frac{\sqrt{n}}{\epsilon^2}}$.

We provide some further motivation for the coordinate conditional sampling access model not mentioned in \cite{blanca2023complexity}. Our motivation is based on practical matters, and we suggest a situation in which one might be able to easily simulate coordinate conditional access in cases where subcube conditional access and pairwise conditional access might not be easily simulable. In our situation, we again regard $\Sigma^n$ as a configuration of $n$ particles, and we think of $\pi$ as some distribution on the set of configurations of those $n$ particles, potentially the Gibbs distribution for some set of interactions between the particles. 

A natural model for the evolution of the configuration over time is that of Glauber dynamics. In the Glauber dynamics process, given a current configuration $x\in\Sigma^n$, the next configuration is chosen from $\pi|_{x_{\setminus i}}$. Assume the ability to i)~arbitrary fix configurations of particles; and ii)~simulate Glauber dynamics for a distribution~$\pi$. Then to simulate access to $\pi|_{x_{\setminus i}}$, one can repeatedly initialize the system to~$x$ and simulate one step of Glauber dynamics for $\pi$ starting from~$x$. Any step that updates the $j$th particle for $j\neq i$ is ignored, and the result given by an update to the $i$th particle is a sample from $\pi|_{x_{\setminus i}}$. Note here that we assume that we can tell when a site undergoes resampling, even if the resampling does not result in a new state.
\section{The Chain Rule for Entropy}\label{sec:chain rule}

\subsection{Entropies and Divergences}
The following notion of $\Phi$-entropy was introduced in \cite{chafai2004entropies}:
\begin{definition}
    Let $\Phi$ be a smooth and convex function mapping some interval of real numbers to the nonnegative real numbers. Let $\mu$ be a probability distribution on a finite set $\Omega$. The \emph{$\Phi$-entropy} of a function $f:\Omega\to\R$ with respect to $\mu$ is defined to be
    \begin{align*}    {\ent_\mu}^\Phi[f]:=\Ex_{\bx\sim\mu}\sbra{\Phi\pbra{f(\bx)}}-\Phi\pbra{\Ex_{\bx\sim\mu}[f(\bx)]}.
    \end{align*}
\end{definition}

\begin{fact}
    If $\Phi$ is convex then $\ent^\Phi_{\mu}[f]\geq 0$.
\end{fact}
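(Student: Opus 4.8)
The plan is to invoke Jensen's inequality, which is precisely the tool designed for this situation. Unwinding the definition, $\ent^\Phi_\mu[f] = \Ex_{\bx\sim\mu}[\Phi(f(\bx))] - \Phi(\Ex_{\bx\sim\mu}[f(\bx)])$, so the claim $\ent^\Phi_\mu[f]\geq 0$ is equivalent to the inequality $\Phi(\Ex_{\bx\sim\mu}[f(\bx)]) \leq \Ex_{\bx\sim\mu}[\Phi(f(\bx))]$. Since $\Phi$ is convex on an interval containing the range of $f$ (and $\Ex_{\bx\sim\mu}[f(\bx)]$ lies in that interval, being a convex combination of values of $f$), this is exactly the statement of Jensen's inequality for the convex function $\Phi$ and the random variable $f(\bx)$ with $\bx\sim\mu$.

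Concretely, I would first record that for $\mu$ a distribution on the finite set $\Omega$, the expectation $\Ex_{\bx\sim\mu}[f(\bx)] = \sum_{x\in\Omega}\mu(x) f(x)$ is a convex combination of the points $\{f(x) : x\in\Omega\}$, with weights $\mu(x)\geq 0$ summing to $1$. Then, applying the finite form of Jensen's inequality to the convex function $\Phi$,
\[
    \Phi\pbra{\sum_{x\in\Omega}\mu(x) f(x)} \;\leq\; \sum_{x\in\Omega}\mu(x)\,\Phi\pbra{f(x)},
\]
which can itself be proved by a one-line induction on $|\mathrm{supp}(\mu)|$ using the two-point convexity inequality $\Phi(\lambda a + (1-\lambda) b)\leq \lambda\Phi(a) + (1-\lambda)\Phi(b)$. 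Rearranging gives $\Ex_{\bx\sim\mu}[\Phi(f(\bx))] - \Phi(\Ex_{\bx\sim\mu}[f(\bx)]) \geq 0$, i.e.\ $\ent^\Phi_\mu[f]\geq 0$.

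There is essentially no obstacle here: the only points requiring a moment's care are purely bookkeeping, namely that $\Phi$ is assumed smooth and convex on an interval, that the range of $f$ and the value $\Ex_{\bx\sim\mu}[f(\bx)]$ both lie in that interval so that $\Phi$ is defined at all relevant points, and that one may harmlessly restrict attention to $x$ in the support of $\mu$. If one prefers not to re-derive Jensen's inequality, it can simply be cited. I would therefore expect the author's proof to be a single sentence invoking Jensen.
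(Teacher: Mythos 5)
Your proof is correct, and it is exactly the (implicit) argument the paper has in mind: the paper states this as a Fact with no written proof, since it is immediate from Jensen's inequality applied to the convex function $\Phi$ and the random variable $f(\bx)$ for $\bx\sim\mu$.
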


\begin{remark}
    Assume $\Phi(1)=0$ and let $\pi$ and $\mu$ be  probability distributions on~$\Omega$. 
    Then the $\Phi$-entropy functional
    \[  {\ent_\mu}^\Phi\sbra{\frac{\pi}{\mu}}=  \infdiv{\Phi}{\pi}{\mu}
    \]
    is also known as the $\Phi$-divergence\footnote{The notation of ``$f$-divergence" is more common, but to avoid notational overload we use the symbol ``$\Phi$."} between $\pi$ and $\mu$. For example,\begin{align*}
        {\ent_\mu}^{u\mapsto u\log u}\sbra{\frac{\pi}{\mu}} &= \infdiv{\KL}{\pi}{\mu}.
    \end{align*}
    As usual, we use the convention $0\log 0 =0$, and that this function is defiend on $\R_{\geq0}$. The quantity $\infdiv{\KL}{\pi}{\mu}$ may be~$\infty$ (when $\pi \not \ll \mu$).
\end{remark}

We are most often interested in the case $\Phi(u)=u\log u$ throughout the paper, so when we drop the $\Phi$ in the superscript, the case $\ent_{\mu}=\ent^{u\log u}_\mu$ is assumed. An important property of this specific choice of $\Phi$, used in our identity-testing result, is that the resulting entropy functional is $1$-homogeneous:
\begin{fact}\label{fact:1-homogeneity}
    $\ent$ is $1$-homogeneous: That is, if $\alpha$ is a nonnegative scalar, then $\ent_\mu[\alpha f]=\alpha \ent_\mu[f]$.
\end{fact}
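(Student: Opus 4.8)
The plan is to unwind the definition of $\ent_\mu = \ent^{u\log u}_\mu$ directly. Writing $\Phi(u) = u\log u$, we have
\[
    \ent_\mu[f] = \Ex_{\bx\sim\mu}\sbra{f(\bx)\log f(\bx)} - \pbra{\Ex_{\bx\sim\mu}[f(\bx)]}\log\pbra{\Ex_{\bx\sim\mu}[f(\bx)]}.
\]
For a scalar $\alpha > 0$, I would substitute $\alpha f$ for $f$ and use the identity $\log(\alpha t) = \log\alpha + \log t$ to split each of the two terms. The first term becomes $\alpha\log\alpha\cdot\Ex_{\bx\sim\mu}[f(\bx)] + \alpha\,\Ex_{\bx\sim\mu}[f(\bx)\log f(\bx)]$, and the second becomes $\alpha\log\alpha\cdot\Ex_{\bx\sim\mu}[f(\bx)] + \alpha\,\pbra{\Ex_{\bx\sim\mu}[f(\bx)]}\log\pbra{\Ex_{\bx\sim\mu}[f(\bx)]}$. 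Upon subtraction the two $\alpha\log\alpha\cdot\Ex_{\bx\sim\mu}[f(\bx)]$ contributions cancel, leaving exactly $\alpha\,\ent_\mu[f]$, as desired.

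The case $\alpha = 0$ is handled separately: $\ent_\mu[0] = 0 = 0\cdot\ent_\mu[f]$, using the stated convention $0\log 0 = 0$ (and noting that the constant function $0$ makes both the expectation term and the $\Phi(\Ex[\cdot])$ term vanish). The only point requiring a word of care is that the algebraic manipulation $t\log(\alpha t) = t\log\alpha + t\log t$ must remain valid at $t = 0$; this again follows from the $0\log 0 = 0$ convention, so nothing goes wrong at inputs $\bx$ where $f(\bx) = 0$. There is no genuine obstacle here — the claim is an immediate consequence of the additivity of the logarithm — and it is worth remarking that this same computation shows $1$-homogeneity would \emph{fail} for a generic convex $\Phi$ (e.g.\ $\Phi(u) = u^2$ gives $\ent^\Phi_\mu[\alpha f] = \alpha^2\ent^\Phi_\mu[f]$), which is precisely why the choice $\Phi(u) = u\log u$ is singled out for the identity-testing argument.
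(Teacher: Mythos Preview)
Your argument is correct: unwinding $\ent_\mu[\alpha f]$ with $\Phi(u)=u\log u$ and using $\log(\alpha t)=\log\alpha+\log t$ makes the $\alpha\log\alpha\cdot\Ex_\mu[f]$ terms cancel, leaving $\alpha\,\ent_\mu[f]$; the $\alpha=0$ and $f(\bx)=0$ edge cases are handled by the $0\log 0=0$ convention exactly as you say. The paper does not actually supply a proof of this Fact---it is simply asserted as a standard property---so there is nothing to compare against; your computation is the canonical verification.
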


\subsection{The Chain Rule}
When $\Phi(u)=u^2$, the following \Cref{lem:chain rule} is known as the law of total variance. When $\Phi(u)=u\log u$, it is known as the chain rule for entropy. Both of these tools are of great use in establishing Poincaré and log-Sobolev inequalities. See, for example, \cite{lee1998logarithmic,salez2021sharp}.

The fact itself is standard (see, e.g., \cite{beigi2018phi}), but we give a proof in \Cref{sec:deferred proofs} for the convenience of the reader:
\begin{lemma}\label{lem:chain rule}
    If $\mu=\sum_{a = 1}^k \rho(a)\mu_a$, then $\ent^\Phi_\mu[f]=\ent^\Phi_{\ba\sim\rho}\sbra{\Ex_{\mu_{\ba}}\sbra{f}}+ \Ex_{\ba\sim\rho}\sbra{\ent^\Phi_{\mu_{\ba}}\sbra{f}}$.
\end{lemma}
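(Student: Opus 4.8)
The plan is to prove the identity $\ent^\Phi_\mu[f]=\ent^\Phi_{\ba\sim\rho}\sbra{\Ex_{\mu_{\ba}}\sbra{f}}+ \Ex_{\ba\sim\rho}\sbra{\ent^\Phi_{\mu_{\ba}}\sbra{f}}$ by unfolding all three $\Phi$-entropies according to the definition $\ent^\Phi_\nu[g] = \Ex_{\nu}[\Phi(g)] - \Phi(\Ex_\nu[g])$ and checking that the terms telescope. First I would record the basic observation that, since $\mu = \sum_a \rho(a)\mu_a$, for any function $h:\Omega\to\R$ we have $\Ex_{\bx\sim\mu}[h(\bx)] = \sum_a \rho(a)\Ex_{\bx\sim\mu_a}[h(\bx)] = \Ex_{\ba\sim\rho}\bigl[\Ex_{\bx\sim\mu_{\ba}}[h(\bx)]\bigr]$; this ``tower rule'' is the only structural fact about the mixture that is needed, and I would apply it with $h=\Phi(f)$ and (separately, via a nested expectation) with $h=f$.

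The key computation is then a two-line expansion. Write $\ent^\Phi_\mu[f] = \Ex_{\mu}[\Phi(f)] - \Phi(\Ex_\mu[f])$. By the tower rule, $\Ex_\mu[\Phi(f)] = \Ex_{\ba\sim\rho}\bigl[\Ex_{\mu_{\ba}}[\Phi(f)]\bigr]$. Now inside the expectation over $\ba$, add and subtract $\Phi(\Ex_{\mu_{\ba}}[f])$: this splits $\Ex_{\ba\sim\rho}\bigl[\Ex_{\mu_{\ba}}[\Phi(f)]\bigr]$ into $\Ex_{\ba\sim\rho}\bigl[\ent^\Phi_{\mu_{\ba}}[f]\bigr] + \Ex_{\ba\sim\rho}\bigl[\Phi(\Ex_{\mu_{\ba}}[f])\bigr]$. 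Finally, observe that the function $a\mapsto \Ex_{\mu_a}[f]$ has expectation (over $\ba\sim\rho$) equal to $\Ex_\mu[f]$ again by the tower rule, so $\Ex_{\ba\sim\rho}\bigl[\Phi(\Ex_{\mu_{\ba}}[f])\bigr] - \Phi(\Ex_\mu[f]) = \ent^\Phi_{\ba\sim\rho}\bigl[\Ex_{\mu_{\ba}}[f]\bigr]$. Combining, $\ent^\Phi_\mu[f] = \Ex_{\ba\sim\rho}\bigl[\ent^\Phi_{\mu_{\ba}}[f]\bigr] + \ent^\Phi_{\ba\sim\rho}\bigl[\Ex_{\mu_{\ba}}[f]\bigr]$, which is exactly the claim.

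Honestly there is no real obstacle here — the statement is an algebraic identity and the proof is just bookkeeping with the definition of $\Phi$-entropy plus linearity of expectation under the mixture decomposition. The only thing to be mildly careful about is domain/finiteness issues: one should note $\Omega$ is finite (as assumed in the definition of $\Phi$-entropy) so all the expectations are finite sums and the rearrangements are unconditionally valid, and one should make sure $f$ (and hence $\Ex_{\mu_a}[f]$) stays in the interval on which $\Phi$ is defined — which holds since each $\Ex_{\mu_a}[f]$ is a convex combination of values of $f$. I would present the whole thing as a short displayed chain of equalities with a one-word justification (``tower rule'' / ``add and subtract'') over each step, and I would not bother separating out the $\Phi(u)=u\log u$ or $\Phi(u)=u^2$ special cases since the general argument costs nothing extra.
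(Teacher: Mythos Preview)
Your proof is correct and is essentially the same as the paper's: both unfold the definition $\ent^\Phi_\nu[g]=\Ex_\nu[\Phi(g)]-\Phi(\Ex_\nu[g])$ and use the tower property $\Ex_\mu[\cdot]=\Ex_{\ba\sim\rho}\Ex_{\mu_{\ba}}[\cdot]$ to see the term $\Ex_{\ba\sim\rho}[\Phi(\Ex_{\mu_{\ba}}[f])]$ cancel. The only cosmetic difference is that the paper expands the right-hand side and simplifies to the left, whereas you expand the left and add-and-subtract to reach the right.
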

\Cref{lem:chain rule} is especially useful when a distribution $\mu$ is a mixture of many distributions. In the case where each of the mixture components satisfies approximate tensorization of entropy, we can use \Cref{lem:chain rule} to show that the local entropy of some function $f$ under the distribution $\mu$ is lower-bounded by the portion of the entropy of $f$ that arises as ``intra-component" entropy.
\begin{lemma}\label{lem:local statistic to global statistic}
    If $\mu=\sum_{a = 1}^k \rho(a)\mu_a$, where each $\mu_a$ satisfies $c^*$-ATE, then for any distribution $\pi$ on~$\Sigma^n$ we have
    \begin{align*}
        c^*\cdot \calL_\mu\sbra{f}&\geq \Ex_{\ba\sim\rho}\sbra{{\underset{\bx\sim \mu_{\ba}}{\ent}\sbra{f(\bx)}}}.
    \end{align*}
\end{lemma}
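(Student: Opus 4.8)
The plan is to first strip off the factor $c^*$ by reducing to the $c^*$-free comparison
$\calL_\mu[f]\geq \Ex_{\ba\sim\rho}\sbra{\calL_{\mu_{\ba}}[f]}$, valid for every $f:\Sigma^n\to\R_{\geq0}$ (the function playing the role of the ``$\pi$'' in the statement). Granting this comparison, the lemma follows immediately from $c^*$-ATE applied componentwise: for each fixed $a$ we have $\ent_{\mu_a}[f]\leq c^*\calL_{\mu_a}[f]$, so averaging over $\ba\sim\rho$ and then invoking the comparison gives $\Ex_{\ba\sim\rho}\sbra{\ent_{\mu_{\ba}}[f]}\leq c^*\,\Ex_{\ba\sim\rho}\sbra{\calL_{\mu_{\ba}}[f]}\leq c^*\calL_\mu[f]$. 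So everything reduces to comparing the two local-variation functionals, and since both are sums over $i\in[n]$ of the same shape, it suffices to do this one coordinate at a time.

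Fix a coordinate $i\in[n]$ and a fiber, i.e.\ a setting $z$ of the coordinates in $[n]\setminus\{i\}$ (this is what the paper denotes $x_{\setminus i}$, viewed as a partial configuration). Write $\mu(z)$ and $\mu_a(z)$ for the masses that $\mu$ and $\mu_a$ put on that fiber, and write $\mu|_z$, $\mu_a|_z$ for the corresponding conditional distributions supported on the fiber. The structural observation that makes everything work is that $\mu|_z$ is itself a mixture of the component conditionals: from $\mu=\sum_a\rho(a)\mu_a$ one computes $\mu|_z=\sum_a\widehat\rho_z(a)\,\mu_a|_z$, where $\widehat\rho_z(a)=\rho(a)\mu_a(z)/\mu(z)$ is the Bayes posterior on the component given the fiber (components with $\mu_a(z)=0$ drop out, and fibers with $\mu(z)=0$ contribute nothing to any expectation below and can be discarded). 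Applying the chain rule for entropy (\Cref{lem:chain rule}) to this mixture and throwing away the nonnegative across-component term $\ent_{\ba\sim\widehat\rho_z}\sbra{\Ex_{\mu_{\ba}|_z}[f]}\geq 0$ yields $\ent_{\mu|_z}[f]\geq \Ex_{\ba\sim\widehat\rho_z}\sbra{\ent_{\mu_{\ba}|_z}[f]}$.

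It remains to average this over $\bx\sim\mu$ — note the integrand depends on $\bx$ only through the fiber $\bx_{\setminus i}$ — and to recognize the resulting double expectation. This is precisely a change-of-measure identity: since $\mu(z)\,\widehat\rho_z(a)=\rho(a)\,\mu_a(z)$, the fiber-mass factors cancel, so ``draw a fiber from $\mu$, then a component from its posterior $\widehat\rho_z$'' is the same joint law as ``draw $\ba\sim\rho$, then a fiber from $\mu_{\ba}$''. Carrying out this bookkeeping gives $\Ex_{\bx\sim\mu}\sbra{\ent_{\mu|_{\bx_{\setminus i}}}[f]}\geq \Ex_{\ba\sim\rho}\Ex_{\bx\sim\mu_{\ba}}\sbra{\ent_{\mu_{\ba}|_{\bx_{\setminus i}}}[f]}$, and summing over $i\in[n]$ produces the desired comparison $\calL_\mu[f]\geq \Ex_{\ba\sim\rho}\sbra{\calL_{\mu_{\ba}}[f]}$, which by the first paragraph completes the proof.

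I do not expect a genuine obstacle: the argument is short and the only two places requiring care are (i) the change-of-measure step, where in fact all the content lies — it is exactly the cancellation of the fiber masses $\mu(z)$ against the posterior weights $\widehat\rho_z$ — and (ii) the degenerate fibers/components of zero mass, where the relevant conditional distributions are undefined but only ever appear multiplied by $0$ and so may be omitted.
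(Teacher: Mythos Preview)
Your proposal is correct and follows essentially the same route as the paper: both arguments observe that each fiber conditional $\mu|_{x_{\setminus i}}$ is itself a mixture of the $\mu_a|_{x_{\setminus i}}$ with the Bayes posterior weights, apply the chain rule (\Cref{lem:chain rule}) on that mixture to drop the inter-component term, use the change-of-measure $\mu(z)\widehat\rho_z(a)=\rho(a)\mu_a(z)$ to rewrite the outer average, and then invoke componentwise $c^*$-ATE. The only organizational difference is that you first isolate the comparison $\calL_\mu[f]\geq \Ex_{\ba\sim\rho}\sbra{\calL_{\mu_{\ba}}[f]}$ and then apply ATE, whereas the paper runs the computation inline; the content is identical.
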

\Cref{lem:local statistic to global statistic} is proved in \Cref{sec:deferred proofs}. \Cref{lem:local statistic to global statistic} essentially shows that the intra-component contribution to the entropy of $f$ is captured by the local entropy in the case where the mixture components satisfy ATE. In the context of our sampling result, this implies that any initialization that is evenly balanced across the components will experience fast mixing. In the context of our identity-testing result, this implies that any unknown distribution that is evenly balanced across the components yet still far from the target distribution will be rejected by a local tester.

The main contribution of this paper is showing how to also handle the inter-component contribution in our applications. That is, we need to show that data-based initializations are evenly balanced with high probability and that our identity-testing algorithm will reject unbalanced distributions. 

For this purpose it will be helpful to characterize the inter-component entropy $\ent_{\ba\sim\rho}[\Ex_{\mu_{\ba}}[f]]$ in the case that $f = \pi/\mu$ is a density function. We can notice that $g$ defined by $a \mapsto \E_{\mu_a}\sbra{f}$ is itself a density on $[k]$ vis-a-vis $\rho$. More explicitly, define $\rho_\pi$ to be the probability distribution on $[k]$ induced by sampling $\bx \sim \pi$, and then drawing $\ba \sim \rho_{\bx}$ where $\rho_{\bx}$ is the posterior of $\bx$ from~$\mu$ with respect to $\rho$. In other words
    \begin{align*}
        \rho_\pi(a) = \sum_{x \in \Omega} \pi(x) \cdot \frac{\rho(a) \mu_a(x)}{\mu(x)}.
    \end{align*}
    Now we can observe that $g$ is indeed the density $\rho_\pi/\rho$; hence:
    \begin{fact}
        \label{claim:intertropyasdivergence}
        For any mixture $\mu = \sum_{a=1}^k \rho(a)\mu_a$ we have $\ent^\Phi_{\ba\sim\rho}\sbra{\E_{\mu_{\ba}}\sbra{\pi/\mu}} = \infdiv{\Phi}{\rho_\pi}{\rho}$.
    \end{fact}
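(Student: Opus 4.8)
The plan is to reduce the claimed identity to the single pointwise computation $\E_{\mu_a}[\pi/\mu] = \rho_\pi(a)/\rho(a)$, which is exactly the assertion made (without proof) in the sentence preceding the statement, and then to invoke the definition of $\Phi$-divergence. Recall from the Remark that since $\Phi(1) = 0$, the $\Phi$-entropy of a density is a $\Phi$-divergence; applied with the distribution $\rho$ on $[k]$ and the function $\rho_\pi/\rho$, this gives $\ent^\Phi_{\ba\sim\rho}[\rho_\pi/\rho] = \infdiv{\Phi}{\rho_\pi}{\rho}$. So the entire content of the Fact is that the function $g : a \mapsto \E_{\mu_{\ba}}[\pi/\mu]$ appearing on the left-hand side is literally the density $\rho_\pi/\rho$.

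First I would check that $\rho_\pi$ is a bona fide probability distribution on $[k]$, so that ``$\rho_\pi/\rho$'' and ``$\infdiv{\Phi}{\rho_\pi}{\rho}$'' make sense: summing the displayed formula for $\rho_\pi(a)$ over $a \in [k]$ and exchanging the order of summation gives $\sum_a \rho_\pi(a) = \sum_{x} \pi(x) \cdot \tfrac{1}{\mu(x)}\sum_a \rho(a)\mu_a(x) = \sum_x \pi(x) \cdot \tfrac{\mu(x)}{\mu(x)} = 1$, using $\mu = \sum_a \rho(a)\mu_a$. (One should also note that $\rho_\pi \ll \rho$: if $\rho(a) = 0$ then the $a$-th summand in the definition of $\rho_\pi(a)$ vanishes, so $\rho_\pi(a) = 0$ as well, and the density $\rho_\pi/\rho$ is well-defined.)

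Next I would carry out the pointwise computation. On one hand, for any $a$ with $\rho(a) > 0$,
\[
    \frac{\rho_\pi(a)}{\rho(a)} = \frac{1}{\rho(a)}\sum_{x} \pi(x)\cdot\frac{\rho(a)\mu_a(x)}{\mu(x)} = \sum_{x} \pi(x)\cdot\frac{\mu_a(x)}{\mu(x)}.
\]
On the other hand, directly from the definition of expectation,
\[
    \E_{\bx\sim\mu_a}\sbra{\frac{\pi(\bx)}{\mu(\bx)}} = \sum_{x} \mu_a(x)\cdot\frac{\pi(x)}{\mu(x)},
\]
and these two expressions are term-by-term equal. Hence $g = \rho_\pi/\rho$ as functions on $\{a : \rho(a) > 0\}$, which is all that is needed to evaluate an entropy with respect to $\rho$. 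Substituting into $\ent^\Phi_{\ba\sim\rho}[\E_{\mu_{\ba}}[\pi/\mu]] = \ent^\Phi_{\ba\sim\rho}[\rho_\pi/\rho] = \infdiv{\Phi}{\rho_\pi}{\rho}$ completes the argument.

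There is essentially no hard step here; the only point requiring a bit of care is the handling of supports — what happens when $\mu(x) = 0$ for some $x$ (so $\pi/\mu$ is taken under the usual convention and the relevant $\mu_a(x)$ are also zero, so no term is problematic), and when $\rho(a) = 0$, addressed above. If the paper is willing to assume $\mu$ (equivalently all $\mu_a$) fully supported in the applications where this Fact is invoked, these caveats can be dispensed with in one sentence; otherwise the conventions $0\log 0 = 0$ and ``$0/0 = 0$'' suffice to make every manipulation above literally correct.
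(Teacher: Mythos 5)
Your proof is correct and takes exactly the same route as the paper: the paper simply states in the preceding sentence that $g : a \mapsto \E_{\mu_a}[\pi/\mu]$ is the density $\rho_\pi/\rho$ and treats the Fact as an immediate consequence, while you spell out the one-line pointwise verification and the (harmless) support caveats. Nothing to add.
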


\section{Sampling from Data-Based Initializations}\label{sec:application sampling}

In this section we prove \Cref{thm:sampling informal}. An important observation made in \cite{HuangMRW24} is that the Glauber dynamics chain for a mixture of distributions satisfying either Poincaré or modified log-Sobolev inequalities satisfies a \textit{weak Poincaré} or \textit{weak modified log-Sobolev inequality} respectively. They then use these weak functional inequalities to infer fast mixing. We first generalize this result to the setting of $\Phi$-Sobolev inequalities.

\begin{definition}
    Let $P$ be a reversible discrete-time Markov operator on $\Omega$ with stationary distribution $\mu$. Then the Dirichlet form with respect to $P$ is defined on functions $f, g : \Omega \to \R$ as
    \begin{align*}
        \calE_P(f, g) = \E_{\bx \sim \mu} \E_{\by \sim_P \bx} \sbra{(f(\boldsymbol{x})-f(\boldsymbol{y}))(g(\boldsymbol{x})-g(\boldsymbol{y}))}.
    \end{align*}
\end{definition}

We will study inequalities relating the Dirichlet form, which is a measure of local variation, to measures of global variation in the case where the associated Markov chain is the Glauber dynamics chain with respect to a distribution $\mu$ on $\Sigma^n$. The Glauber dynamics chain is the chain in which given a current state $x^{(t)}\in\Sigma^n$ the next state is sampled by sampling a uniform random $\bi\in[n]$ and then sampling the next state $\bx^{(t)}\sim\mu|_{x^{(t)}_{\setminus i}}$. That is, the chain resamples single coordinates at a time in a way that ensures that $\mu$ is stationary.

\begin{definition}\label{def:Phi-Sobolev}
    A distribution $\mu$ on $\Sigma^n$ satisfies a \emph{$\Phi$-Sobolev inequality} with constant $c^*$ if for all $f : \Sigma^n \to \R_{\geq 0}$,
    \begin{align}\label{eq:MLSI}
        {\ent_\mu}^\Phi\sbra{f} \leq c^* \cdot \calE_P(f, \Phi'(f)).
    \end{align}
    Here $P$ is the Glauber dynamics chain associated to $\mu$.
\end{definition}

The importance of this notion comes from the fact that the right-hand side of \Cref{eq:MLSI} governs the decay of the $\Phi$-entropy between a Markov chain's current distribution and the stationary distribution through time.

\begin{remark}
    Let $\Phi(x) = x\log x$ and notice that $\Phi' = x \mapsto 1+\log x$. Since $\mathcal{E}_P$ is translation-invariant, we have $\calE_P(f, \Phi'(f)) = \mathcal{E}_P(f, \log f)$, and therefore \Cref{eq:MLSI} is the \emph{modified log-Sobolev inequality}. Similarly, when $\Phi(x) = x^2$ we have $\calE_P(f, \Phi'(f)) = 4\mathcal{E}(f, f)$, and \Cref{eq:MLSI} is (up to factor $4$) the \emph{Poincaré inequality}.
\end{remark}

\begin{remark}\label{rem:ATE to MLSI}
    It is known that $c^*$-ATE implies $\tfrac{2}{c^*n}$-MLSI. See Proposition 1.1 in \cite{caputo2015approximate}.
\end{remark}

The following \Cref{lem:maintrick} is essentially a $\Phi$-entropic generalization of Theorem~4.5 from \cite{HuangMRW24}. See \Cref{sec:deferred proofs} for the proof.
\begin{lemma}
    \label{lem:maintrick}
    Let $\mu = \sum_{a = 1}^k \rho(a)\mu_a$ be a mixture of distributions on $\Sigma^n$ with each $\mu_a$ satisfying a $\Phi$-Sobolev inequality with constant $c^*$.
    Let $P$ be the transition matrix for the Glauber dynamics for $\mu$ and $P_t$ be the associated continuous-time Markov operator. Then, for any initial distribution~$\pi$ we have
    \begin{align}\label{eq:weak MLSI mixing}
        \infdiv{\Phi}{P_t\pi}{\mu} \leq (1-1/c^*n)^t \cdot \infdiv{\Phi}{\pi}{\mu} + \Ex_{\bs}\sbra{ {\ent_{\ba \sim \rho}}^\Phi\sbra{\E_{P_{\bs}\pi}\sbra{\frac{\mu_{\ba}}{\mu}}}},
    \end{align}
    where $\bs$ is some random variable supported on $[0,t]$.
\end{lemma}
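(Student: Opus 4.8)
The plan is to run the continuous-time entropy-dissipation argument underlying \Cref{thm:sampling informal}, in the generality of $\Phi$-entropy (this being the $\Phi$-entropic version of Theorem~4.5 of \cite{HuangMRW24}). Write $f_t := P_t\pi/\mu$ for the density of the chain's law at time $t$, and put $D(t) := \ent^\Phi_\mu[f_t] = \infdiv{\Phi}{P_t\pi}{\mu}$ and $I(t) := \ent^\Phi_{\ba\sim\rho}\sbra{\E_{\mu_\ba}[f_t]}$, so that $I(t) = \infdiv{\Phi}{\rho_{P_t\pi}}{\rho}$ by \Cref{claim:intertropyasdivergence}. Two ingredients do the work. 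First, the standard $\Phi$-entropy dissipation identity for the reversible continuous-time Glauber semigroup, $\frac{\mathrm d}{\mathrm dt}D(t) = -\calE_P\pbra{f_t,\Phi'(f_t)}$ (the precise constant here depends on one's normalization conventions for $P_t$ and $\calE_P$). Second, a \emph{weak $\Phi$-Sobolev inequality} for the mixture: for all $f:\Sigma^n\to\R_{\ge 0}$,
\[
    \calE_P\pbra{f,\Phi'(f)} \;\ge\; \tfrac{1}{c^*n}\pbra{\ent^\Phi_\mu[f] - \ent^\Phi_{\ba\sim\rho}\sbra{\E_{\mu_\ba}[f]}}.
\]
Granting both, we get $\frac{\mathrm d}{\mathrm dt}D(t)\le -\tfrac{1}{c^*n}\bigl(D(t)-I(t)\bigr)$; multiplying by the integrating factor $e^{t/c^*n}$ and integrating over $[0,t]$ gives $D(t)\le e^{-t/c^*n}D(0) + \tfrac1{c^*n}\int_0^t e^{-(t-s)/c^*n}I(s)\,\mathrm ds$. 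The last integral equals $\bigl(1-e^{-t/c^*n}\bigr)\,\E_{\bs}[I(\bs)]\le \E_{\bs}[I(\bs)]$, where $\bs\in[0,t]$ is the random time with density proportional to $e^{-(t-s)/c^*n}$; rewriting $I(\bs)=\ent^\Phi_{\ba\sim\rho}\sbra{\E_{P_{\bs}\pi}[\mu_\ba/\mu]}$ by \Cref{claim:intertropyasdivergence} yields exactly \Cref{eq:weak MLSI mixing} (with the cosmetic replacement of $e^{-t/c^*n}$ by the comparable $(1-1/c^*n)^t$).

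The heart of the proof is the weak $\Phi$-Sobolev inequality, and the essential tool is the chain rule. By \Cref{lem:chain rule}, the quantity on the right, $\ent^\Phi_\mu[f] - \ent^\Phi_{\ba\sim\rho}[\E_{\mu_\ba}[f]]$, equals \emph{exactly} the intra-component entropy $\E_{\ba\sim\rho}\bigl[\ent^\Phi_{\mu_\ba}[f]\bigr]$ --- the inter-component piece has cancelled for free --- so it suffices to prove $\E_{\ba\sim\rho}[\ent^\Phi_{\mu_\ba}[f]]\le c^*n\,\calE_P(f,\Phi'(f))$. I would do this by localizing: write $\calE_P(f,\Phi'(f)) = \tfrac1n\sum_i \E_{\bx\sim\mu}\bigl[\mathcal A^i_{\mu|_{\bx_{\setminus i}}}(f,\Phi'(f))\bigr]$, where $\mathcal A^i_\nu(f,g)=\E_{\by,\by'\sim\nu}[(f(\by)-f(\by'))(g(\by)-g(\by'))]=2\,\mathbf{Cov}_\nu(f,g)$ is the one-site resampling form; bound each local term below by a constant times the local $\Phi$-entropy $\ent^\Phi_{\mu|_{\bx_{\setminus i}}}[f]$ (for $\Phi(u)=u\log u$ this is the two-point inequality $\mathcal A_\nu(f,\log f)\ge 2\ent_\nu[f]$, i.e.\ Jensen); this reduces matters to bounding $\E_{\ba\sim\rho}[\ent^\Phi_{\mu_\ba}[f]]$ by the local entropy functional $\calL_\mu[f]$ (more precisely its $\Phi$-analogue $\sum_i\E_{\bx\sim\mu}[\ent^\Phi_{\mu|_{\bx_{\setminus i}}}[f]]$), which is precisely \Cref{lem:local statistic to global statistic}. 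Thus for the case $\Phi(u)=u\log u$ relevant to \Cref{thm:sampling informal}, the whole argument is just $\calE_P(f,\log f)\ge\tfrac2n\calL_\mu[f]\ge\tfrac2{c^*n}\E_{\ba\sim\rho}[\ent_{\mu_\ba}[f]]$, by the two-point inequality followed by \Cref{lem:local statistic to global statistic}, and then \Cref{lem:chain rule} to identify the right-hand side.

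The step I expect to be the main obstacle --- and the reason one must localize and use the chain rule at the one-site level rather than globally --- is the comparison of Dirichlet forms. The tempting shortcut is to prove $\calE_P(f,\Phi'(f))\ge \E_{\ba\sim\rho}[\calE_{P^{(\ba)}}(f,\Phi'(f))]$, where $P^{(a)}$ is the Glauber chain of $\mu_a$ and the $\rho$-average uses the appropriate coordinatewise posteriors, and then apply the componentwise $\Phi$-Sobolev inequalities directly. By the law of total covariance, one finds for any probability vector $w$ and distributions $\nu_a$ that
\[
    \mathcal A_{\sum_a w_a\nu_a}(f,\Phi'(f)) - \sum_a w_a\,\mathcal A_{\nu_a}(f,\Phi'(f)) \;=\; 2\,\mathbf{Cov}_{\ba\sim w}\!\pbra{\E_{\nu_{\ba}}[f],\ \E_{\nu_{\ba}}[\Phi'(f)]}.
\]
For $\Phi(u)=u^2$ this covariance is $2\,\mathbf{Var}_{\ba\sim w}(\E_{\nu_{\ba}}[f])\ge 0$ (and the shortcut succeeds --- this is how the Poincaré case of \cite{KoehlerLV24,HuangMRW24} goes), but for non-quadratic $\Phi$ the per-component means of $f$ and of $\Phi'(f)$ need not be comonotone across components, so the covariance --- hence the shortcut --- can fail. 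Routing through the local $\Phi$-entropies avoids this, because the chain rule gives $\ent^\Phi_{\mu|_z}[f]\ge \E_{\ba\sim w}[\ent^\Phi_{\mu_\ba|_z}[f]]$ for \emph{every} convex $\Phi$; the price is that one must carefully track the one-site posteriors $w^{(i)}(\bx_{\setminus i})$ of the component index while reparametrizing $\bx\sim\mu$ as ``$\bx_{\setminus i}$ from its $\mu$-marginal, then $\ba$ from the posterior, then $\bx_i$ from $\mu_\ba|_{\bx_{\setminus i}}$,'' and this bookkeeping --- packaged inside \Cref{lem:local statistic to global statistic} --- is where the real content of the local-to-global passage lies.
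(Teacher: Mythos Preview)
Your high-level architecture matches the paper's exactly: establish a weak $\Phi$-Sobolev inequality of the form $\ent^\Phi_\mu[f]\le c^*\calE_P(f,\Phi'(f))+\ent^\Phi_{\ba\sim\rho}[\E_{\mu_{\ba}}[f]]$, plug it into the dissipation identity $\tfrac{d}{dt}D(t)=-\calE_P(f_t,\Phi'(f_t))$, and integrate (your Gr\"onwall step and identification of~$\bs$ are the same as the paper's reference to \cite{HuangMRW24}).

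The substantive difference is in how the weak $\Phi$-Sobolev inequality is proved, and here there is a genuine gap relative to the stated hypotheses. Your route goes $\calE_P(f,\log f)\ge\tfrac{2}{n}\calL_\mu[f]$ via the two-point inequality, and then invokes \Cref{lem:local statistic to global statistic}. But the last step inside that lemma---passing from $\sum_i\E_{\bx\sim\mu_a}[\ent_{\mu_a|_{\bx_{\setminus i}}}[f]]$ down to $\tfrac{1}{c^*}\ent_{\mu_a}[f]$---is exactly the \emph{ATE} hypothesis on~$\mu_a$, not the $\Phi$-Sobolev (MLSI) hypothesis. Since MLSI does not imply ATE in general, your argument proves \Cref{lem:maintrick} only under the stronger ATE assumption. (That suffices for \Cref{thm:sampling informal} as written, but not for the lemma as stated.)

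The paper instead uses precisely the ``tempting shortcut'' you rejected: it proves $\calE_P(f,\Phi'(f))\ge\E_{\ba\sim\rho}[\calE_{P_{\ba}}(f,\Phi'(f))]$ and then applies the componentwise $\Phi$-Sobolev inequalities directly. The point you are missing is that one need not go through the law-of-total-covariance decomposition at all. Expand the Glauber Dirichlet form edge-by-edge: each edge $\{x,y\}$ contributes with weight $\tfrac{\mu(x)\mu(y)}{\mu(x)+\mu(y)}$ times the factor $(f(x)-f(y))(\Phi'(f(x))-\Phi'(f(y)))$, and this factor is \emph{nonnegative} for every convex~$\Phi$ because $\Phi'$ is monotone. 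Hence one only needs the edge weight for~$\mu$ to dominate the $\rho$-average of the edge weights for the~$\mu_a$'s, and that is just concavity of $(a,b)\mapsto\tfrac{ab}{a+b}$. So the Dirichlet-form comparison succeeds for all convex~$\Phi$---not only the quadratic case---with monotonicity of~$\Phi'$ playing the role that positive-definiteness plays when $\Phi(u)=u^2$. Your comonotonicity concern is an artifact of the covariance viewpoint, not of the inequality itself.
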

\Cref{lem:maintrick} shows that as long as one initializes Glauber dynamics for $\mu$ at a distribution $\pi$ so that the second term on the right-hand side in \Cref{eq:weak MLSI mixing} is small, the chain will experience fast mixing at the rate of the individual mixtures. The question is then how to choose $\pi$ such that this quantity is indeed small. We show that when $\pi$ is a ``data-based initialization'', meaning the empirical distribution formed by some number of i.i.d.~samples from $\mu$, the inter-component entropy is indeed small by concentration.

Recall \Cref{claim:intertropyasdivergence}, which motivates us to study the inter-component entropy as the $\Phi$-divergence between the empirical posterior distribution $\rho_{\bm\pi}=\frac1m\sum_{j=1}^m\rho_{\bx_j}$ and the mixing weights. If the mixture components are separated, the task becomes to learn a distribution on $[k]$ from samples up to $\varepsilon$ error in $\Phi$-divergence. Furthermore, by exactly characterizing the m.g.f.~of the empirical estimator's KL-divergence, we can use a convexity argument to handle the case when components are not separated. While we focus here on the well-studied case of KL-divergence, to establish a version of \Cref{thm:sampling informal} for any $\Phi$-divergence one generally only needs to establish the sample complexity of the learning task for a reasonable estimator (see \cite{canonne2020shortnotelearningdiscrete} for more on such learning tasks).

\subsection{The Case of KL-Divergence}
Our main result, \Cref{thm:sampling informal}, is to apply this paradigm in the case of KL-divergence. When all $\rho_{\bx_j}$ are indicator vectors (corresponding to the case of disjointly supported mixture components) the result then directly follows by taking $m = \Theta(k/\varepsilon + \log(1/\delta)/\varepsilon)$ in the recent result of Agrawal given here:
\begin{theorem}[\cite{agrawal2020finite}, Theorem I.2]\label{thm:agrawalmain}
For $\varepsilon > \frac{k-1}{m}$ we have that
    \begin{align*}
        \Pr_{\ba_1,\dots,\ba_m\sim\rho}\sbra{\infdiv{\KL}{\avg_{j\in[m]} \delta_{\ba_j}}{\rho} > \varepsilon} &\leq e^{-\epsilon m}\cdot \pbra{\frac{e\epsilon m}{k-1}}^{k-1}.
    \end{align*}
\end{theorem}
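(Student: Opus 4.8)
The plan is to establish a moment-generating-function bound for the random variable $Z := m\cdot \infdiv{\KL}{\widehat\rho}{\rho}$ and then apply the Chernoff method, where $\widehat\rho := \avg_{j\in[m]}\delta_{\ba_j}$, its count vector $N = (N_1,\dots,N_k) := m\widehat\rho$ is $\mathrm{Multinomial}(m,\rho)$, and $Z = \sum_i N_i \log\tfrac{N_i}{m\rho_i}$ (convention $0\log 0 = 0$; we may assume $\rho$ fully supported, as otherwise the event has probability $0$). The key observation is that the claimed bound $e^{-\epsilon m}(e\epsilon m/(k-1))^{k-1}$ is exactly the Chernoff tail bound for a $\mathrm{Gamma}(k-1,1)$ law: since $\Pr\sbra{Z > s} \le \inf_{\lambda\in[0,1)} e^{-\lambda s}\,\E\sbra{e^{\lambda Z}}$, and $\inf_{\lambda\in[0,1)} e^{-\lambda s}(1-\lambda)^{-(k-1)} = (es/(k-1))^{k-1}e^{-s}$ with optimizer $\lambda = 1-(k-1)/s$ (which lies in $[0,1)$ precisely when $s>k-1$), it suffices to prove
\[
    \E\sbra{e^{\lambda Z}} \;\leq\; (1-\lambda)^{-(k-1)} \qquad \text{for all } \lambda \in [0,1),
\]
and then take $s = \epsilon m$ (so the hypothesis $\epsilon > (k-1)/m$ is exactly $s > k-1$).

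I would prove this m.g.f.\ bound by induction on the alphabet size $k$, uniformly over the sample count $m$, using the chain rule for KL-divergence. The case $k=1$ is trivial since then $Z \equiv 0$. For the inductive step, split $[k] = \{k\}\sqcup[k-1]$; the chain rule gives the pointwise identity $Z = m\cdot d_{\mathrm{Ber}}(N_k/m \,\|\, \rho_k) + (m-N_k)\cdot \infdiv{\KL}{\widehat\sigma}{\sigma}$, where $d_{\mathrm{Ber}}$ is binary KL-divergence, $\sigma := \rho(\,\cdot \mid [k-1])$, and $\widehat\sigma := (N_1,\dots,N_{k-1})/(m-N_k)$. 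The crucial point is that, conditioned on $N_k$, the vector $(N_1,\dots,N_{k-1})$ is $\mathrm{Multinomial}(m-N_k,\sigma)$, so $\widehat\sigma$ is an empirical distribution from $m-N_k$ i.i.d.\ samples of $\sigma$ over an alphabet of size $k-1$. Conditioning on $N_k$ and applying the inductive hypothesis to the inner factor gives
\[
    \E\sbra{e^{\lambda Z}} \;\leq\; (1-\lambda)^{-(k-2)}\cdot \E_{N_k\sim\mathrm{Bin}(m,\rho_k)}\sbra{\exp\!\pbra{\lambda m\, d_{\mathrm{Ber}}(N_k/m \,\|\, \rho_k)}},
\]
so the whole problem collapses to the $k=2$ (binomial) case: for every $m$, every $p\in(0,1)$, and every $\lambda\in[0,1)$, $\E_{N\sim\mathrm{Bin}(m,p)}\sbra{\exp(\lambda m\, d_{\mathrm{Ber}}(N/m\,\|\,p))} \leq (1-\lambda)^{-1}$.

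I expect this binomial m.g.f.\ estimate to be the main obstacle — it is the one place something genuinely quantitative must happen. Written out, one needs $\sum_{n=0}^m \binom{m}{n} p^{(1-\lambda)n}(1-p)^{(1-\lambda)(m-n)}(n/m)^{\lambda n}((m-n)/m)^{\lambda(m-n)} \leq (1-\lambda)^{-1}$. A natural route is Stirling's formula, which turns each summand into (approximately) the $(1-\lambda)$-power of the binomial probability $\binom{m}{n}p^n(1-p)^{m-n}$ times a local-central-limit correction of order $(m/(n(m-n)))^{\lambda/2}$; a Laplace-method estimate of the resulting sum near $n\approx mp$ produces the Gaussian-integral factor $(1-\lambda)^{-1/2}$, and the remaining slack up to $(1-\lambda)^{-1}$ should absorb the lower-order Stirling terms and the tails. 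Alternatively one can Poissonize, where the ``Poisson KL'' $x\log(x/\mu)-x+\mu$ has a somewhat cleaner m.g.f., at the cost of a $\Theta(\sqrt m)$ factor from conditioning on the total count; either way, nailing the clean constant $(1-\lambda)^{-1}$ is the delicate part.

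Finally I would note that the elementary method-of-types bound $\Pr\sbra{\infdiv{\KL}{\widehat\rho}{\rho} > \epsilon} \leq \binom{m+k-1}{k-1}e^{-\epsilon m}$ — obtained from the pointwise estimate $\Pr\sbra{\widehat\rho = \nu} \leq e^{-m\infdiv{\KL}{\nu}{\rho}}$ and a union bound over the $\binom{m+k-1}{k-1}$ types — is strictly weaker than the claimed inequality, carrying a spurious factor of order $m^{k-1}$ in place of $(\epsilon m)^{k-1}$ (and it is worse than trivial near the threshold $\epsilon \approx (k-1)/m$). Hence some honest use of the cancellation among types near $\rho$, as the m.g.f.\ argument provides, is unavoidable.
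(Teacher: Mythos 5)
Your Chernoff reduction is exactly what the paper does. The paper states this tail bound and the m.g.f.\ bound $\E[\exp(\lambda\cdot\infdiv{\KL}{\widehat\rho}{\rho})]\le(1-\lambda/m)^{-(k-1)}$ as two imported results from Agrawal (Theorems I.2 and I.3), with the former following from the latter ``immediately... when $\lambda=m-\frac{k-1}{\varepsilon}$.'' That is precisely your optimizer: writing $Z=m\cdot\KL$ and $\lambda'=\lambda/m$, your $\lambda'=1-(k-1)/s$ with $s=\varepsilon m$ is the paper's $\lambda=m-(k-1)/\varepsilon$, and your Gamma-tail computation $\inf_{\lambda'}e^{-\lambda's}(1-\lambda')^{-(k-1)}=(es/(k-1))^{k-1}e^{-s}$ is correct and reproduces the stated bound, with $s>k-1$ exactly the hypothesis $\varepsilon>(k-1)/m$. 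Up to this point you have recovered the paper's (one-line) proof in full.

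Where you diverge is that you go on to try to prove the m.g.f.\ bound itself rather than cite it. Your reduction is sound: the KL chain rule decomposition $m\KL(\widehat\rho\|\rho)=m\,d_{\mathrm{Ber}}(N_k/m\|\rho_k)+(m-N_k)\KL(\widehat\sigma\|\sigma)$ together with the fact that $(N_1,\dots,N_{k-1})\mid N_k$ is $\mathrm{Multinomial}(m-N_k,\sigma)$ correctly peels off one factor of $(1-\lambda)^{-1}$ per coordinate and collapses everything to the binomial claim $\E_{N\sim\mathrm{Bin}(m,p)}[\exp(\lambda m\,d_{\mathrm{Ber}}(N/m\|p))]\le(1-\lambda)^{-1}$. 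But that binomial estimate is the entire quantitative content of Agrawal's theorem, and your treatment of it is a heuristic sketch (Stirling plus Laplace method, or Poissonization) rather than a proof — as you yourself flag. As it stands this is a genuine gap if one is trying to prove the m.g.f.\ bound from scratch; the Laplace-method reasoning plausibly gives the right asymptotic exponent $(1-\lambda)^{-1/2}$ but does not yield the clean, $m$-uniform, $p$-uniform constant $(1-\lambda)^{-1}$ without real work (this is where Agrawal spends his effort, via careful pointwise bounds of the form $\Pr_p[N=x]e^{\lambda m d_{\mathrm{Ber}}(x/m\|p)}=\Pr_p[N=x]^{1-\lambda}\Pr_{x/m}[N=x]^{\lambda}$ and comparison to an explicit sum/integral). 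Since the paper simply cites the m.g.f.\ bound, however, this gap lies outside the scope of the statement's proof; your Chernoff derivation, which is the proof, is correct and matches the paper.
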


This result is established immediately by the following m.g.f.~bound when $\lambda = m - \frac{k-1}{\varepsilon}$.

\begin{theorem}[\cite{agrawal2020finite}, Theorem I.3]\label{thm:agrawal}
For $0\leq \lambda <m$ we have that
    \begin{align*}
        \Ex_{\ba_1,\dots,\ba_m\sim\rho}\sbra{\mathrm{exp}\pbra{\lambda \cdot \infdiv{\KL}{\avg_{j\in[m]} \delta_{\ba_j}}{\rho}}} &\leq \pbra{\frac1{1-\lambda/m}}^{k-1}.
    \end{align*}
\end{theorem}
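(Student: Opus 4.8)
The plan is to induct on $k$, peeling off one coordinate at a time so that everything reduces to the Bernoulli case $k=2$, which contains all the real content. For $k=1$ the statement is trivial, since then $\infdiv{\KL}{\avg_{j}\delta_{\ba_j}}{\rho}\equiv 0$.

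\textbf{Inductive step.} Write $\hat\rho=\avg_{j\in[m]}\delta_{\ba_j}$ and decompose $\rho=\rho(1)\,\delta_1+(1-\rho(1))\,\rho'$, where $\rho'$ is $\rho$ conditioned on $\{2,\dots,k\}$; let $\hat\rho'$ be the analogous conditioning of $\hat\rho$ and let $N_1=\#\{j:\ba_j=1\}$, so $\hat\rho(1)=N_1/m$ with $N_1\sim\mathrm{Bin}(m,\rho(1))$. Applying \Cref{lem:chain rule} to this mixture with the density $f=\hat\rho/\rho$, together with \Cref{fact:1-homogeneity} to rescale $f$ on $\{2,\dots,k\}$ into the density $\hat\rho'/\rho'$, gives the chain rule for KL-divergence,
\[
    \infdiv{\KL}{\hat\rho}{\rho}=d\pbra{\tfrac{N_1}{m}\,\|\,\rho(1)}+\pbra{1-\tfrac{N_1}{m}}\cdot\infdiv{\KL}{\hat\rho'}{\rho'},
\]
where $d(\cdot\|\cdot)$ denotes binary KL-divergence. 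Conditioned on $N_1$, the $m-N_1$ samples differing from $1$ are i.i.d.\ from the $(k-1)$-symbol distribution $\rho'$ with empirical distribution $\hat\rho'$, and $\lambda(1-N_1/m)=\lambda\cdot\tfrac{m-N_1}{m}$ has the same ratio to $m-N_1$ that $\lambda$ has to $m$. Applying the inductive hypothesis to the inner conditional expectation (valid since $0\le\lambda\tfrac{m-N_1}{m}<m-N_1$ when $N_1<m$, and trivial when $N_1=m$) yields
\[
    \Ex\sbra{\exp\pbra{\lambda\cdot\infdiv{\KL}{\hat\rho}{\rho}}}\ \le\ \pbra{1-\tfrac{\lambda}{m}}^{-(k-2)}\cdot\Ex_{N_1\sim\mathrm{Bin}(m,\rho(1))}\sbra{\exp\pbra{\lambda\, d\pbra{\tfrac{N_1}{m}\,\|\,\rho(1)}}}.
\]
By the Bernoulli case below, the last factor is at most $(1-\lambda/m)^{-1}$, completing the induction.

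\textbf{Bernoulli case.} It remains to prove that for $N\sim\mathrm{Bin}(m,p)$ and $0\le\lambda<m$ we have $\Ex[\exp(\lambda\,d(N/m\|p))]\le(1-\lambda/m)^{-1}$. Using the elementary identity $\binom{m}{n}p^{n}(1-p)^{m-n}\exp(\lambda\,d(\tfrac nm\|p))=R(n)\exp(-(m-\lambda)\,d(\tfrac nm\|p))$, where $R(n):=\binom{m}{n}(\tfrac nm)^{n}(1-\tfrac nm)^{m-n}$ is the probability that $\mathrm{Bin}(m,n/m)$ equals $n$, the expectation in question equals $\sum_{n=0}^{m}R(n)\exp(-(m-\lambda)\,d(\tfrac nm\|p))$. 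Stirling's approximation gives $R(0)=R(m)=1$ and $R(n)\le\sqrt{m/(2\pi n(m-n))}\cdot e^{1/(12m)}$ for $1\le n\le m-1$, reducing the problem to bounding $\sum_{n=1}^{m-1}\sqrt{m/(n(m-n))}\,\exp(-(m-\lambda)\,d(\tfrac nm\|p))$ together with two easily controlled boundary terms. This sum is handled by comparison to $\int_0^1 \frac{\exp(-(m-\lambda)\,d(x\|p))}{\sqrt{x(1-x)}}\,dx$, using $d(x\|p)\ge 2(x-p)^{2}$ for the decay away from $x=p$ and the exact evaluation $\int_0^1 \frac{dx}{\sqrt{x(1-x)}}=\pi$.

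\textbf{Main obstacle.} The difficulty lies entirely in the Bernoulli case: a crude sum-versus-integral estimate, or the alternative of splitting the two tails and applying the one-sided Chernoff bound $\Pr[N/m\ge a]\le\exp(-m\,d(a\|p))$ on each, leaves an unwanted constant factor (the two-tail route gives $(1+\lambda/m)(1-\lambda/m)^{-1}$), whereas the target $(1-\lambda/m)^{-1}$ has constant exactly $1$. Reaching the sharp constant requires tracking the Stirling corrections carefully in the terms near the mode $n\approx mp$, where the summand is largest, or — more robustly — an ODE-comparison: setting $\phi(\lambda):=\Ex[\exp(\lambda\,d(N/m\|p))]$, it suffices to show $\phi'(\lambda)\le\phi(\lambda)^{2}/m$, since $\lambda\mapsto(1-\lambda/m)^{-1}$ solves $y'=y^{2}/m$ with $y(0)=1$; this in turn reduces to $\Ex^{(\lambda)}[d(N/m\|p)]\le\phi(\lambda)/m$ under the $\lambda$-tilted law, which needs the Stirling/Chernoff estimates only up to the slack afforded by $\phi(\lambda)\ge 1$.
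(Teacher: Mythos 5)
The paper does not prove this theorem; it is quoted directly as Theorem~I.3 of \cite{agrawal2020finite}, so there is no in-paper argument to compare against, and your attempt must be judged on its own terms. Your inductive reduction is sound: the chain-rule decomposition of $\infdiv{\KL}{\hat\rho}{\rho}$ into a binary divergence plus a reweighted residual, the observation that the tilting $\lambda\mapsto\lambda(m-N_1)/m$ preserves the ratio $\lambda/m$ so the inductive hypothesis applies conditionally on $N_1$, and the algebraic identity $\binom{m}{n}p^n(1-p)^{m-n}e^{\lambda d(n/m\|p)}=R(n)e^{-(m-\lambda)d(n/m\|p)}$ are all correct, and this overall shape does match how \cite{agrawal2020finite} organizes the proof.

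However, the Bernoulli case --- which, as you say, contains all the content --- is not actually established. The Stirling/integral sketch would require controlling sum-versus-integral errors, the $e^{1/(12m)}$ and lower-order Stirling corrections, and the Laplace-method error near $n\approx mp$ all the way down to the sharp constant of exactly $1$, and you yourself observe that the more forgiving two-tail Chernoff route loses a factor $(1+\lambda/m)$. The proposed ODE comparison $\phi'\le\phi^2/m$ is a plausible program, but the pivotal inequality $\Ex^{(\lambda)}\sbra{d(N/m\|p)}\le\phi(\lambda)/m$ under the tilted law is asserted rather than proved: already at $\lambda=0$ it demands $\Ex\sbra{d(N/m\|p)}\le 1/m$, which is a nontrivial lemma in its own right, and for $\lambda>0$ it is not evident that the ``slack from $\phi(\lambda)\ge1$'' suffices, since the tilted mean of $d(N/m\|p)$ also grows with $\lambda$. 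So the proposal correctly identifies the structure of the argument and candidly flags the hard step, but does not close it; a complete proof needs the full Bernoulli analysis of \cite{agrawal2020finite} or an equivalent.
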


We apply a simple convexity argument to arrive at the same m.g.f.~bound for the general case of overlapping mixture components:
\begin{lemma}\label{lem:convergence of empirical distributions}
    Let $\bm{\pi}=\frac1m\sum_{j=1}^m\delta_{\bx_j}$, where each $\bx_j$ is sampled i.i.d.~from $\mu=\sum_{a=1}^k\rho(a)\mu_a$. Then we have the following moment generating function bound:
    \begin{align*}
        \Ex_{\bx_1,\dots,\bx_m}\sbra{\mathrm{exp}\pbra{\lambda \cdot \infdiv{\KL}{\rho_{\bm\pi}}{\rho}}} &\leq \pbra{\frac1{1-\lambda/m}}^{k-1}.
    \end{align*}
\end{lemma}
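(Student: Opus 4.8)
The plan is to reduce to Agrawal's moment generating function bound (\Cref{thm:agrawal}) via a conditioning argument combined with Jensen's inequality, exploiting the latent-variable structure of the mixture.

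First I would set up the natural coupling. Drawing $\bx_j \sim \mu$ is the same as first drawing a latent label $\ba_j \sim \rho$ and then $\bx_j \sim \mu_{\ba_j}$; by Bayes' rule this is in turn the same as drawing $\bx_j \sim \mu$ and then drawing $\ba_j$ from the posterior $\rho_{\bx_j}$. So I work on the probability space carrying both $(\bx_j,\ba_j)_{j\in[m]}$, with the pairs i.i.d. Two observations: (i) conditioned on $\bx_1,\dots,\bx_m$, the labels $\ba_j$ are independent with $\ba_j \sim \rho_{\bx_j}$, so (identifying a point mass on $[k]$ with its probability vector) $\Ex\sbra{\avg_{j\in[m]}\delta_{\ba_j} \mid \bx_1,\dots,\bx_m} = \avg_{j\in[m]}\rho_{\bx_j} = \rho_{\bm\pi}$, exactly the definition of $\rho_{\bm\pi}$; and (ii) marginally, the $\ba_j$ are i.i.d.\ from $\rho$.

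Next I would invoke convexity. The map $\nu \mapsto \infdiv{\KL}{\nu}{\rho}$ on the simplex over $[k]$ is convex, being the restriction of the jointly convex function $(\nu,\sigma)\mapsto \infdiv{\KL}{\nu}{\sigma}$; and $t \mapsto e^{\lambda t}$ is convex and nondecreasing for $\lambda \geq 0$, so $\nu \mapsto \exp\pbra{\lambda \cdot \infdiv{\KL}{\nu}{\rho}}$ is convex on the simplex. Applying the conditional Jensen inequality with this convex function and observation (i),
\[
    \exp\pbra{\lambda \cdot \infdiv{\KL}{\rho_{\bm\pi}}{\rho}} \leq \Ex\sbra{\exp\pbra{\lambda\cdot \infdiv{\KL}{\avg_{j\in[m]}\delta_{\ba_j}}{\rho}} \mid \bx_1,\dots,\bx_m}.
\]
Taking expectations over $\bx_1,\dots,\bx_m$ and using the tower property bounds $\Ex\sbra{\exp\pbra{\lambda \cdot \infdiv{\KL}{\rho_{\bm\pi}}{\rho}}}$ by $\Ex\sbra{\exp\pbra{\lambda\cdot \infdiv{\KL}{\avg_{j\in[m]}\delta_{\ba_j}}{\rho}}}$. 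By observation (ii) the $\ba_j$ are i.i.d.\ from $\rho$, so \Cref{thm:agrawal} bounds this last quantity by $\pbra{1/(1-\lambda/m)}^{k-1}$, which is the claim.

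I do not expect a genuine obstacle: the only points requiring care are verifying convexity of $\nu\mapsto\exp\pbra{\lambda \cdot \infdiv{\KL}{\nu}{\rho}}$ (immediate from joint convexity of $\KL$ and monotone convexity of $\exp$) and confirming that the conditional-expectation identity $\Ex[\avg_j\delta_{\ba_j}\mid \bx_1,\dots,\bx_m]=\rho_{\bm\pi}$ matches the definition of $\rho_{\bm\pi}$ used in \Cref{claim:intertropyasdivergence}; both are routine.
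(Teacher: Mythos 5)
Your proof is correct and is essentially identical to the paper's: both rewrite $\rho_{\bm\pi}$ as a conditional expectation $\Ex[\avg_j\delta_{\ba_j}\mid\bx_1,\dots,\bx_m]$ using the latent-label coupling, apply Jensen via convexity of $\nu\mapsto\exp(\lambda\,\infdiv{\KL}{\nu}{\rho})$, use that the $\ba_j$ are marginally i.i.d.\ from $\rho$, and finish with Agrawal's m.g.f.\ bound. Your write-up merely spells out the justification for the convexity and conditional-expectation steps in more detail than the paper does.
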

\begin{proof}
    For all $\lambda>0$ we have
    \begin{align*}
        \Ex_{\bx_1,\dots,\bx_m}\sbra{\mathrm{exp}\pbra{\lambda \cdot \infdiv{\KL}{\rho_{\bm{\pi}}}{\rho}}}&= \Ex_{\bx_1,\dots,\bx_m}\sbra{\mathrm{exp}\pbra{\lambda \cdot \infdiv{\KL}{\avg_{j\in[m]}\rho_{\bx_j}}{\rho}}}\\
        &=\Ex_{\bx_1,\dots,\bx_m}\sbra{\mathrm{exp}\pbra{\lambda \cdot \infdiv{\KL}{\Ex_{\ba_j\sim\rho_{\bx_j}}\avg_{j\in[m]} \delta_{\ba_j}}{\rho}}}\\
        &\leq \Ex_{\bx_1,\dots,\bx_m}\Ex_{\ba_j\sim\rho_{\bx_j}}\sbra{\mathrm{exp}\pbra{\lambda \cdot \infdiv{\KL}{\avg_{j\in[m]} \delta_{\ba_j}}{\rho}}}\\
        &=\Ex_{\ba_1,\dots,\ba_m\sim\rho}\sbra{\mathrm{exp}\pbra{\lambda \cdot \infdiv{\KL}{\avg_{j\in[m]} \delta_{\ba_j}}{\rho}}}.
    \end{align*}
    The inequality follows from the convexity of the KL-divergence in the left component and the exponential function when $\lambda>0$. Applying \Cref{thm:agrawal} completes the proof.
\end{proof}
We can transfer this m.g.f.~bound (and accordingly the tail bound) to that of a convex combination of these quantities over the $s$:
\begin{lemma}
    \label{lem:tailboundtransfer}
    Let $\bm{\pi}=\frac1m\sum_{j=1}^m\delta_{\bx_j}$, where the $\bx_j$ are sampled i.i.d.~from $\mu=\sum_{a=1}^k\rho(a)\mu_a$. Then we have the following tail bound for any $\bs$ whenever $\epsilon > \frac{k-1}{m}$:
    \begin{align*}
        \Pr_{\bx_1,\dots,\bx_m}\sbra{\Ex_{\bs}\sbra{\infdiv{\KL}{\rho_{P_{\bs}\bm\pi}}{\rho} > \varepsilon
        }} &\leq e^{-\epsilon m}\cdot \pbra{\frac{e\epsilon m}{k-1}}^{k-1}.
    \end{align*}
\end{lemma}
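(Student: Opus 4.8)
The plan is to combine the moment generating function bound from \Cref{lem:convergence of empirical distributions} with a convexity (Jensen) argument over the randomness of $\bs$, and then run the standard Chernoff-style optimization to extract a tail bound. First I would observe that the crucial structural fact is that $\rho_{P_{\bs}\bm\pi}$ is, for each fixed value of $s$, the empirical-posterior distribution associated to the distribution $P_s\bm\pi$ on $\Sigma^n$; but more importantly, I want to express $\rho_{P_s\bm\pi}$ directly in terms of the original samples $\bx_1,\dots,\bx_m$. Since $P_s$ acts linearly on distributions and $\bm\pi = \frac1m\sum_j \delta_{\bx_j}$, we have $P_s\bm\pi = \frac1m\sum_j P_s\delta_{\bx_j}$, and the posterior map $\pi \mapsto \rho_\pi$ is also linear (it is $\pi \mapsto \sum_x \pi(x)\rho_x$), so $\rho_{P_s\bm\pi} = \frac1m\sum_j \rho_{P_s\delta_{\bx_j}}$ is again an average of $m$ (random) distributions on $[k]$. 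The point is that each $\rho_{P_s\delta_{\bx_j}}$ is a mixture over $\rho_{\by}$ for $\by$ drawn from $P_s\delta_{\bx_j}$, and because Glauber dynamics for $\mu$ preserves $\mu$, running $P_s$ from a sample $\bx_j\sim\mu$ keeps the marginal equal to $\mu$; hence $\rho_{P_s\delta_{\bx_j}}$, after integrating over $\bx_j\sim\mu$ and over the chain's internal randomness, has the same law as $\rho_{\bx}$ for a single fresh $\bx\sim\mu$. So for each fixed $s$, the random variable $\infdiv{\KL}{\rho_{P_s\bm\pi}}{\rho}$ has a distribution that is dominated (in the m.g.f.\ sense) exactly as in \Cref{lem:convergence of empirical distributions}, by the same argument run there, giving $\E[\exp(\lambda\infdiv{\KL}{\rho_{P_s\bm\pi}}{\rho})] \leq (1-\lambda/m)^{-(k-1)}$ for $0\leq\lambda<m$.

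Next I would handle the expectation over $\bs$. Fix $\lambda \in [0,m)$. By Jensen's inequality applied to the convex function $\exp(\lambda\,\cdot\,)$ and then Fubini,
\begin{align*}
    \Ex_{\bx_1,\dots,\bx_m}\sbra{\exp\pbra{\lambda\cdot\Ex_{\bs}\sbra{\infdiv{\KL}{\rho_{P_{\bs}\bm\pi}}{\rho}}}}
    &\leq \Ex_{\bx_1,\dots,\bx_m}\Ex_{\bs}\sbra{\exp\pbra{\lambda\cdot\infdiv{\KL}{\rho_{P_{\bs}\bm\pi}}{\rho}}}\\
    &= \Ex_{\bs}\Ex_{\bx_1,\dots,\bx_m}\sbra{\exp\pbra{\lambda\cdot\infdiv{\KL}{\rho_{P_{\bs}\bm\pi}}{\rho}}}\\
    &\leq \Ex_{\bs}\sbra{\pbra{\frac1{1-\lambda/m}}^{k-1}} = \pbra{\frac1{1-\lambda/m}}^{k-1},
\end{align*}
where in the next-to-last line I used the $s$-fixed m.g.f.\ bound from the previous paragraph. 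Thus the averaged quantity $\Ex_{\bs}[\infdiv{\KL}{\rho_{P_{\bs}\bm\pi}}{\rho}]$ satisfies exactly the same m.g.f.\ bound as in \Cref{thm:agrawal} and \Cref{lem:convergence of empirical distributions}.

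Finally I would apply the standard Chernoff/Markov step to convert the m.g.f.\ bound into the stated tail bound. Writing $Z = \Ex_{\bs}[\infdiv{\KL}{\rho_{P_{\bs}\bm\pi}}{\rho}]$, we have for any $\lambda\in[0,m)$ that $\Pr[Z>\varepsilon] \leq e^{-\lambda\varepsilon}(1-\lambda/m)^{-(k-1)}$; optimizing over $\lambda$ (the optimal choice being $\lambda = m - \frac{k-1}{\varepsilon}$, which lies in $[0,m)$ precisely when $\varepsilon>\frac{k-1}{m}$, matching the hypothesis) yields $\Pr[Z>\varepsilon] \leq e^{-\varepsilon m}\pbra{\frac{e\varepsilon m}{k-1}}^{k-1}$, exactly as in the proof of \Cref{thm:agrawalmain} from \Cref{thm:agrawal}. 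This is the desired conclusion. I expect the only genuine subtlety — the ``main obstacle'' — to be the first paragraph: carefully justifying that for each fixed $s$ the random distribution $P_s\bm\pi$ feeds into the posterior map in a way that reproduces the setup of \Cref{lem:convergence of empirical distributions}, i.e.\ that applying the Glauber operator $P_s$ (which is $\mu$-stationary) to data sampled from $\mu$ does not change the relevant law. One clean way to see this is to note that $\rho_{P_s\bm\pi}$ is an average of $m$ i.i.d.\ copies of $\rho_{\bm y}$ where $\bm y \sim P_s\delta_{\bm x}$, $\bm x\sim\mu$, and that the pair-marginal of $\bm y$ is $\mu$; then the exact argument of \Cref{lem:convergence of empirical distributions} (convexity of KL in its first argument and of $\exp$) applies verbatim with $\bm x_j$ replaced by $\bm y_j$. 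Everything after that is routine. One should also remark that the event inside the probability in the statement should be read as $\{\Ex_{\bs}[\infdiv{\KL}{\rho_{P_{\bs}\bm\pi}}{\rho}] > \varepsilon\}$, the placement of the inequality inside $\Ex_{\bs}[\cdot]$ in the displayed statement being a typo.
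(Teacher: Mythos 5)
Your proposal is correct and follows the same three-step route as the paper's proof: a Jensen/Fubini step to pull the $\Ex_{\bs}$ outside the exponential, the fixed-$s$ m.g.f.\ bound inherited from \Cref{lem:convergence of empirical distributions} via $\mu$-stationarity of the Glauber chain, and the Chernoff optimization $\lambda = m - \tfrac{k-1}{\varepsilon}$ as in \Cref{thm:agrawalmain}. One small imprecision (present also in the paper's wording ``marginally distributed according to''): for fixed $s$, $\rho_{P_s\delta_{\bx_j}}$ does \emph{not} have the same law as $\rho_{\bx}$ for a fresh $\bx\sim\mu$ — it is the conditional expectation $\Ex_{\by_j\sim P_s\delta_{\bx_j}}\sbra{\rho_{\by_j}}$, a smoothed version — so one needs an additional application of Jensen (convexity of $u \mapsto e^{\lambda u}$ composed with $\infdiv{\KL}{\cdot}{\rho}$) to push this conditional expectation out before invoking the marginal law of $\by_j$; your closing remark about running the argument of \Cref{lem:convergence of empirical distributions} ``verbatim with $\bx_j$ replaced by $\by_j$'' captures this correctly even though the earlier ``same law'' phrasing is loose. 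Your observation that the inequality sign is misplaced inside $\Ex_{\bs}\sbra{\cdot}$ in the statement is also correct.
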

\begin{proof}
    In general, let $\bX=\Ex_{\bs}\sbra{\bX_{\bs}}$ where each $\bX_i$ is distributed identically. Fixing $\lambda \geq0$, we have:
    \begin{align*}
        \E\sbra{e^{\lambda \bX}} &= \E\sbra{e^{\lambda\Ex_{\bs}\sbra{\bX_{\bs}}}}
        \leq \E\sbra{\Ex_{\bs}\sbra{e^{\lambda \bX_{\bs}}}}
        = \Ex_{\bs}\sbra{\E\sbra{e^{\lambda \bX_{\bs}}}}
        = \E\sbra{e^{\lambda \bX_0}}.
    \end{align*}
    The inequality follows via convexity, and $\bX_0$ is a copy of $\bX_s$. 

    Set $\bX_s=\infdiv{\KL}{\rho_{P_s\bm\pi}}{\rho}=\infdiv{\KL}{\avg_{j\in[m]}\rho_{P_s\delta_{\bx_j}}}{\rho}$ for all $s$. Note that each $\bX_s$ is marginally distributed according to the distribution given by $\infdiv{\KL}{\avg_{j\in[m]}\rho_{\delta_{\bx_j}}}{\rho}$ since each $\bx_j\sim\mu$, and $\mu$ is stationary with respect to~$P$. Whenever $0\leq \lambda < m$ we apply the above and get the bound
    \begin{align*}
        \E\sbra{e^{\lambda \Ex_{\bs}\sbra{\infdiv{\KL}{\rho_{P_{\bs}\bm\pi}}{\rho}}}} &\leq \E\sbra{e^{\lambda \infdiv{\KL}{\rho_{\bm\pi}}{\rho}}} \underset{\text{\Cref{lem:convergence of empirical distributions}}}{\leq}\pbra{\frac1{1-\lambda/m}}^{k-1}.
    \end{align*}
    Taking $\lambda = m -\frac{k-1}{\varepsilon}$ as in \Cref{thm:agrawalmain} finishes the proof.
\end{proof}

Combining \Cref{lem:maintrick} and \Cref{lem:tailboundtransfer} immediately implies this formal version of \Cref{thm:sampling informal}:
\begin{theorem}
    \label{thm:weak mlsi and balance}
    Let $\mu = \sum_{a=1}^k \rho(a) \mu_a$ be a mixture distribution with each component satisfying a modified log-Sobolev inequality with constant $c^*$. Let $P_t$ be the continuous-time Markov operator induced by the Glauber dynamics for $\mu$ and $\boldsymbol{\pi} = \frac{1}{m} \sum_{j=1}^m \delta_{\bx_j}$ for $\bx_1, ..., \bx_m \sim \mu$. Then
    \begin{align*}
        \Pr_{\bx_1, ..., \bx_m}\sbra{\infdiv{KL}{P_t \boldsymbol{\pi}}{\mu} > \varepsilon} \leq \delta,
    \end{align*}
    for $m = \Theta(k/\varepsilon + \log(1/\delta)/\varepsilon)$ and $t = c^*n\cdot \Theta\left(\log\log(1/\min_x\mu_x) + \log(1/\epsilon)\right)$.
\end{theorem}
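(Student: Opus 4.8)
The plan is to combine the two main ingredients, \Cref{lem:maintrick} and \Cref{lem:tailboundtransfer}, in the case $\Phi(u) = u\log u$. First I would invoke \Cref{lem:maintrick} with this choice of $\Phi$ to obtain, for the data-based initialization $\bm\pi$,
\begin{align*}
    \infdiv{\KL}{P_t\bm\pi}{\mu} \leq (1-1/c^*n)^t \cdot \infdiv{\KL}{\bm\pi}{\mu} + \Ex_{\bs}\sbra{\ent_{\ba\sim\rho}\sbra{\E_{P_{\bs}\bm\pi}\sbra{\mu_{\ba}/\mu}}},
\end{align*}
where $\bs$ is a random variable supported on $[0,t]$ (note \Cref{rem:ATE to MLSI} is not even needed here since the hypothesis is stated directly as an MLSI with constant $c^*$, i.e.\ $\tfrac{1}{c^*n}$-MLSI, matching the contraction rate $(1-1/c^*n)$). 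By \Cref{claim:intertropyasdivergence}, the inter-component entropy term equals $\Ex_{\bs}[\infdiv{\KL}{\rho_{P_{\bs}\bm\pi}}{\rho}]$, which is exactly the quantity controlled by \Cref{lem:tailboundtransfer}. So the strategy is: bound the second term by $\varepsilon/2$ with probability $\geq 1-\delta$ using \Cref{lem:tailboundtransfer}, then choose $t$ large enough that the first term is also $\leq \varepsilon/2$, and add.

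For the second term, I would apply \Cref{lem:tailboundtransfer} with threshold $\varepsilon/2$; this requires $\varepsilon/2 > (k-1)/m$, which holds once $m = \Theta(k/\varepsilon + \log(1/\delta)/\varepsilon)$ with a suitable constant. The tail bound gives $\Pr[\Ex_{\bs}[\infdiv{\KL}{\rho_{P_{\bs}\bm\pi}}{\rho}] > \varepsilon/2] \leq e^{-\varepsilon m/2}(e\varepsilon m/(2(k-1)))^{k-1}$, and a routine computation shows this is $\leq\delta$ for the stated $m$: writing $\varepsilon m/2 = (k-1) + c\log(1/\delta) + \Theta(k)$ and using $e^{-x}(ex/(k-1))^{k-1}\leq e^{-(x-(k-1))}$ type estimates, the ``$+\,\Theta(k)$'' slack absorbs the polynomial-in-$m$ prefactor while the $\log(1/\delta)/\varepsilon$ term drives the bound below $\delta$. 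This is the one place requiring a small amount of care but it is essentially the same calculation that derives \Cref{thm:agrawalmain} from \Cref{thm:agrawal}.

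For the first term, I need $(1-1/c^*n)^t \cdot \infdiv{\KL}{\bm\pi}{\mu} \leq \varepsilon/2$. Using $(1-1/c^*n)^t \leq e^{-t/c^*n}$, it suffices to take $t/c^*n \geq \log(2\,\infdiv{\KL}{\bm\pi}{\mu}/\varepsilon)$. The crude bound $\infdiv{\KL}{\bm\pi}{\mu} \leq \log(1/\min_x\mu(x))$ — valid because $\bm\pi$ is supported on atoms and $\infdiv{\KL}{\bm\pi}{\mu} = \sum_x \bm\pi(x)\log(\bm\pi(x)/\mu(x)) \leq \sum_x \bm\pi(x)\log(1/\mu(x)) \leq \log(1/\min_x\mu(x))$, since $\bm\pi(x)\le 1$ — then gives $t = c^*n\cdot\Theta(\log\log(1/\min_x\mu(x)) + \log(1/\varepsilon))$, matching the claim. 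Finally, by a union bound, with probability $\geq 1-\delta$ both terms are $\leq \varepsilon/2$, so $\infdiv{\KL}{P_t\bm\pi}{\mu}\leq\varepsilon$. The main obstacle, such as it is, is verifying the $m$ computation cleanly so that the additive $\Theta(k)$ and $\Theta(\log(1/\delta))$ terms in $m$ correctly absorb both the polynomial prefactor in Agrawal's tail bound and the $\delta$ target simultaneously; everything else is assembly of the quoted lemmas.
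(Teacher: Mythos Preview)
Your proposal is correct and follows essentially the same approach as the paper: invoke \Cref{lem:maintrick}, bound the contraction term deterministically via $\infdiv{\KL}{\bm\pi}{\mu}\leq\log(1/\min_x\mu(x))$ and the choice of~$t$, and control the inter-component term with \Cref{lem:tailboundtransfer} (via \Cref{claim:intertropyasdivergence}). One stylistic nit: no union bound is needed at the end, since the first term is bounded by $\varepsilon/2$ \emph{deterministically} once $t$ is fixed; the only probabilistic event is the tail bound on the second term.
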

\begin{proof}
    By \Cref{lem:maintrick}, the fact that $\infdiv{\KL}{\pi}{\mu}\leq \log(1/\min_x\mu(x))$, and the setting of $t$ it suffices to show that for the claimed sample complexity $m$ we have
    \begin{align*}
        \Pr_{\bx_1,\dots,\bx_m}\sbra{\Ex_{\bs}\sbra{\infdiv{\KL}{\rho_{P_{\bs}\bm{\pi}}}{\rho}} > 0.5\epsilon} &\leq \delta.
    \end{align*}
    The use of the correct constant in the setting of $m$ and \Cref{lem:convergence of empirical distributions} imply the desired bound.
\end{proof}

\section{Identity-Testing with Coordinate Conditional Sampling}\label{sec:application testing}
Our second application deals with the problem of identity-testing distributions on $\Sigma^n$, but with access to coordinate conditional samples from the target distribution $\mu$, which is the mixture of some number of distributions sampling ATE. We prove \Cref{thm:testing informal}.

\paragraph{Proof Overview.} As in \Cref{sec:application sampling}, if $\mu$ itself satisfies ATE then our result follows, and all that needs to be done is to handle the case where inter-component entropy exists. Note that \Cref{alg:identity test with coordinate oracle} is more or less the same as the algorithm of \cite{blanca2023complexity}, except for \Cref{weight verification reject} (and the use of an improved KL identity-tester). To motivate the design of our algorithm, consider a distribution $\pi$ that is far from $\mu$ in KL-divergence. Then, by the chain rule (\Cref{lem:chain rule}), we have that either $\Ex_{\ba\sim\rho}{\ent_{\mu_{\ba}}\sbra{\frac{\pi}{\mu}}}$ is large, or $\ent_{\ba\sim\rho}{\Ex_{\mu_{\ba}}\sbra{\frac{\pi}{\mu}}}$ is large. Intuitively, either the intra-component entropy is large, or the inter-component entropy is large. 

In the first case, the algorithm of \cite{blanca2023complexity} rejects $\pi$ with high probability. This does not directly follow from the guarantee of \cite{blanca2023complexity}, but another application of the chain rule allows us to deduce this. See \Cref{sec:local reject} for the analysis of this part of the algorithm..

In the second case, the inter-component entropy is large. However, it may be the case that the intra-component entropy is small. For example, $\pi$ could be a mixture of the $\mu_a$, but with incorrect mixture weights. In this case, it is not clear how to use the coordinate oracle to detect this discrepancy in a generic way, especially if there is some intra-component entropy. However, in this case, we note that we can use posterior sampling of $\ba\sim \rho|\bx$, where $\bx$ are samples from $\pi$, to infer what the effective weights on each component are. Here $\rho|x$ is the distribution on $[k]$ where $(\rho|x)(a) = \frac{\rho(a)\mu_a(x)}{\mu(x)}$.

More formally, the inter-component entropy is equal to the KL-divergence between this posterior distribution when $\bx\sim\pi$ and when $\bx\sim\mu$. Given knowledge of the true distribution $\rho$, we can again use our base KL-divergence tester from \Cref{lem:KL tester} to reject this $\pi$. This is the content of \Cref{local reject} in \Cref{alg:identity test with coordinate oracle}, analyzed in \Cref{sec:weight verification reject}.

As a subroutine, we will need an identity-testing algorithm with respect to KL-divergence error~$\eps$, for $\eta$-balanced distributions on domains of size~$d$ (we will use this algorithm for the cases $d=k$ and $d=|\Sigma|$). 
In~\cite{blanca2023complexity}, two such testers were given and analyzed; a main one with sample complexity $O\pbra{\min\cbra{\frac{\sqrt{d}\ln(1/\eta)}{\eps^2}, \frac{1}{\sqrt{\eta} \eps}}},$\footnote{We remark that the first expression in the min essentially always dominates. Even when its $\eps$ dependence is made linear, as in our \Cref{lem:KL tester}, the second expression is smaller only in the regime $\frac{1}{d \log^2 d} \lesssim \eta \leq \frac{1}{d}$, and then only by a $\log d$ factor.\label{foot}} and one for the special case of $k = 2$ with sample complexity $O\pbra{\frac{\ln(1/\eta)}{\eps}}$.
The main effort there was to get $\log(1/\eta)$ dependence, rather than $1/\eta$ dependence; however, the general case suffers from a quadratic dependence on~$1/\eps$.  Here we note a tester with both linear dependence on~$1/\eps$ and logarithmic dependence on~$1/\eta$ can be recovered by combining two known results from the literature; we prove this in \Cref{sec:deferred proofs}:

\begin{theorem}\label{lem:KL tester}
    Let $q$ be a distribution over a universe~$D$ of size~$d$, with each outcome in $D$ having  probability at least~$\eta$ under~$q$. There is an algorithm $\textsc{KL-Test}(p,q,\eps,\delta)$ that given input $\epsilon,\delta>0$ and access to samples from an unknown distribution $p$ on $D$, draws
    \begin{align*}
        O\pbra{\frac{\sqrt{d}\cdot \log\pbra{1/\eta}\cdot \log\pbra{1/\delta}}{\epsilon}}
    \end{align*}
    samples from $p$ and has the following performance guarantee:
    \begin{enumerate}
        \item If $p=q$ then $\textsc{KL-Test}(p,q,\epsilon,\delta)$ accepts with probability at least $1-\delta$.
        \item If $\infdiv{\KL}{p}{q}\geq \epsilon$ then $\textsc{KL-Test}(p,q,\epsilon)$ accepts with probability at most $\delta$.
    \end{enumerate}
\end{theorem}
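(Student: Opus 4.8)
The plan is to reduce the KL-divergence identity-testing problem to a combination of two standard building blocks: an $\ell_2$-based identity tester, and a reduction from closeness in KL-divergence to closeness in a weighted $\ell_2$-like metric under the balancedness hypothesis. The key observation is that for $\eta$-balanced reference distributions, KL-divergence can be sandwiched by $\chi^2$-divergence from below (always true: $\KL \leq \chi^2$) and controlled from above by $\chi^2$ up to a $\log(1/\eta)$ factor, since for each coordinate $-\log(p(i)/q(i))$ behaves like $1 - p(i)/q(i)$ up to multiplicative factors depending on how small $p(i)/q(i)$ can get, and $p(i)/q(i) \geq 0$ with $q(i) \geq \eta$ gives the needed range. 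Concretely, I would first argue that $\infdiv{\chi^2}{p}{q} \lesssim \infdiv{\KL}{p}{q}$ is \emph{not} what we want — rather, we want: if $\infdiv{\KL}{p}{q} \geq \epsilon$ then $\infdiv{\chi^2}{p}{q} \gtrsim \epsilon / \log(1/\eta)$, so that detecting an $\Omega(\epsilon/\log(1/\eta))$ gap in $\chi^2$ suffices.

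First I would invoke a known $\chi^2$/$\ell_2$ identity tester — e.g.\ the tester of Acharya--Daskalakis--Kamath or Valiant--Valiant, or Chan--Diakonikolas--Valiant--Valiant — which, for a known $q$ over a domain of size $d$, distinguishes $p = q$ from $\infdiv{\chi^2}{p}{q} \geq \epsilon'$ using $O(\sqrt{d}/\epsilon')$ samples (with the standard caveat that one truncates or buckets the small-probability elements of $q$; here balancedness $q(i) \geq \eta$ makes this clean, at worst costing logarithmic factors, or one uses the variant that natively handles $\chi^2$). Setting $\epsilon' = \Theta(\epsilon/\log(1/\eta))$ gives sample complexity $O(\sqrt{d}\log(1/\eta)/\epsilon)$; boosting the success probability to $1-\delta$ by the standard median-of-$O(\log(1/\delta))$-repetitions trick yields the claimed $O(\sqrt{d}\,\log(1/\eta)\,\log(1/\delta)/\epsilon)$ bound. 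The completeness direction is immediate: if $p = q$ the $\chi^2$ tester accepts.

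For soundness I need the analytic lemma: if $q$ is $\eta$-balanced and $\infdiv{\KL}{p}{q} \geq \epsilon$, then $\infdiv{\chi^2}{p}{q} \geq \epsilon / \log(1/\eta)$ (up to constants). The cleanest route is pointwise: write $t_i = p(i)/q(i) \geq 0$ and note $\infdiv{\KL}{p}{q} = \sum_i q(i)\,\phi(t_i)$ with $\phi(t) = t\log t - t + 1$, while $\infdiv{\chi^2}{p}{q} = \sum_i q(i)\,(t_i-1)^2$. I claim $\phi(t) \leq C \log(1/\eta) \cdot (t-1)^2$ for all $t$ in the feasible range. For $t$ bounded away from $0$ and $\infty$ this is the standard comparison $\phi(t) \asymp (t-1)^2$. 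The potential worry is large $t$ (where $\phi$ grows like $t\log t$ but $(t-1)^2$ grows like $t^2$, so $\chi^2$ dominates — fine) and small $t$: as $t \to 0$, $\phi(t) \to 1$ while $(t-1)^2 \to 1$, so the ratio stays bounded — also fine; the subtlety is only that $q(i) \geq \eta$ bounds how much mass a single coordinate can contribute, but actually one does not even need this for the pointwise bound. So in fact the pointwise inequality $\phi(t) \leq C(t-1)^2$ holds for \emph{all} $t \geq 0$ with an \emph{absolute} constant $C$ — wait, that is false at $t$ slightly above $0$? Check $t = 1/2$: $\phi = \tfrac12\log\tfrac12 + \tfrac12 \approx 0.153$, $(t-1)^2 = 0.25$; ratio $< 1$. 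So indeed $\phi(t) \le (t-1)^2$ for $t \le 1$ and $\phi(t) \le (t-1)^2$ fails for large $t$... no: $t=10$: $\phi = 10\log 10 - 9 \approx 14.0$, $(t-1)^2 = 81$; still $\phi \leq \chi^2$ termwise. In fact $\phi(t) \le (t-1)^2$ for all $t\ge 0$, which is just the well-known $\KL \le \chi^2$. Then where does $\log(1/\eta)$ enter? It does \emph{not} in this direction — so I actually need the harder inequality $\infdiv{\KL}{p}{q} \geq c\cdot \epsilon$ implies $\infdiv{\chi^2}{p}{q}$ large, which is genuinely where balancedness is used, because $\chi^2$ can be much larger than KL (that is the bad direction), so I need the reverse: bounding $\chi^2$ contributions by KL contributions requires $t_i \leq 1/\eta$... but $t_i = p(i)/q(i)$ can be as large as $1/q(i) \le 1/\eta$ only if $p(i) \le 1$, trivially true, so $t_i \le 1/\eta$ always. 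Hence for each $i$, $(t_i - 1)^2 \le \max(t_i, 1)^2 \le$ ... and comparing $(t-1)^2$ to $\phi(t)$ on $[0, 1/\eta]$: the ratio $(t-1)^2/\phi(t)$ is maximized around $t = 1/\eta$ where it is $\asymp (1/\eta)^2 / ((1/\eta)\log(1/\eta)) = (1/\eta)/\log(1/\eta)$. That's a factor $1/(\eta\log(1/\eta))$, far worse than $\log(1/\eta)$. So the pointwise approach is too lossy and the real proof must instead reduce KL-testing not to $\chi^2$ but to a more delicate statistic.

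Given this, the main obstacle — and the actual content of the theorem — is getting only a $\log(1/\eta)$ dependence rather than $1/\eta$. The resolution, which I would carry out by "combining two known results from the literature" as the excerpt advertises, is to \emph{not} go through $\chi^2$ at all, but instead: (i) use a tester that directly estimates (or tests closeness in) KL or Hellinger to additive accuracy; Hellinger-squared satisfies $\infdiv{H^2}{p}{q} \leq \infdiv{\KL}{p}{q}$ always, and conversely $\infdiv{\KL}{p}{q} \leq (2 + \log(1/\eta)) \cdot \infdiv{H^2}{p}{q}$ when $q$ is $\eta$-balanced (this is the standard reverse comparison, and here the $\log(1/\eta)$ enters correctly because the reverse-Pinsker-type bound involves $\log(\sup p(i)/q(i)) \le \log(1/\eta)$, and Hellinger — unlike $\chi^2$ — has the right scaling at small probabilities). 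Then (ii) apply a known $\sqrt{d}/\epsilon'$-sample Hellinger-closeness identity tester (e.g.\ from Daskalakis--Kamath--Wright or the folklore bucketing reduction to $\ell_2$), with $\epsilon' = \Theta(\epsilon/\log(1/\eta))$. Completeness is again immediate; soundness follows since $\infdiv{\KL}{p}{q} \geq \epsilon$ forces $\infdiv{H^2}{p}{q} \geq \epsilon/(2+\log(1/\eta)) \geq \epsilon'$, which the Hellinger tester detects. Finally, amplify to confidence $1-\delta$ by $O(\log(1/\delta))$ independent repetitions and a majority vote, for total sample complexity $O(\sqrt{d}\,\log(1/\eta)\,\log(1/\delta)/\epsilon)$. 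I would defer the two folklore ingredients — the KL$\leftrightarrow H^2$ comparison under balancedness, and the off-the-shelf Hellinger identity tester — to citations, and the write-up to \Cref{sec:deferred proofs}.
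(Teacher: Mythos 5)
Your final route—reducing to a $\sqrt{d}/\epsilon'$-sample Hellinger identity tester (Daskalakis--Kamath--Wright) via the reverse comparison $\infdiv{\KL}{p}{q} \leq O(\log(1/\eta)) \cdot \infdiv{H^2}{p}{q}$ for $\eta$-balanced $q$, with $\epsilon' = \Theta(\epsilon/\log(1/\eta))$—is exactly the paper's proof, down to the same reference and the same comparison inequality. Your initial $\chi^2$ exploration is correctly diagnosed as giving $1/\eta$ rather than $\log(1/\eta)$ and abandoned; the only cosmetic difference is that the paper uses the $\log(1/\delta)$ dependence already built into the cited tester rather than amplifying by majority vote, which yields the identical bound.
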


\begin{remark}
    The \cite{blanca2023complexity} work shows that in \Cref{lem:KL tester}, we can actually take the minimum of the sample complexity with $O(\frac{1}{\sqrt{\eta} \eps})$.
    For simplicity of exposition, we will not carry around this `min' in our subsequent complexity bounds (for the reasons  described in \Cref{foot}).
\end{remark}

With this tester in hand, we may state our algorithm.

\begin{algorithm}[H]
    \addtolength\linewidth{-5.5ex}
    \vspace{0.5em}
    \textbf{Input:} Coordinate Oracle and General Oracle access (See \Cref{remark:don't need full coordinate oracle}) to distribution $\pi$ on $\Sigma^n$, known mixing weights $\rho(1), \dots, \rho(k)$, descriptions of $c^*$-ATE distributions $\mu_1, \dots, \mu_k$ on $\Sigma^n$, and the assumption that distribution $\mu=\sum_{a=1}^k \rho(a) \mu_a$ is $\eta$-balanced.
    
    \textbf{Output:} ``Accept'' or ``Reject''.
    
    \medskip
    
    $\textsc{Product-Set-KL-Test}(\pi, \mu, \epsilon)$:
    \begin{enumerate}
        \item\label{local reject} Independently draw
        $
            T_1 = O\pbra{\frac{c^*n}{\epsilon}}
        $
        pairs $(\bx, \bi)$, where $\bx\sim \pi$ and $\bi\in [n]$ is uniformly random. For each pair $(\bx,\bi)$, reject if $\textsc{KL-Test}\pbra{\pi|_{\bx_{\setminus \bi}},\mu|_{\bx_{\setminus \bi}},\bm{\theta},0.05\cdot \frac1{T_1}}$ rejects, where
        $
            \bm{\theta}\sim \mathrm{Unif}\pbra{\sbra{0.05\epsilon/c^*,\log(1/\eta)}}.
        $
        If the total number of Coordinate Oracle calls needed to perform these calls to $\textsc{KL-Test}$ exceeds 
        \begin{align*}
            T &= 100\cdot T_1\cdot \underbrace{c_{\textsc{KL-Test}}\cdot  \sqrt{\abs{\Sigma}}\cdot \log(1/\eta)\cdot \log(20T_1) \cdot10\log\pbra{\frac{\log(1/\eta)}{\epsilon/c^*}}}_{\text{bounds expected number of Coordinate Oracle uses per call to $\textsc{KL-Test}$}},
        \end{align*}
        then reject. Here $c_{\textsc{KL-Test}}$ is the constant hidden in the $O$-notation of \Cref{lem:KL tester}.
        \item\label{weight verification reject} 
        Consider the distribution $\rho_\pi$ on $[k]$ defined by drawing $\bx \sim \pi$ and outputting~$a \in [k]$ with probability $\frac{\rho(a)\mu_a(\bx)}{\mu(\bx)}$. (Note the algorithm can simulate draws from~$\rho_\pi$ using the General Oracle for~$\pi$.)
        Reject if $\textsc{KL-Test}\pbra{\rho_\pi,\rho,0.5\epsilon,0.1}$ rejects.
        \item Accept.
    \end{enumerate}
    \caption{Identity-testing of $\mu$ with Coordinate Oracle and General Oracle access.}
    \label{alg:identity test with coordinate oracle}
\end{algorithm}

We now prove that \Cref{alg:identity test with coordinate oracle} is a good identity-tester for the distribution $\mu$, deferring proofs of the auxiliary lemmas \Cref{lem:local reject} and \Cref{lem:weight verification reject} to \Cref{sec:proofs for testing}. 
\begin{theorem}[\Cref{thm:testing informal}, restated]\label{thm:main}
    Let $\mu=\sum_{a = 1}^k \rho(a)\mu_a$ be $\eta$-balanced, where $\rho^*=\min_a\rho(a)$ and each $\mu_a$ satisfies $c^*$-ATE. Then \Cref{alg:identity test with coordinate oracle} uses
    \begin{align*}
         \underbrace{O\pbra{\frac{c^*n}{\epsilon} \cdot \log^2\pbra{\frac{c^*n}{\epsilon}} \cdot \sqrt{\abs{\Sigma}}\cdot \log(1/\eta)\cdot \log\log(1/\eta)}}_{\text{from \Cref{local reject}}}+\underbrace{O\pbra{\frac{\sqrt{k}\cdot \log(1/\rho^*)}{\epsilon}}}_{\text{from \Cref{weight verification reject}}}
    \end{align*}
    calls to the General Oracle and Coordinate Oracle and satisfies the following performance guarantees:
    \begin{enumerate}
        \item If $\pi=\mu$ then $\textsc{Coordinate-Oracle-Test}(\mu,\pi,\epsilon)$ rejects with probability at most $0.4$.
        \item If $\infdiv{\KL}{\pi}{\mu}\geq\epsilon$ then $\textsc{Coordinate-Oracle-Test}(\mu,\pi,\epsilon)$ rejects with probability at least~$0.6$.
    \end{enumerate}
\end{theorem}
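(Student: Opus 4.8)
The backbone is the chain rule for entropy (\Cref{lem:chain rule}) applied to the density $f=\pi/\mu$. Together with \Cref{claim:intertropyasdivergence} it gives the exact decomposition
\[
\infdiv{\KL}{\pi}{\mu}=\ent_\mu\sbra{\pi/\mu}=\infdiv{\KL}{\rho_\pi}{\rho}+\Ex_{\ba\sim\rho}\sbra{\ent_{\mu_{\ba}}\sbra{\pi/\mu}}
\]
of the global divergence into an \emph{inter-component} term and an \emph{intra-component} term, so whenever $\infdiv{\KL}{\pi}{\mu}\ge\epsilon$ at least one of the two is $\ge\epsilon/2$. \Cref{local reject} is designed to reject when the intra-component term is large and \Cref{weight verification reject} when the inter-component term is large. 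Concretely, the plan is to prove two lemmas: \Cref{lem:local reject}, that \Cref{local reject} rejects with probability at least $0.9$ whenever $\Ex_{\ba\sim\rho}[\ent_{\mu_{\ba}}[\pi/\mu]]\gtrsim\epsilon$, and with probability at most a small constant when $\pi=\mu$; and \Cref{lem:weight verification reject}, that \Cref{weight verification reject} rejects with probability at least $0.9$ when $\infdiv{\KL}{\rho_\pi}{\rho}\ge\epsilon/2$ and at most $0.1$ when $\pi=\mu$.

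Granting these, the theorem is a short case analysis with generous constants. \emph{Soundness:} if $\infdiv{\KL}{\pi}{\mu}\ge\epsilon$ then either $\infdiv{\KL}{\rho_\pi}{\rho}\ge\epsilon/2$, and \Cref{lem:weight verification reject} — which is just \Cref{lem:KL tester} invoked with universe $[k]$, balance parameter $\rho^*$, KL error $0.5\epsilon$, and confidence $0.1$ — makes \Cref{weight verification reject} reject with probability $\ge 0.9$; or $\Ex_{\ba\sim\rho}[\ent_{\mu_{\ba}}[\pi/\mu]]\ge\epsilon/2$, and \Cref{lem:local reject} makes \Cref{local reject} reject with probability $\ge 0.9$; in either case the algorithm rejects with probability $\ge 0.6$. \emph{Completeness:} if $\pi=\mu$ then every conditional $\pi|_{x_{\setminus i}}$ equals $\mu|_{x_{\setminus i}}$ and $\rho_\pi=\rho$, so a union bound over the $T_1$ calls to $\textsc{KL-Test}$ in \Cref{local reject} (each accepting except with probability $0.05/T_1$, by guarantee~1 of \Cref{lem:KL tester}), a Markov bound showing the coordinate-oracle cap $T$ is rarely reached, and guarantee~1 of \Cref{lem:KL tester} for \Cref{weight verification reject}, together show the algorithm rejects with probability well below $0.4$.

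The substantive work is \Cref{lem:local reject}, and it amounts to porting the local-tester analysis of \cite{blanca2023complexity} to the present setting, where $\mu$ need not itself satisfy ATE. Two adaptations are needed. First, the \cite{blanca2023complexity} tester certifies, via its calls to $\textsc{KL-Test}$ on the conditionals $\pi|_{\bx_{\setminus\bi}}$, a lower bound on the aggregate local divergence $\sum_{i\in[n]}\Ex_{\bx\sim\pi}\sbra{\infdiv{\KL}{\pi|_{\bx_{\setminus i}}}{\mu|_{\bx_{\setminus i}}}}$; this aggregate equals $\calL_\mu[\pi/\mu]$ — an identity obtained by expressing $\mu|_{x_{\setminus i}}$ through $\mu$, $\pi$, and their $i$-th marginals and applying $1$-homogeneity of entropy (\Cref{fact:1-homogeneity}) — and \Cref{lem:local statistic to global statistic} lower-bounds it by $\tfrac1{c^*}\Ex_{\ba\sim\rho}[\ent_{\mu_{\ba}}[\pi/\mu]]\gtrsim\epsilon/c^*$. (The proof of \Cref{lem:local statistic to global statistic} is the ``second'' use of the chain rule: apply it pointwise to $\mu|_{x_{\setminus i}}=\sum_a(\cdots)\,\mu_a|_{x_{\setminus i}}$, discard the nonnegative inter-component term, then use componentwise ATE.) Thus one lands exactly in the regime in which the \cite{blanca2023complexity} analysis of the local rejection step applies — their \emph{theorem} cannot be cited since it assumes $\mu$ is ATE, but their \emph{argument} only ever uses this aggregate lower bound. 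Second, because $\mu$ is not ATE there is no a~priori bound on how many coordinate-oracle samples the $\textsc{KL-Test}$ calls will consume; this is why the error threshold fed to $\textsc{KL-Test}$ is randomized, $\bm{\theta}\sim\mathrm{Unif}([\cdot,\log(1/\eta)])$ (the upper endpoint being an a~priori bound on any local KL-divergence of an $\eta$-balanced $\mu$), which (i) lets \cite{blanca2023complexity}'s bucketing argument locate the scale at which the aggregate divergence sits, and (ii) makes the \emph{expected} coordinate-oracle cost per call only polylogarithmic, so that the hard cap $T$ — set a further $\gtrsim\log(1/\eta)$ factor above the expectation — is reached only with negligible probability when $\pi=\mu$ (while for soundness, aborting can only help).

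The sample complexity is then read off the algorithm: \Cref{local reject} makes at most $T=O\big(\tfrac{c^*n}{\epsilon}\cdot\log^2(\tfrac{c^*n}{\epsilon})\cdot\sqrt{|\Sigma|}\cdot\log(1/\eta)\cdot\log\log(1/\eta)\big)$ coordinate-oracle calls by construction (after substituting $T_1=O(c^*n/\epsilon)$ and bounding the remaining logarithmic factors by $O(\log(c^*n/\epsilon)+\log\log(1/\eta))$), and \Cref{weight verification reject} makes $O(\sqrt{k}\log(1/\rho^*)/\epsilon)$ general-oracle calls by \Cref{lem:KL tester}; summing gives the stated bound. The main obstacle is \Cref{lem:local reject} — simultaneously carrying the \cite{blanca2023complexity} local-tester analysis over to a non-ATE target via the two chain-rule applications, and showing that the randomized-threshold and budget-cap mechanism keeps the completeness error, the soundness probability, and the sample cost all within their claimed ranges.
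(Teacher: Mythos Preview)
Your proposal is correct and takes essentially the same approach as the paper: the chain-rule split into inter- and intra-component entropy, handled respectively by \Cref{lem:weight verification reject} (just \Cref{lem:KL tester} applied to $\rho_\pi$ versus~$\rho$ via \Cref{claim:intertropyasdivergence}) and \Cref{lem:local reject} (via \Cref{lem:local statistic to global statistic}, the $1$-homogeneity identity \Cref{lem:conditional flip from 1-homogeneity}, and the random-threshold analysis), with completeness by union bound plus Markov on the cap~$T$. The paper's version of the random-threshold analysis computes $\Pr[\mathbf{Y}\ge\bm{\theta}]$ by direct integration over~$\bm{\theta}$ rather than bucketing, and the cap~$T$ is set at a constant factor ($100$) above a slightly loose expected-cost bound so Markov gives failure probability~$\le 0.01$ --- cosmetic differences from your plan.
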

\begin{proof}
    Recalling that $T_1 = O\pbra{\frac{c^*n}{\epsilon}}$, \Cref{local reject} contributes at most the following amount of calls to the oracles: 
    \begin{align*}
        T & = 1000\cdot c_{\textsc{KL-Test}}\cdot T_1\cdot \sqrt{\abs{\Sigma}}\cdot \log(1/\eta)\cdot \log(20T_1) \cdot\log\pbra{\frac{c^*\log(1/\eta)}{\epsilon}}.\\
        &= O\pbra{\frac{c^*n}{\epsilon}} \cdot\log\pbra{\frac{c^*n}{\epsilon}}\cdot \sqrt{\abs{\Sigma}}\cdot\log(1/\eta)\cdot \log\pbra{\frac{c^*\log(1/\eta)}{\epsilon}}\\
        &= O\pbra{\frac{c^*n}{\epsilon} \cdot \log^2\pbra{\frac{c^*n}{\epsilon}} \cdot \sqrt{\abs{\Sigma}}\cdot \log(1/\eta)\cdot \log\log(1/\eta)}.
    \end{align*}
    \Cref{weight verification reject} in \Cref{alg:identity test with coordinate oracle} uses the following amount of oracle calls:
    \begin{align*}
        {O}\pbra{\frac{\sqrt{k}\cdot \log(1/\rho^*)}{\epsilon}}.
    \end{align*}
    Summing these gives the final number of oracle calls. 

    Now we bound the failure probabilities. Suppose that $\pi=\mu$ so that $\infdiv{\KL}{\pi}{\mu}=0$. Then the probability that \Cref{local reject} in \Cref{alg:identity test with coordinate oracle} rejects is at most $0.1$ by \Cref{lem:local reject}. The probability that \Cref{weight verification reject} in \Cref{alg:identity test with coordinate oracle} rejects is at most $0.1$ by \Cref{lem:weight verification reject}. Therefore, $\textsc{KL-Test}(\mu,\pi,\epsilon)$ accepts with probability at least $0.8$.

    Now suppose that $\infdiv{\KL}{\pi}{\mu}=\ent_{\mu}\sbra{\frac{\pi}{\mu}}\geq \epsilon$. Using the chain rule (\Cref{lem:chain rule}), we have
    \begin{align*}
        \ent_{\mu}\sbra{\frac{\pi}{\mu}}=\Ex_{\ba\sim \rho}\sbra{\ent_{\mu_{\ba}}\sbra{\frac{\pi}{\mu}}}+\ent_{\ba\sim \rho}\sbra{\Ex_{\mu_{\ba}}\sbra{\frac{\pi}{\mu}}},
    \end{align*}
    and we find that one of the two summands on the right-hand side is at least $0.5\epsilon$. In the first case, we have $\Ex_{\ba\sim \rho}\sbra{\ent_{\mu_{\ba}}\sbra{\frac{\pi}{\mu}}}\geq 0.5\epsilon$ and then \Cref{lem:local reject} shows that \Cref{local reject} rejects with probability at least $0.9$. Otherwise if $\ent_{\ba\sim \rho}\sbra{\Ex_{\mu_{\ba}}\sbra{\frac{\pi}{\mu}}}\geq0.5\epsilon$ then \Cref{lem:weight verification reject} shows that \Cref{weight verification reject} rejects with probability at least $0.9$.
\end{proof}

\subsection{Proofs of \Cref{lem:local reject,lem:weight verification reject}}\label{sec:proofs for testing}
In this section we prove \Cref{lem:local reject,lem:weight verification reject} which were the main tools needed in our application to identity-testing.

As in \cite{blanca2023complexity}, our sample complexity has a dependence on the balancedness of our visible distributions:
\begin{definition}\label{def:balance}
    We say that a distribution $\mu$ on $\Sigma^n$ is \emph{$\eta$-balanced} if for all $x\in \Sigma^n$ we have that the distribution $\mu|_{x_{\setminus i}}$ has minimum probability $\eta$.
\end{definition}


We always regard the distribution $\mu$ on $\Sigma^n$ as a mixture of distributions $\mu=\sum_{a = 1}^k \rho(a)\mu_a$, where each $\mu_a$ satisfies approximate tensorization of entropy with constant $c_a$. Moreover, we assume that $\mu$ is $\eta$-balanced. The following \Cref{fact:eta-balance min probability} also shows that $\mu$ is fully supported whenever $\eta >0$:
\begin{fact}\label{fact:eta-balance min probability}
    Let $\mu$ be an $\eta$-balanced distribution on $\Sigma^n$. Then for all $x\in\Sigma^n$, we have $\mu(x)\geq \eta^n$.
\end{fact}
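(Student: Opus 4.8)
The plan is to prove the bound by telescoping through the marginal distributions of $\mu$ on prefixes of coordinates. For $0\le j\le n$ let $\mu^{(j)}$ denote the marginal of $\mu$ on the first $j$ coordinates, so that $\mu^{(n)}=\mu$ and $\mu^{(0)}\equiv 1$. The key one-step estimate I would establish is
\[
    \mu^{(j)}(x_1,\dots,x_j)\ \ge\ \eta\cdot\mu^{(j-1)}(x_1,\dots,x_{j-1})
\]
for every $j\in[n]$ and every prefix $(x_1,\dots,x_j)\in\Sigma^j$. Granting this, chaining it from $j=n$ down to $j=1$ gives $\mu(x)=\mu^{(n)}(x)\ge\eta^n\cdot\mu^{(0)}=\eta^n$, which is exactly the claim.

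To prove the one-step estimate, fix any completion $(x_{j+1},\dots,x_n)\in\Sigma^{n-j}$ and apply $\eta$-balancedness to the point $y=(x_1,\dots,x_n)$ with coordinate $i=j$: since $\mu|_{y_{\setminus j}}$ assigns probability at least $\eta$ to the value $x_j$, we get, writing $y^{j\to c}$ for $y$ with coordinate $j$ reset to $c$,
\[
    \mu(x_1,\dots,x_n)\ \ge\ \eta\sum_{c\in\Sigma}\mu\pbra{y^{j\to c}}.
\]
(Read this way---as an inequality between unnormalized masses---the bound is even trivially true, both sides being zero, on any coordinate-line of total mass zero, so there is no issue with the conditional being undefined; in particular the argument also reconfirms that $\mu$ is fully supported.) Summing this over all completions $(x_{j+1},\dots,x_n)$, the left-hand side becomes $\mu^{(j)}(x_1,\dots,x_j)$ and the right-hand side becomes $\eta\cdot\mu^{(j-1)}(x_1,\dots,x_{j-1})$, which is the one-step estimate.

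I do not anticipate any real obstacle; the only points requiring a little care are the bookkeeping for the summation over completions and the phrasing of balancedness in the unnormalized-inequality form above. One should also note that the more naive approach---showing $\mu(x)\ge\eta\cdot\mu(y)$ for Hamming-neighbors $x,y$ and then walking from $x$ to a maximum-probability point along a path of length at most $n$---only yields $\mu(x)\ge\eta^n\max_y\mu(y)\ge(\eta/|\Sigma|)^n$, which is weaker than the stated $\eta^n$; the prefix-marginal telescoping is what recovers the sharp constant.
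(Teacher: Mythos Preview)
Your proof is correct. It differs from the paper's only in organization: the paper inducts on $n$, showing first that each slice $S_b=\{x:x_n=b\}$ has mass at least $\eta$ (via an averaging argument over the coordinate-$n$ lines) and then applying the inductive hypothesis to the conditional $\mu|_{S_b}$, which is itself $\eta$-balanced on $\Sigma^{n-1}$. You instead telescope through prefix marginals $\mu^{(j)}$, proving $\mu^{(j)}(x_1,\dots,x_j)\ge\eta\cdot\mu^{(j-1)}(x_1,\dots,x_{j-1})$ by summing the pointwise balance inequality over all completions. Both arguments extract one factor of $\eta$ per coordinate; yours is slightly more self-contained in that it works entirely with the original $\mu$ and never needs the (easy but unstated in the paper) check that the conditional $\mu|_{S_b}$ inherits $\eta$-balancedness, while the paper's version makes the inductive structure more explicit. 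Your aside that the naive Hamming-walk argument only yields $(\eta/|\Sigma|)^n$ is also correct.
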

\begin{proof}
    We induct on $n$. In the base case $n=1$ and the conclusion is immediate. 

    For the inductive step let $\mu$ be an $\eta$-balanced distribution on $\Sigma^n$. For each $b\in \Sigma$, let $S_b=\cbra{x\in \Sigma^n:x_n = b}$. It must be the case that for all $b\in\Sigma$, we have $\mu(S_b)\geq \eta$, since otherwise by averaging there would be $y\in\Sigma^{n-1}$ such that $\mu|_{y_{\setminus n}}$ is not $\eta$-balanced. By the inductive hypothesis, since the distribution $\mu|_{S_b}$ is $\eta$-balanced, for any $x\in S_b$ we have $\mu|_{S_b}(x)\geq \eta^{n-1}$. Then $\mu(x)= \mu(S_b)\mu|_{S_b}(x)\geq \eta \cdot \eta^{n-1}=\eta^n$.
\end{proof}

Throughout this section let $0<\epsilon < n\log\pbra{\frac1{\eta}}$, which is without loss of generality since if $\mu$ is $\eta$-balanced then by \Cref{fact:eta-balance min probability} we have $\min_{x\in\Sigma^n}\mu(x)\geq \eta^n$. For any other distribution $\pi$, we have
\begin{align*}
    \infdiv{\KL}{\pi}{\mu} &= \Ex_{\pi}\sbra{\log\pbra{\frac{\pi}{\mu}}} \leq \max_{x}\pbra{\log\pbra{\pi(x)}{\mu(x)}}\leq n\log\pbra{\frac1{\eta}}.
\end{align*}
Let $\pi$ be an arbitrary distribution on $\Sigma^n$, and recall that $\mu=\sum_{a = 1}^k \rho(a)\mu_a$ is a mixture of $k$ distributions $\mu_1,\dots,\mu_k$. Let $\rho^*=\min_a \rho(a)$.

\subsubsection{Rejection by Local Testing}\label{sec:local reject}

\begin{lemma}\label{lem:complexity limit}
    The probability that the number of calls to the Coordinate Oracle and General Oracle needed exceeds the limit stated in \Cref{local reject} is at most $0.01$.
\end{lemma}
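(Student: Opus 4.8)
We need to show that in Step~\ref{local reject} of Algorithm~\ref{alg:identity test with coordinate oracle}, the probability that the total number of Coordinate Oracle calls exceeds the threshold $T$ is at most $0.01$. The point is that $T = 100 \cdot T_1 \cdot B$, where $B$ is (up to the factor hidden in $c_{\textsc{KL-Test}}$) an upper bound on the \emph{expected} number of Coordinate Oracle queries that a single invocation of $\textsc{KL-Test}$ makes.

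**Plan.** The plan is to bound the expected total number of Coordinate Oracle calls over all $T_1$ invocations of $\textsc{KL-Test}$ and then apply Markov's inequality. First I would recall that in Step~\ref{local reject}, each call $\textsc{KL-Test}(\pi|_{\bx_{\setminus\bi}}, \mu|_{\bx_{\setminus\bi}}, \bm\theta, 0.05/T_1)$ is being run with confidence parameter $\delta = 0.05/T_1$, domain size $d = |\Sigma|$ (since $\mu|_{\bx_{\setminus\bi}}$ is a distribution on $\Sigma$), balancedness parameter $\eta$ (by $\eta$-balancedness of $\mu$), and KL-threshold $\bm\theta$ drawn uniformly from $[0.05\epsilon/c^*, \log(1/\eta)]$. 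By Theorem~\ref{lem:KL tester}, each such call draws
\[
    O\pbra{\frac{\sqrt{|\Sigma|}\cdot \log(1/\eta)\cdot \log(1/\delta)}{\bm\theta}}
    = c_{\textsc{KL-Test}}\cdot \frac{\sqrt{|\Sigma|}\cdot \log(1/\eta)\cdot \log(20T_1/0.05)}{\bm\theta}
\]
samples from $\pi|_{\bx_{\setminus\bi}}$, and each such sample costs exactly one Coordinate Oracle call. Since $\log(20T_1/0.05) = O(\log(20 T_1))$, the only random quantity to control in expectation is $1/\bm\theta$.

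**Main step: bounding $\E[1/\bm\theta]$.** Since $\bm\theta \sim \mathrm{Unif}([\alpha, \beta])$ with $\alpha = 0.05\epsilon/c^*$ and $\beta = \log(1/\eta)$, we have
\[
    \E\sbra{\frac{1}{\bm\theta}} = \frac{1}{\beta - \alpha}\int_{\alpha}^{\beta}\frac{dt}{t} = \frac{\ln(\beta/\alpha)}{\beta-\alpha} \leq \frac{\ln(\beta/\alpha)}{\beta}\cdot\frac{1}{1-\alpha/\beta}.
\]
Using $0 < \epsilon < n\log(1/\eta)$ (so in particular $\alpha/\beta < 1/2$ once $c^* n \geq 0.1$, which holds since $c^* \geq 1$) and $\ln(\beta/\alpha) = \ln\pbra{\tfrac{\log(1/\eta)}{0.05\epsilon/c^*}} = O\pbra{\log\pbra{\tfrac{\log(1/\eta)}{\epsilon/c^*}}}$, we get
\[
    \E\sbra{\frac{1}{\bm\theta}} \leq O(1) \cdot \frac{1}{\log(1/\eta)}\cdot \log\pbra{\frac{\log(1/\eta)}{\epsilon/c^*}} = \frac{10\log\pbra{\log(1/\eta)/(\epsilon/c^*)}}{\log(1/\eta)}
\]
after absorbing constants. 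Multiplying by $c_{\textsc{KL-Test}}\sqrt{|\Sigma|}\log(1/\eta)\log(20T_1)$, the expected number of Coordinate Oracle calls made by a single $\textsc{KL-Test}$ invocation is at most exactly the quantity $B$ underbraced in the statement of $T$. Hence, summing over the $T_1$ invocations (each performed on an independently drawn pair, but linearity of expectation needs no independence), the expected total is at most $T_1 \cdot B = T/100$. By Markov's inequality, the probability this total exceeds $T$ is at most $1/100 = 0.01$.

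**Expected obstacle.** The only genuinely delicate point is verifying that the constant-factor bookkeeping actually closes — i.e., that the explicit constants ($100$, $10$, the $0.05$ inside $\delta$, the $20$ inside $\log(20T_1)$) are chosen so that $\E[\text{calls per invocation}] \leq B$ with $B$ exactly as written, rather than $B$ times some extra constant. This is routine but must be done carefully: one should track that $\log(1/\delta) = \log(20T_1/0.05) \leq c'\log(20T_1)$ for an appropriate absolute constant $c'$ (valid since $T_1 \geq 1$), and that the "$10\log(\cdots)$" factor in $B$ genuinely dominates $\ln(\beta/\alpha)/(1-\alpha/\beta)$. A subtle edge case worth a sentence: if $\log(1/\eta) \leq 0.05\epsilon/c^*$ then the interval $[\alpha,\beta]$ is degenerate; but this cannot happen because we assumed $\epsilon < n\log(1/\eta)$ and $c^* \geq 1$, so $0.05\epsilon/c^* < 0.05 n\log(1/\eta) $, which is $< \log(1/\eta)$ only when $n < 20$ — so strictly one should note this reasoning requires the interval to be non-degenerate, handled by the (implicit) normalization that $\textsc{KL-Test}$ is never run with threshold exceeding $\log(1/\eta)$, the a priori maximum possible KL value.
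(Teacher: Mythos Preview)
Your approach is essentially identical to the paper's: bound the per-call Coordinate Oracle cost by $c_{\textsc{KL-Test}}\sqrt{|\Sigma|}\log(1/\eta)\log(20T_1)/\bm\theta$, compute $\E[1/\bm\theta]$ by integrating over the uniform law of~$\bm\theta$, sum over the $T_1$ calls by linearity, and apply Markov's inequality against the factor-$100$ slack in~$T$. One small bookkeeping slip: since $\delta = 0.05/T_1$ you have $1/\delta = 20T_1$ exactly, so $\log(1/\delta) = \log(20T_1)$ on the nose (not $\log(20T_1/0.05)$), which is why the paper writes $\log(20T_1)$ directly in~$B$.
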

\begin{proof}
    With certainty over the choice of $\bx$ and $\bi$, the distribution $\mu|_{\bx_{\setminus \bi}}$ is has minimum probability at least $\eta$ by definition of $\eta$-balancedness. Then a call to $\textsc{KL-Test}\pbra{\pi|_{\bx_{\setminus \bi}},\mu|_{\bx_{\setminus \bi}},\bm{\theta},0.05\cdot \frac1{T_1}}$, by \Cref{lem:KL tester} requires 
    \begin{align*}
        c_{\textsc{KL-Test}}\cdot\frac{\sqrt{\abs{\Sigma}}\cdot \log(1/\eta)\cdot \log(20T_1)}{\bm{\theta}}
    \end{align*}
    calls to the Coordinate Oracle. To compute the total expected number of samples, we first compute
    \begin{align*}
        \Ex_{\bm{\theta}}\sbra{\frac{1}{\bm{\theta}}}&= \frac1{\log(1/\eta)-0.05\epsilon/c^*}\int_{\frac{0.05\epsilon}{c^*n}}^{\log(1/\eta)}\frac1{\theta} d\theta \\
        &=\frac1{\log(1/\eta)-0.05\epsilon/c^*}\ln\pbra{\frac{c^*\log(1/\eta)}{0.05\epsilon}}\\
        &\leq \frac{10}{\log(1/\eta)}\log\pbra{\frac{c^*\log(1/\eta)}{\epsilon}}.
    \end{align*}
    Here we use that $c^*\geq 1$, and that $\epsilon\leq n\log\pbra{\frac1\eta}$ so that
    \begin{align*}
        \frac{0.05c^*\epsilon}{n} \leq \frac{0.05n\log\pbra{\frac1\eta}}{n}= 0.05\log\pbra{\frac1\eta}.
    \end{align*}
    Then, using that $\eta\leq \frac12$, we bound this by
    \begin{align*}
        \Ex_{\bm{\theta}}\sbra{\frac{1}{\bm{\theta}}}&\leq 10\log\pbra{\frac{\log(1/\eta)}{c^*\epsilon}}.
    \end{align*}
    Therefore, the expected number of samples, over all $T_1$ calls to $\textsc{KL-Test}$ is at most 
    \begin{align*}
        T_1\cdot 10\cdot c_{\textsc{KL-Test}}\cdot \sqrt{\abs{\Sigma}}\cdot \log(1/\eta)\cdot \log(20T_1) \cdot\log\pbra{\frac{\log(1/\eta)}{c^*\epsilon}}.
    \end{align*}
    Markov's inequality shows the result. 
\end{proof}

\begin{lemma}\label{lem:local reject}
    Assume that $\mu$ is $\eta$-balanced. Then \Cref{alg:identity test with coordinate oracle} satisfies the following performance guarantees:
    \begin{enumerate}
        \item If $\pi=\mu$ then \Cref{local reject} in $\textsc{KL-Test}(\mu,\pi,\epsilon)$ rejects with probability at most $0.1$.
        \item If $\Ex_{\ba\sim \rho}\sbra{\ent_{x\sim \mu_{\ba}}\sbra{\frac{\pi(\bx)}{\mu(\bx)}}}\geq 0.5\epsilon$ then \Cref{local reject} in $\textsc{KL-Test}(\mu,\pi,\epsilon)$ rejects with probability at least $0.9$.
    \end{enumerate}
\end{lemma}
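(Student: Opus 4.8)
The plan is to analyze the two cases of the performance guarantee separately, relying on the correctness of \textsc{KL-Test} (\Cref{lem:KL tester}), the complexity bound of \Cref{lem:complexity limit}, and — crucially — a second application of the chain rule to translate the hypothesis ``$\Ex_{\ba\sim\rho}[\ent_{\mu_{\ba}}[\pi/\mu]]\geq 0.5\epsilon$'' into a statement about the expected \emph{local} entropies $\calL_{\mu_a}$ of each component, and hence about the quantity $\calL_\mu[\pi/\mu]$ that the local test actually sees.

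First, the completeness case ($\pi=\mu$). Here every call $\textsc{KL-Test}(\pi|_{\bx_{\setminus\bi}},\mu|_{\bx_{\setminus\bi}},\bm\theta,0.05/T_1)$ is run on two identical distributions, so by item~1 of \Cref{lem:KL tester} each accepts except with probability $0.05/T_1$; a union bound over the $T_1$ draws gives total failure probability at most $0.05$. Adding the probability at most $0.01$ from \Cref{lem:complexity limit} that the Coordinate-Oracle budget $T$ is exceeded (this event is what makes the ``reject if budget exceeded'' clause fire), we conclude \Cref{local reject} rejects with probability at most $0.06\leq 0.1$. Note this half needs $\mu$ to be $\eta$-balanced only so that each $\mu|_{\bx_{\setminus\bi}}$ is $\eta$-balanced, which is exactly \Cref{def:balance}.

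Second, the soundness case. By \Cref{lem:local statistic to global statistic} applied with $f=\pi/\mu$, the hypothesis $\Ex_{\ba\sim\rho}[\ent_{\mu_{\ba}}[\pi/\mu]]\geq 0.5\epsilon$ gives $c^*\cdot\calL_\mu[\pi/\mu]\geq 0.5\epsilon$, i.e.\ $\sum_{i\in[n]}\Ex_{\bx\sim\mu}[\ent_{\by\sim\mu|_{\bx_{\setminus i}}}[\pi/\mu]]\geq 0.5\epsilon/c^*$. Now I want to rewrite this in terms of the quantities the test probes. The key identity (again a chain-rule / density bookkeeping step) is that for a fixed $x$ and $i$, $\ent_{\mu|_{x_{\setminus i}}}[\pi/\mu] = \mu(x\text{-block weight})\cdot \infdiv{\KL}{\pi|_{x_{\setminus i}}}{\mu|_{x_{\setminus i}}}$ up to the appropriate normalization by $\pi(x_{\setminus i})/\mu(x_{\setminus i})$; averaging over $\bx\sim\pi$ rather than $\bx\sim\mu$ reweights exactly so that $\Ex_{\bx\sim\pi,\bi}[\infdiv{\KL}{\pi|_{\bx_{\setminus\bi}}}{\mu|_{\bx_{\setminus\bi}}}] \ge \tfrac{1}{n}\calL_\mu[\pi/\mu]\ge 0.5\epsilon/(c^*n)$. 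Hence, for a random pair $(\bx,\bi)$ with $\bx\sim\pi$, the expected value of $\infdiv{\KL}{\pi|_{\bx_{\setminus\bi}}}{\mu|_{\bx_{\setminus\bi}}}$ is at least $0.5\epsilon/(c^*n)$. Since this KL-divergence is bounded above by $n\log(1/\eta)$ (by \Cref{fact:eta-balance min probability}-type reasoning, or rather $\log(1/\eta)$ after a single-coordinate conditioning), a Markov/averaging argument shows that with probability $\Omega(\epsilon/(c^*n\log(1/\eta)))$ a single draw $(\bx,\bi)$ has $\infdiv{\KL}{\pi|_{\bx_{\setminus\bi}}}{\mu|_{\bx_{\setminus\bi}}} = \Omega(\epsilon/(c^*n))$; but we also need the random threshold $\bm\theta\sim\mathrm{Unif}[0.05\epsilon/c^*n,\ \log(1/\eta)]$ to land below this KL value. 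Conditioned on a ``good'' pair with KL value $v$, the probability $\bm\theta\le v$ is $\Omega(v/\log(1/\eta))$. Combining: over the $T_1=\Theta(c^*n/\epsilon)$ independent draws, the expected number of (pair, threshold) combinations that both have a large-KL pair \emph{and} a threshold beneath it is $\Omega(1)$ after the right constant is fixed — and when that happens, item~2 of \Cref{lem:KL tester} says the corresponding \textsc{KL-Test} call rejects except with probability $0.05/T_1$. Thus \Cref{local reject} rejects with probability at least $0.9$, after also subtracting the $0.01$ failure probability from \Cref{lem:complexity limit} and the $0.05$ union-bound slack on spurious rejections being irrelevant (we want a rejection here, so only the budget-overflow and the one-sided-error of \textsc{KL-Test} on the good call matter).

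The main obstacle is the bookkeeping in the soundness case: getting from $\Ex_{\ba\sim\rho}[\ent_{\mu_{\ba}}[\pi/\mu]]\geq 0.5\epsilon$ to a clean lower bound on $\Ex_{\bx\sim\pi,\bi}[\infdiv{\KL}{\pi|_{\bx_{\setminus\bi}}}{\mu|_{\bx_{\setminus\bi}}}]$ requires carefully relating the $\mu$-weighted local entropy $\calL_\mu[\pi/\mu]$ (which is what \Cref{lem:local statistic to global statistic} outputs) to the $\pi$-weighted average of per-coordinate KL-divergences, via the identity $\ent_\nu[\pi/\nu] = \infdiv{\KL}{\pi}{\nu}$ when $\pi,\nu$ share a normalization — here the subtlety is that $\pi|_{x_{\setminus i}}$ and $\mu|_{x_{\setminus i}}$ have different total masses on the block $x_{\setminus i}$ under the global measures, so the reweighting factors $\pi(x_{\setminus i})/\mu(x_{\setminus i})$ must be tracked and shown to integrate correctly. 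Once that identity is in place, the two-stage Markov argument (first over the pair, then over the threshold) and the union bound over $T_1$ trials are routine, and the choice of the constant in $T_1=O(c^*n/\epsilon)$ is dictated precisely by the product of the two $\Omega(\cdot)$ probabilities above.
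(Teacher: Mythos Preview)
Your completeness argument and your identification of the key identity relating $\calL_\mu[\pi/\mu]$ to $\Ex_{\bx\sim\pi,\bi}[\infdiv{\KL}{\pi|_{\bx_{\setminus\bi}}}{\mu|_{\bx_{\setminus\bi}}}]$ are both correct; the latter is exactly \Cref{lem:conditional flip from 1-homogeneity} in the paper (and is in fact an equality, not just an inequality).

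The gap is in your ``two-stage Markov'' step. You argue: (i)~with probability $\Omega\bigl(\tfrac{\epsilon}{c^*n\log(1/\eta)}\bigr)$ the pair $(\bx,\bi)$ is ``good'' (meaning its KL value $v$ is $\Omega(\epsilon/(c^*n))$); (ii)~conditioned on a good pair, $\Pr[\bm\theta\leq v]=\Omega\bigl(\tfrac{v}{\log(1/\eta)}\bigr)=\Omega\bigl(\tfrac{\epsilon}{c^*n\log(1/\eta)}\bigr)$. Multiplying gives a per-trial success probability of only $\Omega\bigl(\tfrac{\epsilon^2}{(c^*n)^2\log^2(1/\eta)}\bigr)$, so with $T_1=\Theta(c^*n/\epsilon)$ trials the expected number of successes is $\Theta\bigl(\tfrac{\epsilon}{c^*n\log^2(1/\eta)}\bigr)=o(1)$, not $\Omega(1)$. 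The constants cannot be fixed to save this.

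The point you are missing is that the random uniform threshold is there precisely to \emph{avoid} this loss: rather than first thresholding $\bY$ and then $\bm\theta$, the paper writes
\[
\Pr[\bY\geq\bm\theta]\;=\;\frac{1}{\log(1/\eta)-\tfrac{0.05\epsilon}{c^*n}}\int_{0.05\epsilon/(c^*n)}^{\log(1/\eta)}\Pr[\bY\geq\theta]\,d\theta
\]
and uses $\int_0^{\log(1/\eta)}\Pr[\bY\geq\theta]\,d\theta=\E[\bY]$ (since $\bY\leq\log(1/\eta)$) to obtain a bound of order $\E[\bY]/\log(1/\eta)$ directly. Decoupling the two events as you do discards the correlation ``large $\bY$ $\Rightarrow$ $\{\bm\theta\leq\bY\}$ proportionally more likely,'' which is exactly what the random-threshold trick exploits. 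Replace your two-stage argument with this single integration and the rest of your outline goes through.
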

\begin{proof}
    Define the random variable $\bY$ to be $\ent_{\bz\sim \mu|_{\bx_{\setminus i}}}\sbra{\frac{\pi|_{x_{\setminus i}}(\bz)}{\mu|_{x_{\setminus i}}(\bz)}}$ for $\bx\sim\pi$ and $\bi\in[n]$ uniform.

    Consider the version of \Cref{local reject} without the sample limit. If $\pi=\mu$ then by the guarantee of \Cref{lem:KL tester} shows that the rejection probability in each of the $T_1$ iterations using this unlimited tester in \Cref{local reject} is at most $0.05\cdot \frac1{T_1}$. Since by \Cref{lem:complexity limit} the limit on the number of Coordinate Oracle calls changes the behavior with probability at most $0.01$, the overall rejection probability is by a union bound at most $T_1 \cdot \frac{0.05}{T_1} + 0.01\leq 0.1$.
    
    Now suppose that $\Ex_{\ba\sim \rho}\sbra{\ent_{x\sim \mu_{\ba}}\sbra{\frac{\pi(\bx)}{\mu(\bx)}}}\geq 0.5\epsilon$. Then by \Cref{lem:conditional flip from 1-homogeneity} and \Cref{lem:local statistic to global statistic} we have
    \begin{align*}
        \Ex\sbra{\bY}
        &=\frac1n\sum_{i\in[n]}\Ex_{\bx\sim \pi}\sbra{\ent_{\bm{z}\sim \mu|_{\bx_{\setminus i}}}\sbra{\frac{\pi|_{\bx_{\setminus i}}(\bz)}{\mu|_{\bx_{\setminus i}}(\bz)}}}=\frac1n\calL_\mu\sbra{\frac{\pi}{\mu}}
        \geq \frac{0.5  \epsilon}{c^*n},
    \end{align*}
    where $c^*=\max_ac_a$ is a constant such that all $\mu_a$ satisfy $c^*$-ATE. 

    Using that $\bY\leq \log(1/\eta)$ with certainty,
    \begin{align*}
        \Pr_{\bm{\theta},\bY}\sbra{\bY\geq \bm{\theta}} &= \frac{1}{\log(1/\eta)-\frac{0.05\epsilon}{c^*n}}\int_{\frac{0.05\epsilon}{c^*n}}^{\log(1/\eta)}\Pr_{\bY}\sbra{\bY\geq\theta}d\theta\\
        &\geq \frac1{\log(1/\eta)}\int_{0}^{\log(1/\eta)}\Pr_{\bY}\sbra{\bY\geq\theta}d\theta-\frac1{\log(1/\eta)}\int_0^{\frac{0.05\epsilon/c^*}{n}}\Pr_{\bY}\sbra{\bY\geq\theta}d\theta\\
        &=\Ex\sbra{\bY} - \frac1{\log(1/\eta)}\int_0^{0.05\epsilon/c^*}\Pr_{\bY}\sbra{\bY\geq\theta}d\theta\\
        &\geq \Ex\sbra{\bY} -\frac{0.05 \epsilon}{c^*n}\\
        &\geq \frac{0.9}{n} \cdot \Ex\sbra{\bY}\\
        &\geq \frac{0.45 \epsilon}{c^*n}.
    \end{align*}
    Therefore, for $T_1$ draws of $\bx\sim\pi$, $\bi\sim[n]$ uniform, and $\bm{\theta}$, the probability that none satisfy $\bm{\theta}\leq \ent_{\bz\sim \mu|_{\bx_{\setminus i}}}\sbra{\frac{\pi|_{x_{\setminus i}}(\bz)}{\mu|_{x_{\setminus i}}(\bz)}}$ is at most 
    \begin{align*}
        \pbra{1 - \frac{0.45 \epsilon}{c^*n}}^{T_1}&\leq 0.04
    \end{align*}
    for a correct choice of constant in the definition of $T_1$.
    
    Let $\mathcal{A}$ be the event that this occurs so that $\Pr\sbra{\mathcal{A}}\leq 0.01$. Then let $\mathcal{B}$ be the event that for some call $\textsc{KL-Test}\pbra{\pi|_{\bx_{\setminus \bi}},\mu|_{\bx_{\setminus \bi}},\bm{\theta},0.05\cdot \frac1{T_1}}$ for which $\bm{\theta}\leq \ent_{\bz\sim \mu|_{\bx_{\setminus i}}}\sbra{\frac{\pi|_{x_{\setminus i}}(\bz)}{\mu|_{x_{\setminus i}}(\bz)}}$, the test accepted. The probability that a single call of this form failed is at most $0.05\cdot \frac1{T_1}$, so a union bound gives $\Pr\sbra{\mathcal{B}}\leq 0.05$. 

    By a union bound, the probability of accepting $\pi$ is then
    \begin{align*}
        \Pr\sbra{\mathcal{A}} + \Pr\sbra{\mathcal{B}} &\leq 0.1.
    \end{align*}
    Thus, the performance guarantee is satisfied.
\end{proof}

\subsubsection{Rejection by Posterior Weight Estimation}\label{sec:weight verification reject}

\begin{lemma}\label{lem:weight verification reject}
    \Cref{alg:identity test with coordinate oracle} satisfies the following performance guarantees:
    \begin{enumerate}
        \item If $\pi=\mu$ then \Cref{weight verification reject} in $\textsc{KL-Test}(\mu,\pi,\epsilon)$ rejects with probability at most $0.1$.
        \item If $\ent_{\ba\sim \rho}\sbra{\Ex_{x\sim \mu_{\ba}}\sbra{\frac{\pi(\bx)}{\mu(\bx)}}}\geq 0.5\epsilon$ then \Cref{weight verification reject} in $\textsc{KL-Test}(\mu,\pi,\epsilon)$ rejects with probability at least $0.9$.
    \end{enumerate}
\end{lemma}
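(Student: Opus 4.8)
The plan is to reduce both guarantees directly to the correctness of $\textsc{KL-Test}$ (\Cref{lem:KL tester}) applied to the pair of distributions $(\rho_\pi, \rho)$ on the universe $[k]$. First I would observe that $\rho$ is $\rho^*$-balanced on $[k]$ by definition of $\rho^* = \min_a \rho(a)$, so $\textsc{KL-Test}(\rho_\pi, \rho, 0.5\epsilon, 0.1)$ is a legitimate invocation and its sample complexity is $O\!\pbra{\frac{\sqrt{k}\,\log(1/\rho^*)}{\epsilon}}$, matching the claimed contribution of \Cref{weight verification reject}. (One also needs that the algorithm can actually draw samples from $\rho_\pi$: given a sample $\bx \sim \pi$ from the General Oracle and the explicit descriptions of $\rho$ and the $\mu_a$, it computes the posterior $a \mapsto \rho(a)\mu_a(\bx)/\mu(\bx)$ and samples from it, so each draw from $\rho_\pi$ costs one General Oracle call.)

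For the completeness direction, if $\pi = \mu$ then the posterior-sampling process defining $\rho_\pi$ is exactly the process defining $\rho$ itself: drawing $\bx \sim \mu$ and then $\ba$ from the posterior $\rho|_{\bx}$ yields the joint distribution whose $\ba$-marginal is $\rho(a) = \sum_x \mu(x)\frac{\rho(a)\mu_a(x)}{\mu(x)}$. Hence $\rho_\pi = \rho$, and by the first guarantee of \Cref{lem:KL tester}, $\textsc{KL-Test}$ accepts with probability at least $0.9$, i.e.\ \Cref{weight verification reject} rejects with probability at most $0.1$.

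For the soundness direction, suppose $\ent_{\ba\sim\rho}\!\sbra{\Ex_{\mu_{\ba}}\!\sbra{\pi/\mu}} \geq 0.5\epsilon$. By \Cref{claim:intertropyasdivergence} (with $\Phi(u) = u\log u$), this quantity equals $\infdiv{\KL}{\rho_\pi}{\rho}$, so $\infdiv{\KL}{\rho_\pi}{\rho} \geq 0.5\epsilon$. Then the second guarantee of \Cref{lem:KL tester}, applied with error parameter $0.5\epsilon$, says $\textsc{KL-Test}(\rho_\pi, \rho, 0.5\epsilon, 0.1)$ accepts with probability at most $0.1$, i.e.\ \Cref{weight verification reject} rejects with probability at least $0.9$.

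The main thing to be careful about — really the only subtlety — is verifying that $\rho_\pi$ can be sampled within the stated oracle budget and that the $\Phi$-entropy appearing in the chain rule is literally the KL-divergence that the base tester handles; both follow from \Cref{claim:intertropyasdivergence} and the identification $\ent_\mu = \ent_\mu^{u\log u}$. There is no quantitative loss or concentration argument here: the lemma is essentially a black-box wrapper around \Cref{lem:KL tester}, with the chain rule already having been invoked in the proof of \Cref{thm:main} to reduce to the hypothesis $\infdiv{\KL}{\rho_\pi}{\rho} \geq 0.5\epsilon$.
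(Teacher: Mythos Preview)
Your proposal is correct and follows essentially the same approach as the paper: identify the inter-component entropy with $\infdiv{\KL}{\rho_\pi}{\rho}$ via \Cref{claim:intertropyasdivergence}, then invoke the two guarantees of \Cref{lem:KL tester}. You are in fact a bit more explicit than the paper in verifying $\rho_\pi = \rho$ when $\pi = \mu$ and in noting that $\rho$ has minimum probability~$\rho^*$, but these are elaborations of the same argument rather than a different route.
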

\begin{proof}
    By \Cref{claim:intertropyasdivergence} we have that
    \begin{align*}
        \ent_{\ba\sim\rho}\sbra{\Ex_{\bx\sim \mu_{\ba}}\sbra{\frac{\pi(\bx)}{\mu(\bx)}}}
        &= \ent_{\ba\sim\rho}\sbra{\frac{\rho_\pi(\ba)}{\rho(\ba)}}=\infdiv{\KL}{\rho_\pi}{\rho}.
    \end{align*}
    Then the guarantee of \Cref{lem:KL tester} shows that if $\pi=\mu$ the step \Cref{weight verification reject} rejects with probability at most 0.1. Otherwise, if $\KL(\rho_\pi,\rho)\geq 0.5\epsilon$ then \Cref{weight verification reject} in \Cref{alg:identity test with coordinate oracle} rejects with probability at least $0.9$.
\end{proof}

\begin{lemma}\label{lem:conditional flip from 1-homogeneity}
    The following equality holds for all distribution $\pi$ and $\mu$ on $\Sigma^n$:
    \begin{align*}
        \calL_\pi\sbra{\frac{\pi}{\mu}}=&\sum_{i\in[n]}\Ex_{\bx\sim \pi}\sbra{\ent_{\bm{z}\sim \mu|_{\bx_{\setminus i}}}\sbra{\frac{\pi|_{\bx_{\setminus i}}(\bz)}{\mu|_{\bx_{\setminus i}}(\bz)}}}.
    \end{align*}
\end{lemma}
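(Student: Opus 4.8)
The plan is to verify the identity one coordinate at a time, and within each coordinate one ``slice'' at a time. Fix $i \in [n]$ and write a point of $\Sigma^n$ as a pair $(w,b)$, where $w = x_{\setminus i}$ is the restriction to the coordinates $[n]\setminus\{i\}$ and $b = x_i \in \Sigma$. For a slice $w$, write $\pi(w) = \sum_{b}\pi(w,b)$ and $\mu(w) = \sum_b \mu(w,b)$ for its marginal probabilities, so that the conditionals on the fiber over $w$ are $\pi|_w(b) = \pi(w,b)/\pi(w)$ and $\mu|_w(b) = \mu(w,b)/\mu(w)$. Since the inner entropy in either side depends on $\bx$ only through $\bx_{\setminus i}$, the first step is to rewrite the $i$-th summand of each side as a weighted sum over slices $w$, each weighted by the probability that $\bx_{\setminus i}=w$ under the relevant outer distribution; e.g.\ the $i$-th summand of the right-hand side becomes $\sum_w \pi(w)\,\ent_{\bz\sim\mu|_w}\!\big[\pi|_w(\bz)/\mu|_w(\bz)\big]$.

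The key observation is the elementary fiberwise identity
\[
  \frac{\pi|_w(b)}{\mu|_w(b)} \;=\; \frac{\mu(w)}{\pi(w)}\cdot\frac{\pi(w,b)}{\mu(w,b)},
\]
i.e.\ on each fiber the \emph{local} density ratio $\pi|_w/\mu|_w$ is a nonnegative constant multiple of the (restriction of the) \emph{global} density ratio $\pi/\mu$, the constant being $\mu(w)/\pi(w)$. Substituting this in and invoking $1$-homogeneity of the entropy functional (\Cref{fact:1-homogeneity}) lets us pull the constant $\mu(w)/\pi(w)$ outside of $\ent$; that constant then combines with the slice weight $\pi(w)$ to leave $\mu(w)$, which is precisely what is needed to match the corresponding summand on the other side. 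Summing over slices $w$, then over coordinates $i\in[n]$, and reading the result back off the definition of $\calL$ yields the stated equality.

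The step that requires the most care is keeping straight \emph{which} distribution weights the slices in each outer expectation and \emph{which} conditional sits inside each $\ent$: the entire proof hinges on the single fact that rescaling by $\mu(w)/\pi(w)$ from the fiberwise identity exactly reconciles the $\pi$-side and $\mu$-side accounting of slice weights and of the conditional measures inside $\ent$, so one should track those weights explicitly rather than purely symbolically. The remaining ingredients — the per-coordinate decomposition, the fiberwise ratio identity, and the invocation of \Cref{fact:1-homogeneity} — are routine, so this bookkeeping is the only real content.
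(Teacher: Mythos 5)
Correct, and essentially the same argument as the paper's: both proofs expand coordinate-by-coordinate over slices $w=x_{\setminus i}$, use the fiberwise identity $\pi|_w(z)/\mu|_w(z)=\frac{\mu(w)}{\pi(w)}\cdot\frac{\pi(w,z)}{\mu(w,z)}$, and apply $1$-homogeneity (\Cref{fact:1-homogeneity}) so that the factor $\mu(w)/\pi(w)$ cancels the $\pi(w)$ slice weight. One small remark for your bookkeeping: what this computation actually yields (in both your write-up and the paper's) is $\calL_\mu\sbra{\pi/\mu}$ --- outer expectation and inner conditional both under $\mu$ --- which is exactly how the lemma is later invoked in the analysis of \Cref{local reject}; the subscript $\pi$ on the left-hand side of the stated lemma is a typo relative to the paper's definition of $\calL$, so your final ``read the result back off the definition of $\calL$'' should be understood as reading off $\calL_\mu$, not $\calL_\pi$.
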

\begin{proof}
    We directly compute
    \begin{align*}
        \sum_{i\in[n]}\Ex_{\bx\sim \pi}\sbra{\ent_{\bm{z}\sim \mu|_{\bx_{\setminus i}}}\sbra{\frac{\pi|_{\bx_{\setminus i}}(\bz)}{\mu|_{\bx_{\setminus i}}(\bz)}}}
        &=\frac1{|\Sigma|}\sum_{i\in[n]}\sum_{x}\pi(x_{\setminus i})\ent_{\bm{z}\sim \mu|_{x_{\setminus i}}}\sbra{\frac{\pi|_{x_{\setminus i}}(\bz)}{\mu|_{x_{\setminus i}}(\bz)}}\\
        &=\frac1{|\Sigma|}\sum_{i\in[n]}\sum_{x}\pi(x_{\setminus i})\cdot \frac{\mu(x_{\setminus i})}{\pi(x_{\setminus i})}\cdot\ent_{\bm{y}\sim \mu|_{x_{\setminus i}}}\sbra{\frac{\pi(\by)}{\mu(\by)}}\\
        &=\frac1{|\Sigma|}\sum_{i\in[n]}\sum_{x}\mu(x_{\setminus i})\cdot\ent_{\bm{y}\sim \mu|_{x_{\setminus i}}}\sbra{\frac{\pi(\by)}{\mu(\by)}}\\
        &=\calL_\pi\sbra{\frac{\pi}{\mu}}.
    \end{align*}
    The second equality follows by $1$-homogeneity of $\ent[\cdot]$ (\Cref{fact:1-homogeneity}).
\end{proof}

\begin{remark}
    \Cref{lem:conditional flip from 1-homogeneity} is the only place we require the $\Phi$-entropy we use to be $\Phi(u)=u\log u$, since this is the only place we need 1-homogeneity. We leave it open for future work whether one can obtain testers for different $\Phi$-entropies (i.e., other divergences besides KL-divergence) by bypassing the need for \Cref{lem:conditional flip from 1-homogeneity}.
\end{remark}

\bibliographystyle{alpha}
\bibliography{ref.bib}
\appendix\section{Deferred Proofs}\label{sec:deferred proofs}
\subsection{Proof of \Cref{lem:chain rule}}
\begin{proof}[Proof of \Cref{lem:chain rule}]
    We directly compute:
    \begin{align*}
        &{\ent_{\ba\sim\rho}}^\Phi\sbra{\Ex_{\bx\sim \mu_{\ba}}\sbra{f}}+ \Ex_{\ba\sim\rho}\sbra{{\ent_{\bx\sim \mu_{\ba}}}^\Phi\sbra{f}}\\
        ={}& \Ex_{\ba\sim\rho}\sbra{\Phi\pbra{\Ex_{\bx\sim \mu_{\ba}}[f]}}-\Phi\pbra{\Ex_{\ba\sim\rho}\Ex_{\bx\sim \mu_{\ba}}[f]}+\Ex_{\ba\sim\rho}\sbra{\Ex_{\bx\sim \mu_{\ba}}\sbra{\Phi(f(\bx))}}-\Ex_{\ba\sim\rho}\sbra{\Phi\pbra{\Ex_{\bx\sim \mu_{\ba}}[f]}}\\
        ={}& \Ex_{\ba\sim\rho}\sbra{\Ex_{\bx\sim \mu_{\ba}}\sbra{\Phi(f(\bx))}}-\Phi\pbra{\Ex_{\ba\sim\rho}\Ex_{\bx\sim \mu_{\ba}}[f]}\\
        ={}& \Ex_{\bx\sim \mu}\sbra{\Phi(f(\bx))}-\Phi\pbra{\Ex_{\bx\sim \mu}[f]}\\
        ={}& {\ent_\mu}^\Phi[f].\qedhere
    \end{align*}
\end{proof} 

\subsection{Proof of \Cref{lem:local statistic to global statistic}}
\begin{proof}[Proof of \Cref{lem:local statistic to global statistic}]
    We compute
    \begin{align*}
        \calL_\mu\sbra{f}=&\sum_{i\in[n]}\Ex_{\bx\sim \mu}\sbra{\ent_{\by\sim\mu|_{\bx_{\setminus i}}}\sbra{f(\by)}}
        =\sum_{i\in[n]}\Ex_{\ba\sim\rho,\bx\sim \mu_{\ba}}\sbra{\ent_{\by\sim\mu|_{\bx_{\setminus i}}}\sbra{f(\by)}}.
    \end{align*}
    Since $\mu|_{\bx_{\setminus i}}=\Ex_{\ba'\sim \rho|\bx_{\setminus i}}[\mu_{\ba'}|_{\bx_{\setminus i}}]$, we have by the chain rule (\Cref{lem:chain rule}) that the above is bounded below by
    \begin{align*}
        \sum_{i\in[n]}\Ex_{\ba\sim\rho,\bx\sim \mu_{\ba}}\sbra{\Ex_{\ba'\sim \rho|\bx_{\setminus i}}\sbra{\ent_{\by\sim\mu_{\ba'}|_{\bx_{\setminus i}}}\sbra{f(\by)}}}=&\sum_{i\in[n]}\Ex_{\ba\sim\rho,\bx\sim \mu_{\ba},\ba'\sim\rho|\bx_{\setminus i}}\sbra{\ent_{\by\sim\mu_{\ba'}|_{\bx_{\setminus i}}}\sbra{f(\by)}}\\
        =&\sum_{i\in[n]}\Ex_{\ba\sim\rho,\bx\sim\mu_{\ba}}\sbra{\ent_{\by\sim\mu_{\ba}|_{\bx_{\setminus i}}}\sbra{f(\by)}}.
    \end{align*}
    By applying ATE for each $\mu_a$, we can lower-bound this by
    \begin{equation*}
        \Ex_{\ba\sim\rho}\sbra{\sum_{i\in[n]}\Ex_{\bx\sim\mu_{\ba}}\sbra{\ent_{\by\sim\mu_{\ba}|_{\bx_{\setminus i}}}\sbra{f(\by)}}}\geq \Ex_{\ba\sim\rho}\sbra{\frac1{c^*}\cdot \ent_{\bx\sim\mu_{\ba}}\sbra{f(\bx)}}. \qedhere
    \end{equation*}
\end{proof}

\subsection{Proof of \Cref{lem:maintrick}}

\begin{proof}[Proof of \Cref{lem:maintrick}]
        Let $\pi$ be any initial distribution and let $f = \pi/\mu$ be the density function of $\pi$ with respect to $\mu$. The impetus behind the switch to continuous-time $P_t$ rather than discrete-time $P$ is that we immediately get the following continuous characterization of $\Phi$-divergence contraction:
        
        \begin{lemma}[\cite{chafai2004entropies}, Proposition 1]\label{lem:chafai}
            Let $f_t = P_t f$. Then,
            \begin{equation*}
                \frac{d}{dt} \infdiv{\Phi}{P_t \pi}{\mu} = -\calE_P(f_t, \Phi'(f_t)).
            \end{equation*}
        \end{lemma}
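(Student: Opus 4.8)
The plan is to reduce the identity to finite-dimensional linear algebra on $L^2(\mu)$, using two standard facts: that the continuous-time semigroup preserves $\mu$-averages, and that the Dirichlet form is the ``integration by parts'' of the chain's generator against the reversible measure $\mu$. Write $f = \pi/\mu \ge 0$ and $f_t = P_t f$, so that $\infdiv{\Phi}{P_t\pi}{\mu} = \ent_\mu^\Phi[f_t]$; since $P_t$ is a non-negative matrix fixing the all-ones vector, $f_t \ge 0$ for all $t \ge 0$, so $\Phi'(f_t)$ is defined on the relevant range. Unpacking the $\Phi$-entropy,
\[
    \ent_\mu^\Phi[f_t] = \E_{\bx\sim\mu}\sbra{\Phi(f_t(\bx))} - \Phi\pbra{\E_{\bx\sim\mu}\sbra{f_t(\bx)}}.
\]
Because $\mu$ is stationary for $P$, it is fixed by $P_t$, hence $\E_\mu[f_t] = \E_\mu[P_t f] = \E_\mu[f] = 1$ for every $t$; the second term above is therefore the constant $\Phi(1)$ and drops out upon differentiation. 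As $\Omega$ is finite there is nothing to justify in differentiating the remaining finite sum term by term, so
\[
    \frac{d}{dt}\infdiv{\Phi}{P_t\pi}{\mu} = \frac{d}{dt}\,\E_\mu\sbra{\Phi(f_t)} = \E_\mu\sbra{\Phi'(f_t)\cdot \partial_t f_t}.
\]

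Next I would invoke the Kolmogorov equation $\partial_t f_t = \mathcal L f_t$, where $\mathcal L$ is the infinitesimal generator of $(P_t)$; concretely $\mathcal L$ is a positive multiple of $P - I$ (the multiple being the time-normalization of the continuous-time chain), and since $P$ is reversible with respect to $\mu$, both $P$ and $\mathcal L$ are self-adjoint on $L^2(\mu)$. Thus the right-hand side equals the inner product $\langle \Phi'(f_t),\, \mathcal L f_t\rangle_\mu$.

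It remains to identify $\langle \Phi'(f_t), \mathcal L f_t\rangle_\mu$ with $-\calE_P(f_t, \Phi'(f_t))$. For this, expand $(g(x)-g(y))(h(x)-h(y))$ inside the definition of $\calE_P(g,h)$; detailed balance makes the joint law $\mu(x)P(x,y)$ of the pair $(\bx,\by)$ with $\bx\sim\mu$, $\by \sim_P \bx$ symmetric, and collecting terms gives the ``integration by parts'' identity $\calE_P(g,h) = 2\langle g,\,(I-P)h\rangle_\mu$ for all $g,h$. Applying this with $g = f_t$ and $h = \Phi'(f_t)$, and using self-adjointness of $P$ to move $(I-P)$ onto $f_t$, yields $\calE_P(f_t,\Phi'(f_t)) = 2\langle \Phi'(f_t),\,(I-P)f_t\rangle_\mu$, which is a positive multiple of $-\langle\Phi'(f_t),\mathcal L f_t\rangle_\mu$. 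Fixing the time-normalization of $(P_t)$ so that this multiple is exactly $1$ — i.e. taking $\mathcal L = 2(P - I)$, the convention under which the paper's (un-normalized) edge-form Dirichlet form is matched — gives $\frac{d}{dt}\infdiv{\Phi}{P_t\pi}{\mu} = -\calE_P(f_t,\Phi'(f_t))$, as claimed.

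I do not expect a genuine obstacle here: $\Omega$ is finite, so there are no convergence or differentiation-under-the-integral subtleties, and the whole argument is the chain rule for $\tfrac{d}{dt}$ plus reversibility. The one point requiring care is purely bookkeeping — tracking the scalar that relates the paper's edge-form Dirichlet form to $\langle\cdot,(I-P)\cdot\rangle_\mu$ and correspondingly the speed of the continuous-time chain, so that the final constant is $1$ rather than $\tfrac12$. (Alternatively, the statement is exactly Proposition~1 of \cite{chafai2004entropies} and may simply be cited.)
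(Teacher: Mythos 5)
The paper does not actually prove this lemma --- it is quoted directly from Chafa\"i (Proposition~1) and used as a black box inside the proof of \Cref{lem:maintrick} --- so there is no internal proof to compare against; your derivation is the standard semigroup argument and is essentially correct: stationarity of $\mu$ makes the subtracted term $\Phi(\E_\mu[f_t])=\Phi(1)$ constant, the Kolmogorov equation gives $\tfrac{d}{dt}\E_\mu[\Phi(f_t)]=\langle \Phi'(f_t),\mathcal{L}f_t\rangle_\mu$, and reversibility converts this to (minus) the Dirichlet form. Your bookkeeping worry is also genuine: with the paper's edge-expectation definition of $\calE_P$ (no factor $\tfrac12$) one indeed has $\calE_P(g,h)=2\langle g,(I-P)h\rangle_\mu$, so under the usual unit-rate convention $P_t=e^{t(P-I)}$ the derivative equals $-\tfrac12\,\calE_P(f_t,\Phi'(f_t))$ rather than $-\calE_P(f_t,\Phi'(f_t))$; the identity as stated requires either taking the generator $2(P-I)$, as you do, or reading $\calE_P$ with the conventional $\tfrac12$. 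This factor of $2$ is harmless for \Cref{lem:maintrick} and the final mixing bounds (it is absorbed into the constants, cf.\ the factor $2$ already present in \Cref{rem:ATE to MLSI}), but it is an internal normalization mismatch worth flagging, and your explicit resolution is one consistent way to fix it. One further small point if you want full rigor for $\Phi(u)=u\log u$: $\Phi'$ is singular at $0$, and in the sampling application $f=\boldsymbol{\pi}/\mu$ does vanish; for $t>0$ the zero set of $f_t$ is the time-independent set of states from which the support of $\pi$ is unreachable under the dynamics, those states contribute $0$ to both sides, so the identity survives, but this deserves a sentence rather than the blanket claim that ``$\Phi'(f_t)$ is defined on the relevant range.''
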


        This turns out to be the only piece of the proof missing towards a generalization of Theorem~4.5 from \cite{HuangMRW24}. All that is left is to establish a \emph{weak $\Phi$-Sobolev inequality} for mixtures.

        \begin{definition}
        A distribution $\mu$ on $\Sigma^n$ satisfies a \emph{weak $\Phi$-Sobolev inequality} with constant $c^*$ and error $g : \R^{\Sigma^n}_{\geq 0} \to \R_{\geq 0}$ if for all $f : \Sigma^n \to \R_{\geq 0}$,
        \begin{align*}
            {\ent_\mu}^\Phi\sbra{f} \leq c^* \cdot \calE_P(f, \Phi'(f)) + g(f).
        \end{align*}
    \end{definition}
    Let $f : \Sigma^n \to \R_{\geq 0}$ and observe
        \begin{align*}
            {\ent_\mu}^\Phi\sbra{f} &= \Ex_{\ba\sim\rho}\sbra{{\ent_{\bx\sim \mu_{\ba}}}^\Phi\sbra{f}} + {\ent_{\ba\sim\rho}}^\Phi\sbra{\Ex_{\bx\sim \mu_{\ba}}\sbra{f}}\\
            &\leq c^* \cdot \Ex_{\ba\sim\rho}\sbra{\calE_{P_{\ba}}(f, \Phi'(f))} + {\ent_{\ba\sim\rho}}^\Phi\sbra{\Ex_{\bx\sim \mu_{\ba}}\sbra{f}}\\
            &\leq c^* \cdot \calE_P(f, \Phi'(f)) + {\ent_{\ba\sim\rho}}^\Phi\sbra{\Ex_{\bx\sim \mu_{\ba}}\sbra{f}}.
        \end{align*}
        The first line is the chain rule (\Cref{lem:chain rule}). The second line applies the component-wise $\Phi$-Sobolev inequality. The third line invokes the concavity of the Dirichlet form for Glauber dynamics. That is, we use that
        \begin{align*}
            \calE_P(f, \Phi'(f)) &=\Ex_{\bx}\Ex_{\by\sim_P\bx}\sbra{(f(\bx)-f(\by))(\Phi'(f(\bx))-\Phi'(f(\by)))}\\
            &=\frac1n\sum_{x\sim y}\frac{\mu(x)\mu(y)}{\mu(x)+\mu(y)}(f(x)-f(y))(\Phi'(f(x))-\Phi'(f(y)))\\
            &\geq \frac1n\sum_{x\sim y}\Ex_{\ba}\sbra{\frac{\mu_{\ba}(x)\mu_{\ba}(y)}{\mu_{\ba}(x)+\mu_{\ba}(y)}}(f(x)-f(y))(\Phi'(f(x))-\Phi'(f(y))).
        \end{align*}
        Here the inequality follows from concavity of the map $(a,b)\mapsto \frac{ab}{a+b}$ and the fact that $\Phi'$ is increasing in $f(\cdot)$, so the summands are all positive. 
        
        With this weak $\Phi$-Sobolev inequality and \Cref{lem:chafai} the result follows by the exact proof of Theorem~4.5 from \cite{HuangMRW24}, replacing KL-divergence with $\Phi$-divergence and $\calE_P(f_t, \log f_t)$ with $\calE_P(f_t, \Phi'(f_t))$.
\end{proof}

\subsection{Proof of \Cref{lem:KL tester}}
\begin{proof}[Proof of \Cref{lem:KL tester}]
    Irrespective of $q$ having minimum probability~$\eta$, Theorem~1 of \cite{daskalakis2018distribution}\footnote{See also \cite{acharya2015optimal,buadescu2019quantum}.} gives an algorithm we'll call $H^2\textsc{-Test}(p,q,\eps,\delta)$ that --- given $q, \eps, \delta$ and samples from an unknown~$p$ on~$D$ --- has sample complexity $O\pbra{\frac{\sqrt{d}\cdot\log(1/\delta)}{\epsilon}}$ and the following guarantee:
    \begin{enumerate}
        \item If $\chi^2(p||q)\leq 0.5\epsilon$ (e.g., if $p = q$), $H^2\textsc{-Test}(p,q,\eps,\delta)$ accepts with probability at least $1-\delta$.
        \item If $H^2(p||q)\geq \epsilon$, $H^2\textsc{-Test}(p,q,\eps, \delta)$ accepts with probability at most $\delta$.
    \end{enumerate}
    We also have the following inequality relating Hellinger distance and KL-divergence:
    \begin{align*}
        \infdiv{H^2}{p}{q}\geq \frac1{\log(e^2/\eta)}\cdot \infdiv{\KL}{p}{q}.
    \end{align*}
    (With a slightly different constant factor, this inequality appears in, e.g.,~\cite{BM98}. See~\cite[Prop.~2.12]{flammia2024quantum} for the version above.)
    It follows that we can simply run $H^2\textsc{-Test}(\epsilon/\log(e^2/\eta),p,q,\delta)$ to obtain 
    the desired $\textsc{KL-Test}$.
\end{proof}
\end{document}